\newtheorem{theorem}{Theorem}[section]
\newtheorem{definition}[theorem]{Definition}
\newtheorem{proposition}[theorem]{Proposition}
\newtheorem{lemma}[theorem]{Lemma}
\newtheorem{example}[theorem]{Example}
\newtheorem{remark}[theorem]{Remark}
\newcommand{\Aut}{\operatorname{Aut}}
\title{Twists of non-hyperelliptic curves of genus $3$}
\author{Elisa Lorenzo Garc\'ia}
\let\@wraptoccontribs\wraptoccontribs
\thanks{}
\subjclass[2010]{11G99, 14H10, 14H45, 14H50}
\keywords{}
\begin{document}


\begin{abstract} In this paper we explicitly compute equations for the twists of all the smooth plane quartic curves defined over a number field $k$. Since the plane quartic curves are non-hyperelliptic curves of genus $3$ we can apply the method developed in \cite{Lor14}. The starting point is a classification due to Henn of the plane quartic curves with non-trivial automorphism group up to $\mathbb{C}-$isomorphism. 
\end{abstract}


\maketitle

\section{Introduction}\label{Intro}

The twists of curves of genus less or equal than $2$ over number fields are well-known.
While the genus $0$ and $1$ cases date back from a long time ago, see \cite{Sil}, the genus $2$
case is due to the work of Cardona and Quer \cite{Cart}, \cite{Carp}. All the 
genus $0$, $1$ or $2$ curves are hyperelliptic (at least
in the sense that they are not non-hyperelliptic, since genus $0$ and $1$ curves are not usually called
hyperelliptic). However, for genus greater than $2$, 
the generic curve is non-hyperelliptic. We devote
the present paper to explicitly compute the twists of the genus $3$ non-hyperelliptic curves defined over a number field $k$.

Since the image of the canonical morphism of a non-hyperelliptic curve of genus $3$ is a degree $2g-2=4$ curve into $\mathbb{P}^{2}$, it is a plane quartic curve. Moreover, if the non-hyperelliptic genus $3$ curve is defined over $k$, then we can take a $k-$isomorphic plane quartic curve defined over $k$ since its canonical model is also defined over $k$. Conversely, all non-singular plane quartic curves are non-hyperelliptic genus $3$ curves. So, from now on, we will indistinctly speak about non-singular plane quartic curves or smooth non-hyperelliptic genus $3$ curves.

Since there is a one to one correspondence between the set of twists of a smooth curve and the first cohomology set
$$
\operatorname{Twist}_k(C)\leftrightarrow\operatorname{H}^{1}_{et}(\text{Gal}(\bar{k}/k),\text{Aut}(C)),
$$
see \cite{Lor14}, \cite{Sil},  if a curve has trivial automorphism group, then the set of twists is also trivial. Therefore, in order to compute twists, we can restrict our attention to plane quartic curves with non-trivial automorphism group. Henn Classification provides a classification up to $\mathbb{C}$-isomorphism of such curves.

To compute the twists, we use the method described in \cite{Lor14} and sometimes, when the twists are easy to compute, we use weaker, but more convenient results.

In a forthcoming paper, see \cite{FLS}, this classification of twists will be useful for computing new Sato-Tate groups and distributions coming from twists of the Fermat and Klein quartic.

\subsection{Outline} The structure of this paper is as follows. In Section \ref{clasif}, we present Henn classification of plane quartic curves with non-trivial automorphism group together with slightly modifications in order to exhibit what we called representative families of curves for the $12$ cases that appear. We recall the method developed in \cite{Lor14} in Section \ref{lemmas}, and we also present some other results that will facilitate the computation of the twists. Computing the twists of the Fermat quartic and the Klein quartic are the two more difficult cases. We devote Section \ref{SecFermat} to compute the twists of the Fermat quartic by using the method in \cite{Lor14}. In Section \ref{Otroscasos}, we use the knowledge of its twists plus the results in Section \ref{lemmas} to compute the twists of the quartics in the first $10$ cases of Henn classification. Finally, in Section \ref{SecKlein}, we enumerate the twists of the Klein quartic.

We include as an appendix some tables with generators of the automorphism groups of the different families in Henn classification and some intermediate data needed to compute the twists. 

\subsection{Notation}
We now fix some notation and conventions that will be valid through the paper. For any field $F$, we denote by $\bar{F}$ an algebraic closure of $F$, and by $G_F$ the absolute Galois group $\operatorname{Gal}(\bar{F}/F)$. We recurrently consider the action of $G_F$ on several sets, and this action will be in general denoted by left exponentiation. For a field $F$, let $\operatorname{GL}_n(F)$ (resp. $\operatorname{PGL}_n(F)$) denotes the ring of $n$ by $n$ invertible matrices with coefficients in F (resp. that are projective). 

By $k$ we always mean a number field. All field extensions of $k$ are considered to be contained in a fixed algebraic closure $\bar{k}$. We write $\zeta_n$ for a primitive $n-$th root of unity in $\bar{k}$. We denote by $\mathcal{O}_k$ the ring of integers of $k$. We will usually denote elements in $k$ by Latin letters $a,b,c,...,m,n,...$ and elements in $\bar{k}$ by Greek letters $\alpha,\beta,\gamma,...$

All the curves $C/k$ are considered to be projective, smooth and geometrically irreducible. We will denote by $\operatorname{Aut}(C)$ the automorphisms group of $C$ over $\bar{k}$, and by $K/k$ the minimal extension such that all the automorphism of $C$ can be defined over. By $\Omega^1(C)$, we denote its $k-$vector space of regular differentials.

When we work with groups we usually use the SmallGroup Library-GAP notation, \cite{GAP}. Where the group $\operatorname{GAP}(N,r)$ or $<N,r>$ denotes the group of order $N$ that appears in the $r-$th position in such library. Given a group $G$, $\text{ID}(G)$ means its GAP notation. Cyclic groups, symmetric groups and dihedral groups with $n$, $n!$, $2n$ elements are written as $\text{C}_n$, $S_n$, $D_n$. By $V_4$, we mean the Klein group isomorphic to two copies of $\text{C}_2$.

\subsection{Aknowledgments} The author would like to thank Joan-Carles Lario for bringing this problem to her attention and Francesc Fit\'e for carefully reading the manuscript, double checking the computations and useful comments and suggestions.

\section{Henn classification of plane quartic curves with non-trivial automorphism group}\label{clasif}

Let $C_1$ be a projective, non-singular, irreducible curve, and $C_2\in\operatorname{Twist}_k(C_1)$. We have the equality of sets $\operatorname{Twist}_k(C_1)=\operatorname{Twist}_k(C_2)$. Hence, for our purposes, it is enough to compute $\operatorname{Twist}_k(C)$ for $C$ a representative for each class of non-hyperelliptic genus $3$ curves defined over $k$ up to $\bar{k}$-isomorphism. Moreover, as we already mentioned, if $\text{Aut}(C_1)$ is trivial, the set $\operatorname{Twist}_k(C_1)$ is also trivial. Henn classification, see \cite{Hen}, \cite{Ver}, classifies plane quartic curves over $\mathbb{C}$ with non-trivial automorphism groups up to $\mathbb{C}$-isomorphism. There are $12$ possibilities for these automorphism groups. Henn classification shows $12$ different families that parametrize all such cases.

Unfortunately, the stratification of the coarse moduli space of curves of genus $3$, $\text{M}_3$, provided by these families, is not good enough in the sense that they do not represent each geometric point of the moduli space over a non algebraically closed field. In other words, the problem is that given a plane quartic curve $C$ with non-trivial automorphism group and defined over a number field $k$, its representative in Henn classification is not necessarily defined also over $k$. In \cite{LRRS} and \cite{Tesis}, variations of Henn classification are given in order to get families with this property.
In \cite[Section 2]{Tesis}, it is explained how to compute, given a non-hyperelliptic curve of genus $3$, a representative in the modified Henn classification.

\subsection{Henn classification}
\label{21}

In $1976$ Henn gives the next classification up to $\mathbb{C}-$ isomorphism of the non-singular plane quartic curves with non-trivial automorphism group:

$ $
\begin{center}
	\begin{tabular}{|c|c|c|c|}
		\hline 
		Case & Model & $\operatorname{Aut}\left(C\right)$ & Parameter restrictions \tabularnewline 
		\hline 
		I & $x^{4}+x^{2}F_1\left(y,z\right)+F_2\left(y,z\right)=0$ & $\text{C}_{2}$ &$F_1\left(y,z\right)\neq0$,\,not\,\,below\tabularnewline
		\hline 
		II & $x^{4}+y^{4}+z^{4}+ax^{2}y^{2}+by^{2}z^{2}+cz^{2}x^{2}=0$ & $V_{4}$&$a\neq\pm b\neq c\neq\pm a$\tabularnewline
		\hline 
		III & $z^{3}y+x\left(x-y\right)\left(x-ay\right)\left(x-by\right)=0$ & $\text{C}_{3}$&not\,\,below\tabularnewline
		\hline 
		IV & $x^{3}z+y^{3}z+x^{2}y^{2}+axyz^{2}+bz^{4}=0$ & $S_{3}$&$a\neq b$ and $ab\neq0$\tabularnewline
		\hline 
		V & $x^{4}+y^{4}+z^{4}+ax^{2}y^{2}+bxyz^{2}=0$ & $D_{4}$&$b\neq0,\:\pm\frac{2a}{\sqrt{1-a}}$\tabularnewline
		\hline 
		VI & $z^{3}y+x^{4}+ax^{2}y^{2}+y^{4}=0$ & $\text{C}_{6}$& $a\neq0$\tabularnewline
		\hline 
		VII & $x^{4}+y^{4}+z^{4}+ax^{2}y^{2}=0$ & $\operatorname{GAP}\left(16,13\right)$&$\pm a\neq0,\,2,\,6,\,2\sqrt{-3}$\tabularnewline
		\hline 
		VIII & $x^{4}+y^{4}+z^{4}+a\left(x^{2}y^{2}+y^{2}z^{2}+z^{2}x^{2}\right)=0$ & $S_{4}$&$a\neq0,\,\frac{-1\pm\sqrt{-7}}{2}$\tabularnewline
		\hline 
		IX & $x^{4}+xy^{3}+yz^{3}=0$ & $\text{C}_{9}$&-\tabularnewline
		\hline 
		X & $x^{4}+y^{4}+xz^{3}=0$ & $\operatorname{GAP}\left(48,33\right)$&-\tabularnewline
		\hline 
		XI & $x^{4}+y^{4}+z^{4}=0$ & $\operatorname{GAP}\left(96,64\right)$&-\tabularnewline
		\hline 
		XII & $x^{3}y+y^{3}z+z^{3}x=0$ & $\operatorname{PSL}_2\left(\mathbb{F}_7 \right)$&-\tabularnewline
		\hline 
	\end{tabular}
\end{center}

$ $

$ $

Where ``not below'' means not $\mathbb{C}-$isomorphic to any model below. \footnote{We show an example of a plane quartic curve defined over $\mathbb{Q}$ such that its representative in Henn classification is not defined over $\mathbb{Q}$: the quartic curve $5x^4+y^4+z^4+x^2y^2=0$ in case VII has as representative the curve with parameter $a=1/\sqrt{5}$. Moreover, notice that the representative does not have to be unique, in the former case we can also take $a=-1/\sqrt{5}$.}

In table \ref{aut} in the Appendix, generators of each of these automorphism groups are given.

We summarize all the previous results in the following proposition. Moreover, in table \ref{autM} in the Appendix, generators of each of these automorphism groups are given.

\begin{proposition}[\cite{Tesis},\cite{LRRS}] The following families make Henn classification a representative classification of plane quartic curves in the sense that it represents each geometric point of the moduli space over non algebraically closed fields. 
	$ $
	
	\begin{table}[H]
		\label{MHenn}
		\begin{center}
			\begin{tabular}{|c|c|c|c|}
				\hline 
				Case & Model & $\operatorname{Aut}\left(C\right)$ & Parameter restrictions \tabularnewline 
				\hline 
				II & $F(x+\alpha y+\alpha^2z,\,x+\beta y+\beta^2z,\,x+\gamma y+\gamma^2z)=0$  &$V_4$&$\alpha\neq \beta\neq \gamma\neq \alpha$\tabularnewline
				\hline 
				\multicolumn{4}{|c|}{where $F(X,Y,Z)=\alpha X^4+\beta Y^4+\gamma Z^4+X^2Y^2+Y^2Z^2+Z^2X^2$ and} \tabularnewline
				\multicolumn{4}{|c|}{$\alpha,\beta,\gamma$ are the three roots of a degree $3$ polynomial with coefficients in $k$}\tabularnewline
				\hline
				III & $z^{3}y+P(x,y)=0$ & $C_{3}$&not below\tabularnewline
				\hline 
				V & $ax^{4}+y^{4}+z^{4}+bx^{2}y^{2}+xyz^{2}=0$ & $D_{4}$&$b\neq0,\,a\neq4b^2(2b+1)^2$\tabularnewline
				\hline 
				VI & $z^{3}y+ax^{4}+x^{2}y^{2}+y^{4}=0$ & $C_{6}$& -\tabularnewline
				\hline 
				VII & $ax^{4}+y^{4}+z^{4}+x^{2}y^{2}=0$ & $\operatorname{GAP}\left(16,13\right)$&$\pm a\neq1/4,\,1/36,\,1/-12$\tabularnewline
				\hline 
			\end{tabular}
		\end{center}
	\end{table}
	
\end{proposition}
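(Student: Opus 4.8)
The plan is to establish the statement by Galois descent, treating each row separately: I would show that every smooth plane quartic $C/k$ whose geometric automorphism group is the prescribed one is $k$-isomorphic to a member of the corresponding modified family that is defined over $k$. The common mechanism is as follows. Fix the canonical embedding $C\hookrightarrow\mathbb{P}^2$, on which $\operatorname{Aut}(C)$ acts by projective linear transformations. Since $C$ is defined over $k$, the group $G_k$ acts on $\operatorname{Aut}(C)$ and therefore permutes the finite collection of distinguished linear data attached to it: the isolated fixed points and fixed lines of its nontrivial elements and, more generally, the isotypic components of the action. This collection is $G_k$-stable, hence defined over $k$ (a $G_k$-fixed point or line of $\mathbb{P}^2$ is automatically $k$-rational, and a permuted orbit is $k$-rational as a subscheme). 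Choosing homogeneous coordinates adapted to it produces a model of $C$ over $k$ close to Henn's normal form, and the proposition amounts to checking that the residual normalization of coefficients can then be carried out over $k$ — which is exactly what the reallocation of free parameters in the modified families secures.

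I would first dispose of cases III, V, VI and VII, which share the feature that $\operatorname{Aut}(C)$ contains a characteristic cyclic subgroup whose eigenspace decomposition singles out a $G_k$-stable point $p$ and a complementary $G_k$-stable line $\ell$. As these are $k$-rational, placing $p$ at a coordinate vertex and $\ell$ at the opposite coordinate line makes the generator act diagonally, and the invariant quartic acquires the shape displayed in the table with all coefficients in $k$. The remaining freedom is a coordinate rescaling, and here the modification is decisive: whereas Henn's original models normalize the pure fourth-power terms to $1$, forcing the extraction of fourth roots (as in the footnote example $5x^4+y^4+z^4+x^2y^2=0$, whose Henn representative needs $a=1/\sqrt{5}$), the modified models instead normalize a coupling coefficient such as that of $x^2y^2$ or $xyz^2$ to $1$. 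This uses only ratios of coefficients already lying in $k$, so no extension is forced and the surviving parameters $a,b$ remain in $k$. Verifying that the stated parameter restrictions still cut out exactly the locus with the prescribed automorphism group is then a direct, if tedious, computation.

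The genuinely different and hardest case is II, where $\operatorname{Aut}(C)=V_4$ and $G_k$ may permute the three nontrivial involutions, hence the three coordinate vertices, nontrivially; when it does, these three points are Galois-conjugate and cannot be separated over $k$, so no diagonal model exists over $k$. The remedy is to parametrize the orbit by the roots $\alpha,\beta,\gamma$ of a cubic with coefficients in $k$ and to apply the Vandermonde (regular-representation) substitution $X=x+\alpha y+\alpha^2 z$, $Y=x+\beta y+\beta^2 z$, $Z=x+\gamma y+\gamma^2 z$. The crucial verification is that the displayed form $F(X,Y,Z)=\alpha X^4+\beta Y^4+\gamma Z^4+X^2Y^2+Y^2Z^2+Z^2X^2$ is $G_k$-invariant: any $\sigma\in G_k$ permutes $(\alpha,\beta,\gamma)$ and, because $x,y,z\in k$, permutes $(X,Y,Z)$ compatibly, so that $\sigma$ fixes both $\alpha X^4+\beta Y^4+\gamma Z^4$ and the symmetric term $X^2Y^2+Y^2Z^2+Z^2X^2$; hence $F$ has coefficients in $k$ and realizes the given $\bar k$-class over $k$. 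I expect this descent to be the main obstacle — matching the $G_k$-equivariance of the substitution with the attachment of the coefficients $\alpha,\beta,\gamma$, and confirming conversely that every case-II curve over $k$ arises from some $k$-rational cubic — while the other rows reduce to coefficient bookkeeping once the adapted $k$-rational coordinates are fixed.
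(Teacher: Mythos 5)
The paper does not actually prove this proposition; it is imported from \cite{Tesis} and \cite{LRRS}, so there is no internal proof to compare against. Judged on its own terms, your proposal has a genuine error at the very first step: you set out to show that every plane quartic $C/k$ with the prescribed geometric automorphism group is \emph{$k$-isomorphic} to a member of the modified family. That is strictly false, and the rest of this paper is the counterexample: the non-diagonal twists computed in Section \ref{Otroscasos} (e.g.\ for cases V and VII, the twists with parameter $m$ a non-square, of the form $2bx^4+8cmx^3y+\cdots$) have the prescribed automorphism group and are defined over $k$, but by the stated equivalence criterion they are not $k$-isomorphic to any diagonal member $a'x^4+y^4+z^4+x^2y^2=0$ of the family. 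What the proposition asserts is only that each \emph{geometric} point of the stratum having a representative over $k$ is hit by a family member with parameters in $k$, i.e.\ $\bar k$-isomorphism, not $k$-isomorphism. Your descent machinery is aimed at the wrong (and unprovable) target; the correct statement requires instead showing that a suitable invariant of the $\bar k$-normal form (e.g.\ $\tilde a^2$ for case VII, where $C\cong_{\bar k}x^4+y^4+z^4+\tilde a x^2y^2$) is fixed by the normalization ambiguity and hence lies in $k$, so that resetting the parameter produces a $k$-rational member in the same geometric class.

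Two further points. First, for cases V and VII your claim that the eigenspace decomposition of the characteristic cyclic subgroup singles out $k$-rational coordinate vertices is too quick: Galois can swap the two eigenlines of $\operatorname{diag}(i,-i,1)$ (this is exactly why Table \ref{autM} exhibits the off-diagonal generator with entries $\sqrt[4]{a}$), so ``the generator acts diagonally over $k$'' does not follow from $G_k$-stability of the eigenspace \emph{set}. Second, essentially all of the substantive content --- that the coefficient normalization can be performed over $k$, that in case II the diagonal coefficients can be arranged to equal the same roots $\alpha,\beta,\gamma$ that define the Vandermonde substitution (a priori they are only a Galois-compatible triple in some cubic étale algebra), and that the listed parameter restrictions cut out exactly the locus with the stated automorphism group --- is deferred as ``tedious computation'' or ``bookkeeping.'' The one verification you do carry out, the $G_k$-invariance of $F(x+\alpha y+\alpha^2z,\dots)$ in case II, is correct but is the easy direction.
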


\section{The method for computing twists}\label{lemmas}

In this Section, we recall the method developed in \cite{Lor14} for computing twists of non-hyperelliptic curves. We also state some other weaker results that are sometimes more convenient to use for computing the twists in the easiest cases instead of the general method.

Let $C$ be a non-hyperelliptic curve of genus $3$ defined over a number field $k$. Let $K/k$ be the minimal extension over which all its automorphisms can be defined. The extension $K/k$ is finite and Galois, see \cite[Section $2$]{Lor14}, and $\operatorname{Gal}(K/k)$ acts naturally on $\operatorname{Aut}(C)$. We define the twisting group $\Gamma:=\operatorname{Aut}(C)\rtimes\operatorname{Gal}(K/k)$. Let us consider a twist $\phi:\,C'\rightarrow C$ and let us denote by $L$ its field of definition. Clearly, $L$ is a finite extension of $K$ and the extension $L/k$ is Galois. The isomorphism $\phi$ defines a cocycle in $\xi\in\operatorname{H}^{1}(G_k,\operatorname{Aut}(C))$ by  $\xi_{\sigma}=\phi\cdot^{\sigma}\phi^{-1}$, and it also defines a proper solution to the Galois embedding problem (see Theorem $2.2$ in \cite{Lor14})

\begin{align}
\xymatrix{
	&            &                   & G_k \ar@{-->>}_-{\Psi}[dl] \ar@{->>}[d]   &    \\
	1 \ar[r] &  \operatorname{Aut}(C)\,\ar@{^{(}->}[r]   & \Gamma \ar[r]_-{\pi}  & \operatorname{Gal}(K/k)  \ar[r] &  1
}. \label{gep}
\end{align}

Let us put $G:=\operatorname{Gal}(L/k)$ and $H:=\operatorname{Gal}(L/K)$.

\subsection{The General Method}

We recall here the main steps to compute the twists of a non-hyperelliptic genus $3$ curve according to the method developed in \cite{Lor14}.

\begin{itemize}
	\item Step 1:
	Compute the canonical model: since we work with genus $3$ curves, the canonical model is the plane model, so the one that appears in the modified Henn classification.
	
	\item Step 2:
	Compute the groups $(G,H)$, that is, the candidates to Galois groups of the extensions $L/k$ and $L/K$ where $L$ is the field of definition of a twist, and find the proper solutions to the corresponding Galois embedding problems.
	
	\item Step 3:
	Find equations for the twists.
\end{itemize}

\subsection{Some useful results}\label{usefullemmas}

Computing the twists of plane quartic curves is sometimes easier by other methods rather than by the general one explained before. For these particular cases, we use the remarks and lemmas below. 

\begin{remark} \label{meq} Two twists $\phi_i:\,C_i\rightarrow C$ of a plane quartic curve are equivalent if and only if, considering the isomorphisms $\phi_i$'s as elements in $\operatorname{GL}_{3}\left(\bar{k}\right)$ (via its action on $\Omega^1(C)$), there exists a matrix $M\in\operatorname{GL}_{3}\left(k\right)$ and an automorphism $\alpha\in\operatorname{Aut}(C)$ such that $\alpha\circ\phi_1=\phi_2\circ M$; In other words, they are equivalent if and only if, up to an automorphisms of $C$, the columns of $\phi_1$ are $k-$linear combinations of the columns of $\phi_2$. 
\end{remark}

We state now Lemma $1.3.3$ in \cite{Tesis} for $g=3$.

\begin{lemma}\label{forma} Let $C/k$ be a plane quartic curve, and $\xi\in\operatorname{H}^1(G_k,\operatorname{Aut}(C))$ a cocycle, with splitting field $L$. Assume that the elements in the image $\xi(\operatorname{Gal}(L/k))\subseteq \operatorname{Aut}(C)$, as matrices in $\operatorname{GL}_3(L)$, have the form of block matrices
	\begin{align}
	\left(\begin{array}{cc} A&0\\ 0&a\end{array}\right),\label{F}
	\end{align}
	where $A\in\operatorname{GL}_2(L)$ and $a\in L$. Then, there exists a basis of $\Omega^{1}_{L}(C)^{\operatorname{Gal}(L/k)}_{\xi}$ such that the isomorphism $\phi :\,C'\rightarrow C$ associated to it (see section \ref{clasif}) has the same form as block matrix in (\ref{F}). In the particular case, that all the elements in $\xi(\operatorname{Gal}(L/k))$ are diagonal matrices, we can take a basis of $\Omega^{1}_{L}(C)_{\xi}^{\operatorname{Gal}(L/k)}$ such that $\phi$ is also a diagonal matrix.
\end{lemma}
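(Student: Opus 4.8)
The plan is to unwind the correspondence from \cite{Lor14} between a basis of the twisted invariants $\Omega^1_L(C)^{\operatorname{Gal}(L/k)}_\xi$ and the isomorphism $\phi$, and then to exploit the block structure of $\xi$ by applying Hilbert's Theorem~90 one block at a time. First I would fix the $k$-basis of $\Omega^1(C)$ in which the $\xi_\sigma$ are written, identifying $\Omega^1_L(C)=\Omega^1(C)\otimes_k L$ with $L^3$. On $L^3$ the relevant twisted semilinear action is $\sigma\ast v=\xi_\sigma\cdot{}^\sigma v$, where ${}^\sigma v$ is the coordinatewise Galois action; the space of twisted invariants is $V:=\{v\in L^3:\sigma\ast v=v\ \text{for all}\ \sigma\}$, a $k$-vector space with $V\otimes_k L=L^3$, and a $k$-basis $v_1,v_2,v_3$ of $V$ produces $\phi$ as the matrix whose columns are the $v_i$ (this is the convention of Remark~\ref{meq}).

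The key observation is that both ingredients of the twisted action preserve the coordinate decomposition $L^3=W_1\oplus W_2$, where $W_1=L^2\oplus 0$ and $W_2=0\oplus L$. Indeed, $W_1$ and $W_2$ are defined over $k$, hence stable under the coordinatewise action ${}^\sigma(\cdot)$; and by the block-diagonal hypothesis each $\xi_\sigma$ maps $W_1$ into $W_1$ and $W_2$ into $W_2$. Therefore $\sigma\ast(\cdot)$ stabilises both summands, and comparing the unique decompositions of $v=w_1+w_2$ and of $\sigma\ast v$ shows that $V=(V\cap W_1)\oplus(V\cap W_2)$. Writing $\xi_\sigma=\operatorname{diag}(A_\sigma,a_\sigma)$, the cocycle relation $\xi_{\sigma\tau}=\xi_\sigma\,{}^\sigma\xi_\tau$ descends block-wise, so $\sigma\mapsto A_\sigma$ is a cocycle valued in $\operatorname{GL}_2(L)$ and $\sigma\mapsto a_\sigma$ one valued in $\operatorname{GL}_1(L)=L^\times$. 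By the generalised Hilbert~90, that is the triviality of $\operatorname{H}^1(\operatorname{Gal}(L/k),\operatorname{GL}_n(L))$, applied to each block, $V\cap W_1$ is $2$-dimensional over $k$ and $V\cap W_2$ is $1$-dimensional over $k$.

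To finish, I would pick a $k$-basis $v_1,v_2$ of $V\cap W_1\subseteq W_1$ and $v_3$ of $V\cap W_2\subseteq W_2$; by the decomposition above this is a $k$-basis of $V$. Since the first two vectors have vanishing last coordinate and the third has vanishing first two coordinates, the associated matrix $\phi$ has exactly the block shape (\ref{F}), with $A\in\operatorname{GL}_2(L)$ formed by $v_1,v_2$ and $a\in L$ given by $v_3$. The diagonal case is identical using the finer decomposition $L^3=\bigoplus_{i=1}^3 W_i$ into coordinate lines, each stable under $\sigma\ast(\cdot)$ because the $\xi_\sigma$ are now diagonal; choosing one twisted-invariant vector from each line yields a diagonal $\phi$.

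The main point to verify carefully is that the twisted action genuinely respects this $k$-rational decomposition and that the block components $A_\sigma$ and $a_\sigma$ are honest cocycles, so that Hilbert~90 applies separately to each block; once that is in place the rest is bookkeeping with the basis-to-isomorphism correspondence of \cite{Lor14}. I do not expect a serious obstacle here, as the whole argument is a descent computation that is linear in the chosen coordinates.
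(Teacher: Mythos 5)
Your argument is correct: the paper itself gives no proof of this lemma (it is quoted from Lemma 1.3.3 of \cite{Tesis}), and your Galois-descent argument --- observing that the block hypothesis makes the twisted semilinear action preserve the $k$-rational decomposition $L^3=W_1\oplus W_2$, that the blocks $A_\sigma$ and $a_\sigma$ are themselves cocycles, and applying $\operatorname{H}^1(\operatorname{Gal}(L/k),\operatorname{GL}_n(L))=1$ block by block to get invariant subspaces of the right $k$-dimensions --- is exactly the standard proof one would expect there. No gaps; the only point worth stating explicitly is that the $\operatorname{GL}_3$-valued map obtained from $\xi$ via the (Galois-equivariant) action of $\operatorname{Aut}(C)$ on $\Omega^1(C)$ is an honest cocycle, which you implicitly use and which holds because the canonical model is defined over $k$.
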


\begin{remark}\label{inclusion} Let $C_i/k$ for $i\in\{1,2\}$ be two plane quartic curves such that there exists an inclusion of automorphisms groups $\iota:\,\Aut(C_1)\rightarrow\Aut(C_2)$, compatible with the action of $G_k$, that is, such that $^\sigma(\iota(\alpha))=\iota(^{\sigma}\alpha)$ for all $\sigma\in G_k$ and all $\alpha\in\Aut(C_1)$. Then, there is a natural inclusion of the set of cocycles of the first Galois cohomology $\operatorname{Z}^{1}(G_k,\Aut(C_1))\hookrightarrow\operatorname{Z}^{1}(G_k,\Aut(C_2))$. The inclusion does not lift to an inclusion of cohomology sets because we quotient by more elements in the right hand size.
\end{remark}

We show how to use these results to compute twists of plane quartics curves in an example.

\begin{example}\label{examp}
	Let us consider the family VI of plane quartic curves in Henn classification with automorphism group isomorphic to the group $\operatorname{S}_3$.
	$$
	C_{a,b}:\,F_{a,b}(x,y,z)=x^3z+y^3z+x^2y^2+axyz^2+bz^4=0,
	$$
	where $a,b\in k$ are such that $a\neq b$ and $ab\neq 0$. The automorphism group is generated by the matrices:
	$$
	\left(\begin{array}{ccc}\zeta_3&0&0\\0&\zeta_{3}^{2}&0\\0&0&1\end{array}\right),\,\,\left(\begin{array}{ccc}0&1&0\\1&0&0\\0&0&1\end{array}\right).
	$$
	Hence, Lemma \ref{forma} implies that any twist is given by a block matrix as the one given in (\ref{F}). Since the equivalence of twists as in Remark \ref{meq}, we can assume that 
	$$
	\phi=\left(\begin{array}{ccc}\alpha&\alpha\beta&0\\ \gamma&\gamma\delta&0\\0&0&1\end{array}\right):\,C'_{a,b}\rightarrow C_{a,b},
	$$ 
	with $\alpha\beta\gamma\delta\neq 0$.
	So, an equation of the twist is given by
	$$
	C'_{a,b}:\,F_{a,b}(\alpha (x+\beta y),\gamma (x+\delta y),z)=0.
	$$
	which implies 
	\begin{align}
	\alpha\gamma(x+\beta y)(x+\delta y)\in k[x,y]\label{RR1}\\
	\alpha^3(x+\beta y)^3+\gamma^3(x+\delta y)^3\in k[x,y].\label{RR2}
	\end{align}
	
	We deduce from (\ref{RR1}), that $\beta+\delta,\,\beta\delta\in k$, so either $\beta,\,\delta\in k$, or $\beta,\,\delta$ are conjugate numbers in a quadratic extension $k(\sqrt{m})$, where $m\in k^*/k^{*2}$. In the first case, we can take an equivalent twist, see remark \ref{meq}, given by a diagonal matrix, and then we find the twist $\phi=\operatorname{diag}(\sqrt[3]{n},\sqrt[3]{n^2},1)$. In the second case, we can assume
	$$
	\phi = \left( \begin{array}{ccc} \alpha & \alpha \sqrt{m}& 0\\ \gamma & -\gamma\sqrt{m} &0\\0&0&1\end{array}\right).
	$$
	
	Hence, from (\ref{RR1}) and (\ref{RR2}), we get $\alpha\gamma, \alpha^3+\gamma^3,(\alpha^3-\gamma^3)\sqrt{m}\in k$.
	
	We conclude $\alpha=\sqrt[3]{a_1+a_2\sqrt{m}}$ and $\beta=\sqrt[3]{a_1-a_2\sqrt{m}}$ for some $a_1,\,a_2\in k$ such that $a_{1}^{2}-ma_{2}^{2}=q^3$ for some $q\in k$. An equation for such twist is
	$$
	C'_{a,b}:\,2a_1(x^3z+3mxy^2z)+2a_2m(3x^2yz+my^3z)+q^2(x^2-my^2)^2+aq(x^2-my^2)z^2+bz^4=0.
	$$
	Notice that for $m=1$, we recover the twists given by diagonal matrices.
	
	Now, by using Remark \ref{meq}, it is easy to check that two such twists are equivalent if and only if $m=m'$ and there exist $b_1,\,b_2$ such that
	$$
	a_1+a_2\sqrt{m}=
	(b_1+b_2\sqrt{m})^3(a'_1\pm a'_2\sqrt{m}).
	$$ 
\end{example}

\section{Twists of the Fermat quartic}\label{SecFermat}

We consider the Fermat quartic $C_{F}:\: x^{4}+y^{4}+z^{4}=0$ defined over a number field $k$.
The automorphism
group $\mathrm{Aut}\left(C_{F}\right)$ is isomorphic to $<96,64>$
in GAP notation \cite{GAP}, and as subgroup of $\operatorname{PGL}_{3}\left(\bar{k}\right)$
it is generated by the matrices:
\[
s=\left(\begin{array}{ccc}
0 & 0 & 1\\
1 & 0 & 0\\
0 & 1 & 0\end{array}\right),\,\quad t=\left(\begin{array}{ccc}
0 & -1 & 0\\
1 & 0 & 0\\
0 & 0 & 1\end{array}\right),\, \quad u=\left(\begin{array}{ccc}
i & 0 & 0\\
0 & 1 & 0\\
0 & 0 & 1\end{array}\right)\]

Let us firstly suppose that $i\notin k$, then $K=k\left(i\right)$,
$\mathrm{Gal}\left(K/k\right)\simeq\mathbb{Z}/2\mathbb{Z}$ and $\Gamma:=\mathrm{Aut}\left(C_F\right)\rtimes\mathrm{Gal}\left(K/k\right)\simeq$ $<192,956>$.
Then, it is easily checked with Magma \cite{magma}, see for instance the code \cite[Appendix]{Tesis}, that the possible
pairs $\left(G,H\right)$ in Step $2$ in the method in \cite{Lor14}, are the ones given in the tables \ref{GHdiagonal}, \ref{GHalmost} and \ref{GHnondiag} in the Appendix.
We have divided these pairs into three types: diagonal, almost
diagonal and non-diagonal. The diagonal type corresponds
to the cases in which all the elements in $ G \subseteq \operatorname{Aut}(C_F) \rtimes \operatorname{Gal}(K/k)$ have
a diagonal matrix as first component.
The second type will be called almost-diagonal and it
corresponds to the cases where $ G$ is a $2$-group not
included in the diagonal cases of the former type. Finally, the third type will cover the pairs $( G, H)$ of the remaining cases.

For the first two types of twists we will use the techniques and the results of Subsection \ref{usefullemmas} as in Subsection \ref{examp}.

\begin{theorem}[Diagonal twists]\label{D}  The diagonal twists have fields of definition $L=k( i, \sqrt[4]{a}, \sqrt[4]{b})$, and are defined by equations:
	$$
	ax^{4}+by^{4}+z^{4}=0.
	$$
	Two of these twists, $ax^4+by^4+z^4=0$ and $a'x^4+b'y^4+z^4=0$ are equivalent if and only if  there exists $m \in k$ such that the sets $\left\{ a,b,1 \right\}$ and $\left\{ ma', mb', m \right\}$ are congruent modulo $k^{*4}$.
\end{theorem}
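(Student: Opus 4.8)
The plan is to reduce everything to diagonal isomorphisms and then read off both the equations and the equivalence relation from a short monomial-matrix computation, exactly in the spirit of Example \ref{examp}.

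First I would invoke Lemma \ref{forma}: by definition a diagonal twist is one whose associated cocycle has image consisting of diagonal matrices, so the lemma allows me to take the isomorphism $\phi\colon C'\to C_F$ to be a diagonal matrix $\operatorname{diag}(\lambda,\mu,\nu)$ with $\lambda\mu\nu\neq0$. Scaling $\phi$ projectively I may assume $\nu=1$. The twist is then cut out by $(\lambda x)^4+(\mu y)^4+z^4=\lambda^4x^4+\mu^4y^4+z^4=0$, and the requirement that $C'$ be defined over $k$ forces $a:=\lambda^4$ and $b:=\mu^4$ to lie in $k^*$; this gives the equation $ax^4+by^4+z^4=0$ with $\lambda=\sqrt[4]{a},\ \mu=\sqrt[4]{b}$. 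Conversely every pair $a,b\in k^*$ manifestly produces such a diagonal twist, so these equations exhaust the family. For the field of definition I would argue that $\phi=\operatorname{diag}(\sqrt[4]{a},\sqrt[4]{b},1)$ is defined over $k(\sqrt[4]{a},\sqrt[4]{b})$, while the cocycle $\xi_\sigma=\phi\,{}^\sigma\phi^{-1}$ takes values among the diagonal automorphisms of $C_F$ whose entries are fourth roots of unity. Since the field of definition $L$ of the twist is Galois over $k$ and contains $K=k(i)$ (the solution $\Psi$ to the embedding problem \eqref{gep} covers $\operatorname{Gal}(K/k)$), and also contains $\sqrt[4]{a},\sqrt[4]{b}$, it equals the compositum $L=k(i,\sqrt[4]{a},\sqrt[4]{b})$.

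For the equivalence statement I would apply Remark \ref{meq}: the twists $\phi_1=\operatorname{diag}(\sqrt[4]{a},\sqrt[4]{b},1)$ and $\phi_2=\operatorname{diag}(\sqrt[4]{a'},\sqrt[4]{b'},1)$ are equivalent if and only if $\phi_2^{-1}\alpha\,\phi_1$ is, up to a scalar, a matrix in $\operatorname{GL}_3(k)$ for some $\alpha\in\operatorname{Aut}(C_F)$. Here I would use that $\operatorname{Aut}(C_F)\cong\langle 96,64\rangle$ is the group of monomial matrices with fourth-root-of-unity entries modulo scalars, writing $\alpha=P_\pi D$ with $P_\pi$ the permutation matrix of some $\pi\in S_3$ and $D=\operatorname{diag}(d_1,d_2,d_3)$, $d_j\in\mu_4$. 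A direct computation shows $\phi_2^{-1}\alpha\,\phi_1$ is the monomial matrix whose nonzero entries are $\gamma_j=d_j\,\alpha_j/\beta_{\pi(j)}$, where $\alpha_j,\beta_j$ denote the diagonal entries of $\phi_1,\phi_2$. This matrix is projectively defined over $k$ exactly when all ratios $\gamma_j/\gamma_\ell$ lie in $k$; taking fourth powers eliminates the $d_j$ and yields that $\alpha_j^4/\beta_{\pi(j)}^4$ is a constant $m\in k^*$ modulo $k^{*4}$, which upon letting $j$ run over the three coordinates is precisely the condition $\{a,b,1\}\equiv\{ma',mb',m\}\pmod{k^{*4}}$.

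The main obstacle I expect is the converse of this last step. From $(\gamma_j/\gamma_\ell)^4\in k^{*4}$ one only gets $\gamma_j/\gamma_\ell\in k\cdot\mu_4$, so I must clear the residual fourth-root-of-unity factors. The key is that the exponents $d_1,d_2,d_3\in\mu_4$ are free parameters of $\alpha$, and since $i\notin k$ one has $k\cap\mu_4=\{\pm1\}$; hence for each coordinate $d_j$ can be chosen to bring $\gamma_j/\gamma_3$ into $k$, and these choices are independent. I would finish by checking that they are compatible with the single projective normalization, so that no global obstruction survives; this yields the equivalence in both directions and completes the proof.
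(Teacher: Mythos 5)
Your argument is correct and is exactly the route the paper takes: it gives no separate proof of this theorem but points to Lemma \ref{forma} (to reduce to a diagonal $\phi=\operatorname{diag}(\sqrt[4]{a},\sqrt[4]{b},1)$) and Remark \ref{meq} together with the monomial structure of $\operatorname{Aut}(C_F)$ for the equivalence, which is what you do. The final ``compatibility'' check you defer is genuinely trivial, since the ratios $\gamma_j/\gamma_\ell$ are unaffected by the common scalar ambiguity of $\alpha$ in $\operatorname{PGL}_3$, so the three choices of $d_j\in\mu_4$ really are independent.
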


\begin{theorem}[Almost diagonal twists]\label{AD} The almost diagonal twists have fields of definition of the form $L=k( i, \sqrt[4]{a+b\sqrt{m}}, \sqrt[4]{a-b \sqrt{m}}, \sqrt{m})$, and are defined by equations:
	$$
	2ax^{4}+8bmx^{3}y+12max^{2}y^{2}+8bm^{2}xy^{3}+2am^{2}y^{4}+z^{4}=0.
	$$
	Two of these twists are equivalents if and only if their fields of definition have the same quadratic subextension $k(\sqrt{m})/k$ and the columns of the matrix associated to one isomorphism $\phi:\, C'_F\to C_F$ are $k-$rational linear combination of the columns of the matrix associated to the other isomorphism $\phi':\,C''_F\to C_F$, see Remark \ref{meq}. That is, if there exist $c,d \in k$ such that
	$$
	(a+b\sqrt{m})=(c+d\sqrt{m})^{4}(a'\pm b'\sqrt{m}).
	$$ 
\end{theorem}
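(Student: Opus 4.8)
The plan is to mimic the strategy already used for the diagonal twists (Theorem~\ref{D}) and in Example~\ref{examp}, but now for the almost-diagonal pairs $(G,H)$ in the appendix tables. Since these $G$ are $2$-groups whose elements have a diagonal matrix as first component but are not themselves all diagonal, the cocycle $\xi$ lands in a subgroup of $\Aut(C_F)$ whose matrices are block matrices of the form in~(\ref{F}), with the $2\times 2$ block $A$ acting on the $x,y$ coordinates and a scalar acting on $z$. First I would invoke Lemma~\ref{forma} to conclude that, after choosing a suitable basis of the $\xi$-twisted invariant differentials, the associated isomorphism $\phi:\,C'_F\to C_F$ may itself be taken in the same block form
\begin{align*}
\phi=\left(\begin{array}{cc}A&0\\0&1\end{array}\right),\quad A\in\GL_2(L).
\end{align*}
Thus an equation for the twist is obtained by substituting the columns of $A$ into $x^4+y^4$ (the $z^4$ term being unaffected) and demanding that the result be $k$-rational.

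Next I would parametrize $A$ using Remark~\ref{meq}, exactly as in the $S_3$ example: up to right multiplication by a matrix in $\GL_2(k)$ and up to an automorphism of $C_F$, the two columns of $A$ must be Galois-conjugate, so I can write the new coordinates as $\alpha(x+\sqrt{m}\,y)$ and $\alpha'(x-\sqrt{m}\,y)$ where $\alpha,\alpha'$ are conjugate over $k(\sqrt m)$ and $m\in k^*/k^{*2}$ records the quadratic subextension. Expanding $\alpha^4(x+\sqrt m\,y)^4+\alpha'^4(x-\sqrt m\,y)^4$ and imposing that all binomial coefficients combine into $k$-rational expressions forces $\alpha^4=a+b\sqrt m$ and $\alpha'^4=a-b\sqrt m$ for some $a,b\in k$; collecting the terms of the expansion by degree in $x,y$ then yields precisely the stated equation
\begin{align*}
2ax^{4}+8bmx^{3}y+12max^{2}y^{2}+8bm^{2}xy^{3}+2am^{2}y^{4}+z^{4}=0.
\end{align*}
This also identifies the field of definition as $L=k(i,\sqrt[4]{a+b\sqrt m},\sqrt[4]{a-b\sqrt m},\sqrt m)$, since these are exactly the entries appearing in $A$.

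For the equivalence criterion I would again appeal to Remark~\ref{meq}: two such twists $\phi,\phi'$ are equivalent precisely when, after composing with some $\gamma\in\Aut(C_F)$, the columns of one matrix are $k$-linear combinations of the columns of the other. The quadratic data must match first, so $k(\sqrt m)=k(\sqrt{m'})$; then the rationality of the linear combination relating $\alpha(x+\sqrt m\,y)$ to $\alpha'(x\pm\sqrt{m}\,y)$ translates into the existence of $c,d\in k$ with $(a+b\sqrt m)=(c+d\sqrt m)^4(a'\pm b'\sqrt m)$, where the sign accounts for the two ways of matching a column with its conjugate. The main obstacle I anticipate is not the algebra of the expansion, which is routine, but rather verifying two compatibility points: first, that the almost-diagonal pairs $(G,H)$ genuinely force the off-diagonal structure captured by a single parameter $m$ (rather than a larger family), so that the parametrization above is exhaustive; and second, tracking carefully which automorphisms $\gamma\in\Aut(C_F)$ are available in Remark~\ref{meq} at this stage, since it is precisely the residual action of the automorphism group that produces the $\pm$ ambiguity and the scaling by fourth powers $(c+d\sqrt m)^4$ in the final equivalence relation.
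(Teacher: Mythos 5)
Your proposal is correct and follows essentially the same route as the paper, which proves Theorems \ref{D} and \ref{AD} precisely by the technique of Example \ref{examp}: apply Lemma \ref{forma} to put $\phi$ in block form, use Remark \ref{meq} to reduce to conjugate columns over a quadratic extension $k(\sqrt{m})$, expand $\alpha^4(x+\sqrt{m}y)^4+\alpha'^4(x-\sqrt{m}y)^4+z^4$ with $\alpha^4=a+b\sqrt{m}$, and read off the equivalence from Remark \ref{meq}. Your expansion reproduces the stated equation exactly, so no gap remains.
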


For the non-diagonal twists, we use the method developed in \cite{Lor14}.

\begin{proposition}[Non-diagonal twists]\label{prop} The embedding problems corresponding to the nine pairs $(G,H)$ for non-diagonal twists have a solution. The corresponding fields of definition are $L=k(i, \sqrt[n]{\alpha},\sqrt[n]{\beta},\sqrt[n]{\gamma})$, where $\alpha$, $\beta$, $\gamma$ are the three roots of an irreducible polynomial of degree $3$ with coefficients in $k$ with $\sqrt[n]{\alpha\beta\gamma}\in k$. The different cases are:
	
	\begin{center}
		\begin{tabular}{|c|c|c|c|c|c|c|c|c|c|}
			\hline
			\multicolumn{10}{|c|}{Type III: Non-diagonal twists}\tabularnewline
			\hline
			\hline
			& $ \operatorname{ID}( G) $ \rule[0.4cm]{0cm}{0cm} & $ \operatorname{ID}(H)$ & $\triangle$ mod $k^{*2}$& $n$&
			& $ \operatorname{ID}( G) $ \rule[0.4cm]{0cm}{0cm} & $ \operatorname{ID}(H)$ & $\triangle$ mod $k^{*2}$& $n$\tabularnewline
			\hline
			$1$ & $<6,1>$ & $<3,1>$ & $-1$ & $1$&
			$6$ & $<48,48>$ & $<24,12>$ & $\neq\pm1$ & $2$\tabularnewline
			\hline
			$2$ & $<6,2>$ & $<3,1>$ & $1$ & $1$&
			$7$ & $<96,64>$ & $<48,3>$ & $-1$ & $4$\tabularnewline
			\hline
			$3$ & $<12,4>$ & $<6,1>$ & $\neq\pm1$ & $1$&
			$8$ & $<96,72>$ & $<48,3>$ & $1$ & $4$\tabularnewline
			\hline
			$4$ & $<24,12>$ & $<12,3>$ & $-1$ & $2$&
			$9$ & $<192,956>$ & $<96,64>$ & $\neq\pm1$ & $4$\tabularnewline
			\hline
			$5$ & $<24,13>$ & $<12,3>$ & $1$ & $2$&\multicolumn{5}{|c|}{}\tabularnewline
			\hline
		\end{tabular}
	\end{center}
	
	Here, $\triangle$ denotes the absolute discriminant of the extension $k(\alpha,\beta,\gamma)/k$.
\end{proposition}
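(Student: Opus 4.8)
The nine pairs $(G,H)$ are supplied by the Magma enumeration of Step $2$ (table \ref{GHnondiag} in the Appendix), so what remains is, for each pair, to \textbf{(a)} produce a proper solution $\Psi\colon G_k\to\Gamma$ of the embedding problem (\ref{gep}) whose image is $G$ and for which $\Psi^{-1}(\operatorname{Aut}(C_F))$ cuts out $K=k(i)$, and \textbf{(b)} read off the splitting field $L$. The plan is to realize all nine solutions through a single uniform construction governed by an irreducible cubic over $k$, and then to match the three admissible discriminant classes against the three target groups for each value of $n\in\{1,2,4\}$.

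The construction starts from an irreducible polynomial $f(T)=T^3-e_1T^2+e_2T-e_3\in k[T]$ with roots $\alpha,\beta,\gamma$. Scaling the roots by a suitable $c\in k^*$ (which multiplies $\alpha\beta\gamma=e_3$ by $c^3$) we may arrange $\alpha\beta\gamma\in(k^*)^n$, and then fix $\sqrt[n]{\alpha\beta\gamma}=\sqrt[n]\alpha\,\sqrt[n]\beta\,\sqrt[n]\gamma\in k$; we set $L=k(i,\sqrt[n]\alpha,\sqrt[n]\beta,\sqrt[n]\gamma)$. The isomorphism $\phi\colon C_F'\to C_F$ is taken to combine the Vandermonde array in $\alpha,\beta,\gamma$, which encodes the coordinate permutations $s,t$ generating the $S_3$-quotient of $\operatorname{Aut}(C_F)$, with the diagonal scaling by $\sqrt[n]\alpha,\sqrt[n]\beta,\sqrt[n]\gamma$, which encodes the $\mu_4$-torus generated by $u$; this is the direct analogue for the Fermat quartic of the family II normal form of the modified Henn classification. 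I then define $\Psi$ by letting $G_k$ act on the generators of $L$: a $\sigma\in G_k$ permutes $\{\alpha,\beta,\gamma\}$ (hence acts through the $S_3$ generated by $s,t$), multiplies each $\sqrt[n]{\bullet}$ by an $n$-th root of unity (hence through the torus generated by $u$), and acts on $i$ through $\operatorname{Gal}(K/k)$.

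The first verification is the cocycle condition: one checks that $\phi\cdot{}^{\sigma}\phi^{-1}$ lies in $\operatorname{Aut}(C_F)\subseteq\PGL_3(\bar k)$ for every $\sigma\in G_k$, the permutation part landing in the $S_3$ generated by $s,t$ and the scaling part in the torus generated by $u$. Here the normalization $\sqrt[n]{\alpha\beta\gamma}\in k$ is exactly what forces the scalar ambiguity of $\phi\cdot{}^{\sigma}\phi^{-1}$ to be $k$-rational, so that this element represents an honest automorphism in $\PGL_3(\bar k)$ rather than a mere $\GL_3$-class; this is the point at which the hypothesis on $\alpha\beta\gamma$ is used. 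The second verification is that the three discriminant classes produce the three listed groups for each fixed $n$: writing $\triangle$ for the class modulo $k^{*2}$ of the discriminant of $f$, the quadratic resolvent of $f$ is $k(\sqrt{\triangle})$, and the position of $i=\sqrt{-1}$ relative to this field is decisive. If $\triangle\equiv-1$ then $k(\sqrt{\triangle})=k(i)=K$ sits inside the $S_3$-splitting field; if $\triangle\equiv1$ then $f$ is cyclic and $i$ is independent; and if $\triangle\neq\pm1$ then $f$ is an $S_3$-cubic whose quadratic subfield is disjoint from $K$. For $n=1$ these three alternatives give respectively $\operatorname{Gal}(L/k)\cong S_3,\,C_6,\,D_6$, matching $\langle6,1\rangle,\langle6,2\rangle,\langle12,4\rangle$, and for $n=2,4$ they give the corresponding groups of larger order obtained by the further Kummer factor of size $n^2$, matching the remaining table entries.

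The step I expect to be the main obstacle is properness together with the precise determination of the isomorphism type $\operatorname{ID}(G)$: I must show that $\operatorname{Gal}(L/k)$ is genuinely all of $G$ and not a proper subgroup, which amounts to a Kummer-independence statement guaranteeing that $k(\sqrt[n]\alpha,\sqrt[n]\beta,\sqrt[n]\gamma)$ has the expected degree $n^2$ over the splitting field of $f$ (the exponent is $n^2$ rather than $n^3$ precisely because of the relation $\sqrt[n]\alpha\,\sqrt[n]\beta\,\sqrt[n]\gamma\in k$) and is linearly disjoint from $k(i)$ in the relevant cases. Once $\operatorname{Gal}(L/k)\cong G$ is confirmed, the identification $\operatorname{Gal}(L/k)\xrightarrow{\sim}G\subseteq\Gamma$ defined above is by construction a proper solution of (\ref{gep}); evaluating $F\circ\phi$ and using the symmetry of $\alpha,\beta,\gamma$ together with $\sqrt[n]{\alpha\beta\gamma}\in k$ then yields a $k$-rational quartic, the equation of the twist $C_F'$. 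The abstract group identifications can be cross-checked with Magma \cite{magma} exactly as in Step $2$ of the method in \cite{Lor14}.
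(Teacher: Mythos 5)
Your argument runs in the opposite direction from the paper's, and in doing so it omits the half of the statement that the paper's proof actually establishes. The paper fixes a proper solution with splitting field $L$ for a given pair $(G,H)$ and \emph{deduces} that $L$ must have the form $k(i,\sqrt[n]{\alpha},\sqrt[n]{\beta},\sqrt[n]{\gamma})$ with $\sqrt[n]{\alpha\beta\gamma}\in k$: for $n=2$ it identifies $L$ with $k_f(i)$ for an irreducible quartic $f$ with $S_4$ (resp.\ $A_4$) splitting field and produces $\alpha,\beta,\gamma$ as the roots of the cubic resolvent via the half-sums $s_1,s_2,s_3$ of the roots of $f$ (whence $\sqrt{\alpha\beta\gamma}=s_1s_2s_3\in k$); for $n=4$ it analyses the fixed fields of $\langle u\rtimes 1\rangle$ and $\langle t^3ut\rtimes 1\rangle$ and invokes Proposition $4.1$ of \cite{Lor14} to write them as $M(i,\sqrt[4]{\alpha})$ and $M(i,\sqrt[4]{\beta})$ with $\beta$ conjugate to $\alpha$, then checks $\sqrt[4]{\alpha\beta\gamma}\in k$ by inspecting the Galois action. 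You instead start from a cubic with prescribed discriminant class and build one solution. Even granting that construction, you have not shown that \emph{every} field of definition of a non-diagonal twist has the stated Kummer form, and that converse is what the proposition asserts ("the corresponding fields of definition are\dots") and what is needed later, in Theorem \ref{ND} and in the parametrization of twists by $Pol_3^4(k)/\sim$.

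There is also a concrete gap inside your existence direction at precisely the point you defer: the claim that $k(\sqrt[n]{\alpha},\sqrt[n]{\beta},\sqrt[n]{\gamma})$ has degree $n^2$ over $k(\alpha,\beta,\gamma)$ and is linearly disjoint from $k(i)$ in the relevant cases is false for an arbitrary admissible cubic (e.g.\ if $\alpha/\beta$ is already an $n$-th power in the splitting field the degree drops), so your recipe does not always yield a proper solution with $\operatorname{Gal}(L/k)\cong G$, and you give no argument that cubics realizing the full group exist; the paper's backward direction gets properness for free because it starts from a proper solution. A smaller point: the hypothesis $\sqrt[n]{\alpha\beta\gamma}\in k$ is not what makes $\phi\cdot{}^{\sigma}\phi^{-1}$ land in $\operatorname{Aut}(C_F)\subseteq\PGL_3(\bar k)$ --- all diagonal matrices of fourth roots of unity already lie in $\langle 96,64\rangle$ modulo scalars --- its actual role is to cut the Kummer degree from $n^3$ to $n^2$ so that $\operatorname{Gal}(L/k)$ has the order of $G$. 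Your $n=1$ discriminant-to-group matching ($\triangle\equiv-1$, $1$, $\neq\pm1$ giving $\langle 6,1\rangle$, $\langle 6,2\rangle$, $\langle 12,4\rangle$) and your choice of the Vandermonde-type $\phi$ are correct and agree with the paper, but the two issues above need to be repaired before this can replace the paper's proof.
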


\noindent{\it Proof.}  The solutions associated to the first three pairs are well-known, see for example \cite{Gam}.

For the sixth pair, since $\operatorname{Gal}(L/K)$ is isomorphic to $S_{4}$ and $\operatorname{Gal}(L/k)$ is isomorphic to $S_{4}\times\mathbb{Z}_{2}$, we conclude that $L=k_{f}(i)$ where $k_{f}$ is the splitting field of an irreducible monic degree $4$ polynomial $f(x)=x^{4}+a_{2}x^{2}+a_{1}x+a_{0}\in k\left[x\right]$, such that the splitting field of its cubic resolvent $g(x)=x^{3}+2a_{2}x^{2}+(a_{2}^{2}-4a_{0})x-a_{1}^{2} $ has Galois group isomorphic to $S_{3}$. Let $r_{0}$, $r_{1}$, $r_{2}$ and $r_{3}$ be the four roots of $f$, and let us define
$$
s_{1}=\frac{1}{2}(r_{0}-r_{1}+r_{2}-r_{3}),\,\,\,
s_{2}=\frac{1}{2}(r_{0}+r_{1}-r_{2}-r_{3}),\,\,\,s_{3}=\frac{1}{2}(r_{0}-r_{1}-r_{2}+r_{3}).
$$
Then the roots of $g(x)=0$ are $s_{1}^{2}$, $s_{2}^{2}$ and $s_{3}^{2}$ and $g(x^{2})=(x^{2}-s_{1}^{2})(x^{2}-s_{2}^{2})(x^{2}-s_{3}^{2})$ is also irreducible over $k$ and $L=k(s_{1},s_{2},s_{3})$. We set $\alpha=s_{1}^{2}$, $\beta=s_{2}^{2}$ and $\gamma=s_{3}^{2}$, then $L=k(\sqrt{\alpha}, \sqrt{\beta}, \sqrt{\gamma})$ with $\sqrt[]{\alpha\beta\gamma}\in k$. 

Similar arguments yield the solutions to the Galois embedding problems for the pairs $4$ and $5$.

For the ninth case one considers the Galois extension $M=L^{G_0}$ over $k$ given by the normal subgroup $G_0=\left\langle u\rtimes1,t^{3}ut\rtimes1,1\rtimes\tau\right\rangle\lhd\text{Gal}(L/k)$. It has Galois group isomorphic to $S_{3}$ and its quadratic subextension is different from $k(i)$. Consider now the Galois extension $M_{1}=L^{G_1}$ given by the subgroup $G_1=\left\langle u\rtimes1\right\rangle$. Then $\operatorname{Gal}(M_{1}/M(i))\simeq\mathbb{Z}_4$ and $\operatorname{Gal}(M_{1}/M)\simeq D_{4}$. Hence, by applying proposition $4.1$ in \cite{Lor14}, $M_{1}=M(i,\sqrt[4]{\alpha})$ with $\alpha\in M$. Since $M_1/k(\alpha )$ is a normal extension we conclude $\operatorname{Gal}(k(\alpha)/k)\simeq \mathbb{Z}_3$.  Idem with $M_{2}$ given by the subgroup $G_2\left\langle t^{3}ut\rtimes1\right\rangle$, we get $M_{2}=L^{G_2}=M(i,\sqrt[4]{\beta})$ with $\beta\in M$ and $\operatorname{Gal} (k(\beta)/k)\simeq \mathbb{Z}_3$. Also $k(\alpha)\neq k(\beta)$. Since $G_1\cap G_2=\{1\}$, we have $L=M_{1}M_{2}$. Finally, since $L/k$ is a normal extension and there is no other normal extension over $k(\beta)$ having Galois group isomorphic to $D_4\times\mathbb{Z}_2$, we can take $\beta$ to be a conjugate of $\alpha$. Let $\gamma$ be the third conjugated. If we inspect the action of $\operatorname{Gal}(L/k)$ on $\sqrt[4]{\alpha\beta\gamma}$ we obtain $\sqrt[4]{\alpha\beta\gamma} \in k$, and then the result follows. 

The solutions to the Galois embedding problems coming from pairs $7$ and $8$ follow using the same arguments.
\hfill $\square$

\begin{remark}\label{R3} In the first six cases two of these twists are equivalents if and only if they have the same splitting field. In the cases seventh and ninth, the same field $L$ provides two different twists, because in that case, see formula $4.2$ in \cite{Lor14},
	$$
	n_{(G,H)}:=\mathrm{Aut}_{H}\left(G\right)/Inn_{G}\left(\mathrm{Aut}\left(C_F\right)\rtimes\left\{ 1\right\} \right)=2.
	$$
	In the eighth one, each field $L$ provides four different twists.
	
\end{remark}

\begin{theorem}[Non-diagonal twists]\label{ND} A non-diagonal twist with field of definition $L=k(i, \sqrt[n]{\alpha},\sqrt[n]{\beta},\sqrt[n]{\gamma})$ is defined by the equation:
	$$
	\sum_{\begin{array}{c}j+k+l=4\\j,k,l\geq0\end{array}}\binom{4}{j}\binom{4-j}{k}S_{e_n+k+2l}x^{j}y^{k}z^{l}=0,
	$$
	where $S_{j}=\alpha^{j}+\beta^{j}+\gamma^{j}$, and
	$
	e_n=\small{\begin{cases} 0\text{ if }n=1 \\
		2\text{ if }n=2\\
		1\text{ if }n=4\end{cases}}.
	$
	
	If $n=4$ this splitting field produces more than one twist. For the seventh and ninth cases we have also the twist coming from replacing $\alpha,\beta,\gamma$ with $\alpha^3,\beta^3,\gamma^3$ and for the eighth one, we also have the twists coming from replacing $\alpha,\beta,\gamma$ with $\alpha/\beta,\beta/\gamma,\gamma/\alpha$ and $\alpha^3/\beta^3,\beta^3/\gamma^3,\gamma^3/\alpha^3$.
\end{theorem}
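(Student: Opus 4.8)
\section*{Proof proposal}

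The plan is to carry out Step 3 of the method of \cite{Lor14}, taking as input the solutions to the embedding problems already produced in Proposition \ref{prop}. Recall that once one exhibits a matrix $\phi\in\operatorname{GL}_3(L)$ realizing the relevant cocycle---that is, one for which $\phi\cdot{}^{\sigma}\phi^{-1}\in\operatorname{Aut}(C_F)$ for every $\sigma\in\operatorname{Gal}(L/k)$---the corresponding twist is obtained by pulling back the Fermat equation, exactly as in Example \ref{examp}: if the rows of $\phi$ are the linear forms $L_1,L_2,L_3$, then $C'_F$ is cut out by $L_1^4+L_2^4+L_3^4=0$. So the whole statement reduces to writing down the correct $\phi$ and expanding this pullback.

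First I would propose the explicit matrix
\[
\phi=\operatorname{diag}\bigl(\alpha^{e_n/4},\beta^{e_n/4},\gamma^{e_n/4}\bigr)\cdot
\begin{pmatrix}1&\alpha&\alpha^2\\ 1&\beta&\beta^2\\ 1&\gamma&\gamma^2\end{pmatrix},
\]
whose $(i,j)$ entry is $\alpha_i^{\,e_n/4+(j-1)}$. Because $n\,e_n/4\in\{0,1\}$ for each of the three admissible values of $n$, the scalars $\alpha_i^{e_n/4}=\sqrt[n]{\alpha_i}^{\,n e_n/4}$ lie in $L$, so $\phi\in\operatorname{GL}_3(L)$; it is invertible, being a nonzero diagonal scaling of the Vandermonde matrix on the distinct roots $\alpha,\beta,\gamma$. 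The verification that $\phi$ realizes the cocycle attached to the pair $(G,H)$ in Proposition \ref{prop} is where the real work lies, and I would check it on generators of $\operatorname{Gal}(L/k)$. A $\sigma$ permuting $\{\alpha,\beta,\gamma\}$ permutes the rows of the Vandermonde part, so $\phi\cdot{}^{\sigma}\phi^{-1}$ is the corresponding coordinate-permutation matrix, an element of $\operatorname{Aut}(C_F)$ in the class of $s$ (this is what makes the twist non-diagonal); a $\sigma$ with $\sigma(\sqrt[n]{\alpha_i})=\zeta_n\sqrt[n]{\alpha_i}$ scales the single $i$-th row by $\zeta_n^{\,n e_n/4}$ (using $\zeta_n^{\,n}=1$, so that all entries of that row pick up the \emph{same} root of unity) and produces a diagonal element in the class of $u^{\pm1}$; and the action on $i$ is absorbed in the same way. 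Here the hypothesis $\sqrt[n]{\alpha\beta\gamma}\in k$ of Proposition \ref{prop} is precisely what forces these cocycle values back into $\operatorname{Aut}(C_F)$ and pins down the pair $(G,H)$.

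With $\phi$ in hand the computation is routine: the $i$-th row gives $L_i=\alpha_i^{e_n/4}(x+\alpha_i y+\alpha_i^2 z)$, hence $L_i^4=\alpha_i^{e_n}(x+\alpha_i y+\alpha_i^2 z)^4$, and the multinomial theorem yields
\[
L_1^4+L_2^4+L_3^4=\sum_{\substack{j+k+l=4\\ j,k,l\geq0}}\binom{4}{j}\binom{4-j}{k}\bigl(\alpha^{e_n+k+2l}+\beta^{e_n+k+2l}+\gamma^{e_n+k+2l}\bigr)x^{j}y^{k}z^{l},
\]
which is the asserted equation after recognizing the inner sum as $S_{e_n+k+2l}$.

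Finally, for $n=4$ I would account for the extra twists. By Remark \ref{R3} the same field $L$ carries $n_{(G,H)}=2$ inequivalent twists in the seventh and ninth cases and $4$ in the eighth, the ambiguity being measured by the integer $n_{(G,H)}$ of that remark. At the level of $\phi$ these correspond to post-composing the cocycle with the relevant outer automorphism, which does not enlarge $L$ since $\sqrt[4]{\alpha^3}=(\sqrt[4]{\alpha})^3$ generates the same Kummer extension of $k(i)$ as $\sqrt[4]{\alpha}$ (because $\gcd(3,4)=1$). Tracking this through the pullback, replacing the triple $(\alpha,\beta,\gamma)$ by $(\alpha^3,\beta^3,\gamma^3)$ produces the second twist in cases seven and nine, and in case eight the additional outer automorphisms of the larger group $<96,72>$ yield the further twists coming from $(\alpha/\beta,\beta/\gamma,\gamma/\alpha)$ and its cube. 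I expect the main obstacle throughout to be exactly this bookkeeping of cocycle classes---matching $\phi$ to the precise $(G,H)$ and enumerating the $n=4$ multiplicities---rather than the pullback computation itself.
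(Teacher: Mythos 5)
Your proposal is correct and follows essentially the same route as the paper: your matrix $\operatorname{diag}(\alpha^{e_n/4},\beta^{e_n/4},\gamma^{e_n/4})\cdot V$ is exactly the paper's $\phi$ with $i$-th row $\sqrt[n]{\alpha_i}\,(1,\alpha_i,\alpha_i^2)$ for $n=1,2,4$, the equation comes from the same multinomial expansion of $L_1^4+L_2^4+L_3^4$, and the extra twists for $n=4$ are handled via Remark \ref{R3} just as in the paper. The only difference is that you spell out the cocycle verification on generators of $\operatorname{Gal}(L/k)$, which the paper leaves implicit.
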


\noindent{\it Proof.} For $n=1,2,4$ we can respectively take the isomorphism $\phi\colon C'_F\to C_F$:
$$
\phi = \left(\begin{array}{ccc}1 & \alpha & \alpha^{2}\\1 & \beta & \beta^{2}\\1 & \gamma & \gamma^{2}\end{array}\right),\,\,\,\phi = \left(\begin{array}{ccc}\sqrt{\alpha} &\alpha\sqrt{\alpha}  &\alpha^{2}\sqrt{\alpha}\\ \sqrt{\beta} & \beta\sqrt{\beta} &\beta^{2} \sqrt{\beta}\\ \sqrt{\gamma} & \gamma\sqrt{\gamma} & \gamma^{2}\sqrt{\gamma}\end{array}\right),\,\,\,
\phi = \left(\begin{array}{ccc}\sqrt[4]{\alpha} &\alpha\sqrt[4]{\alpha}  &\alpha^{2}\sqrt[4]{\alpha}\\ \sqrt[4]{\beta} & \beta\sqrt[4]{\beta} &\beta^{2} \sqrt[4]{\beta}\\ \sqrt[4]{\gamma} & \gamma\sqrt[4]{\gamma} & \gamma^{2}\sqrt[4]{\gamma}\end{array}\right),
$$
and the ones coming from replacing $a,b,c$ with the numbers in the statement of the theorem. The equivalence of twists is a consequence of Remark \ref{R3}.
\hfill $\square$

\begin{remark}Now, if $k=k(i)$, we define $k_0=k\cap \mathbb{R}$, then $\left[k:k_0\right]=2$ and $i\notin k_0$. We obtain for $k$ the pairs $(H,H)$ where $(G,H)$ is pair for $k_0$.  Then we obtain once again the statements of Theorems \ref{D}, \ref{AD} and \ref{ND}.
\end{remark}

\begin{definition}\label{pol} We define the sets $Pol_3^n(k)$ to be the sets separable polynomials of degree $3$ with coefficients in $k$ and whose independent coefficient is in $-1\cdot k^{*n}$. We define the equivalence relation given by: $P(T)\sim P'(T)$ if and only if they have the same splitting field $M$ and their roots satisfy
	$$
	\text{Roots}(P(T))\equiv\text{Roots}(P'(T)) \text{mod} \:  M^{*4}.
	$$
\end{definition}

We can summarize all these results in the following theorem.

\begin{theorem} The set of $k$-isomorphism classes of non-hyperelliptic genus $3$ curves with automorphism group isomorphic to $\langle 96,64 \rangle$ is parametrized by the set $Pol_3^4(k)/\sim$. A representative plane quartic corresponding to  $P(T)=(T-\alpha)(T-\beta)(T-\gamma)$ is given by:
	$$
	\sum_{\begin{array}{c}i+j+k=4\\i,j,k\geq0\end{array}}\binom{4}{i}\binom{4-i}{j}S_{1+j+2k}x^{i}y^{j}z^{k}=0
	$$
	where $S_j=\alpha^j+\beta^j+\gamma^{j}$.
\end{theorem}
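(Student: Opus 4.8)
The plan is to recognize the statement as a clean repackaging of Theorems \ref{D}, \ref{AD} and \ref{ND}, and to prove it by exhibiting an explicit bijection $Pol_3^4(k)/\!\sim\ \longrightarrow \operatorname{Twist}_k(C_F)$. First I would reduce to the Fermat quartic: by Henn's classification (Section \ref{clasif}), case XI carries no parameter and is the unique family whose automorphism group is $\langle 96,64\rangle$, and the parameter restrictions in the other cases prevent any special member from acquiring this group; hence a non-hyperelliptic genus $3$ curve $C/k$ has $\operatorname{Aut}(C)\cong\langle 96,64\rangle$ if and only if $C$ is $\bar k$-isomorphic to $C_F$, i.e. $C\in\operatorname{Twist}_k(C_F)$. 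So the set to be parametrized is exactly $\operatorname{Twist}_k(C_F)$.

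Next I would set up the map $P=(T-\alpha)(T-\beta)(T-\gamma)\mapsto C_P$, where $C_P$ is the curve of the displayed equation. The key observation is that this equation is obtained by substituting $(X,Y,Z)=\phi_P(x,y,z)$ into $X^4+Y^4+Z^4$, where $\phi_P=\operatorname{diag}(\sqrt[4]{\alpha},\sqrt[4]{\beta},\sqrt[4]{\gamma})\cdot V$ is the $n=4$ isomorphism of Theorem \ref{ND} and $V$ is the Vandermonde matrix in $\alpha,\beta,\gamma$: expanding $\alpha(x+\alpha y+\alpha^2z)^4+\beta(\cdots)^4+\gamma(\cdots)^4$ by the multinomial theorem produces the coefficient $\binom{4}{i}\binom{4-i}{j}S_{1+j+2k}$ of $x^iy^jz^k$. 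Since the $S_j$ are power sums of the roots they lie in $k$, so $C_P$ is defined over $k$, and $\phi_P\in\operatorname{GL}_3(\bar k)$ exhibits $C_P$ as a genuine twist of $C_F$; the requirement $\alpha\beta\gamma\in k^{*4}$ built into $Pol_3^4(k)$ gives $\sqrt[4]{\alpha\beta\gamma}\in k$, matching the normalization of Proposition \ref{prop}.

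Then I would prove the map is a bijection. The splitting type of $P$ matches the three families of twists: if $P$ splits over $k$ then $V\in\operatorname{GL}_3(k)$ and, by Remark \ref{meq}, $C_P$ is $k$-isomorphic to the diagonal twist $\alpha x^4+\beta y^4+\gamma z^4=0$ of Theorem \ref{D}; an irreducible quadratic factor lands $C_P$ in the almost-diagonal family of Theorem \ref{AD}; and an irreducible cubic lands it among the non-diagonal twists of Proposition \ref{prop}. In every case the twist depends on $\alpha,\beta,\gamma$ only through their classes in $M^*/M^{*4}$, where $M$ is the splitting field: indeed, if a root is a fourth power in $M$ then its fourth root already lies in $M(i)$, so the field of definition $L=k(i,\sqrt[4]{\alpha},\sqrt[4]{\beta},\sqrt[4]{\gamma})$ collapses and the ``effective'' exponent $n$ of Proposition \ref{prop} drops accordingly. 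For surjectivity I would use the substitution $(\alpha,\beta,\gamma)\mapsto(\lambda\alpha,\lambda\beta,\lambda\gamma)$ with $\lambda\in k^*$, which alters $C_P$ only by the $k$-rational map $\operatorname{diag}(1,\lambda,\lambda^2)$ and so yields a $k$-isomorphic curve; since $\gcd(3,4)=1$, cubing is a bijection of $k^*/k^{*4}$, so $\lambda$ can always be chosen to force $\alpha\beta\gamma\in k^{*4}$, placing every twist in the image of the map.

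The hard part will be matching the three equivalence criteria with the single relation $\sim$. I would show that, under the normalization $\alpha\beta\gamma\in k^{*4}$, the diagonal criterion of Theorem \ref{D} forces its scaling parameter $m$ to be a fourth power (take products: $m^3\in k^{*4}$, hence $m\in k^{*4}$), reducing it to ``roots congruent modulo $k^{*4}=M^{*4}$''; that the almost-diagonal relation $(c+d\sqrt m)^4$ of Theorem \ref{AD} is congruence modulo $M^{*4}$ for $M=k(\sqrt m)$; and that for the non-diagonal twists the phenomenon of Remark \ref{R3}, where one field $L$ yields several twists, corresponds exactly to the distinct classes modulo $M^{*4}$ obtained by cubing the roots (resp. passing to $\alpha/\beta,\ldots$): since in those cases a root has order $4$ in $M^*/M^{*4}$, one checks $\alpha^2\notin M^{*4}$ while $\alpha^8\in M^{*4}$, so cubing has orbits of size $2$, reproducing $n_{(G,H)}=2$, and similarly the four classes for the eighth case. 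I expect this last bookkeeping for pairs $7$, $8$ and $9$ to be the main obstacle, as it is where the uniform formula must faithfully separate twists sharing a common field of definition.
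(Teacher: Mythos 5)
Your proposal follows the same route as the paper: reduce to twists of the Fermat quartic and attach to each $P\in Pol_3^4(k)$ with roots $\alpha,\beta,\gamma$ the twist given by $\phi_P=\operatorname{diag}(\sqrt[4]{\alpha},\sqrt[4]{\beta},\sqrt[4]{\gamma})\cdot V$ with $V$ the Vandermonde matrix, whose expansion yields the stated equation. The paper's own proof essentially stops at this construction (asserting the rest is clear), so the extra verification you sketch --- matching the relation $\sim$ against the equivalence criteria of Theorems \ref{D}, \ref{AD} and \ref{ND} and the multiplicities of Remark \ref{R3} --- is a correct filling-in of details the paper leaves implicit rather than a different approach.
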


\noindent{\it Proof.}
The only non-hyperelliptic genus $3$ curve up to $\mathbb{C}-$isomorphism with automorphism group isomorphic to $\langle 96,64 \rangle$ is the Fermat quartic, see section \ref{21}. Then we only have to parametrize its twists. Then, it is clear that given such a polynomial $P(T)\in Pol_3^4(k)$ with roots $\alpha,\beta,\gamma$ we can attach to it the twist given by the isomorphism
$$
\phi = \left(\begin{array}{ccc}\sqrt[4]{\alpha} &\alpha\sqrt[4]{\alpha}  &\alpha^{2}\sqrt[4]{\alpha}\\ \sqrt[4]{\beta} & \beta\sqrt[4]{\beta} &\beta^{2} \sqrt[4]{\beta}\\ \sqrt[4]{\gamma} & \gamma\sqrt[4]{\gamma} & \gamma^{2}\sqrt[4]{\gamma}\end{array}\right):\,C'\rightarrow C_F
$$
that gives the equation in the statement of the theorem.

\section{Twists of the quartics in cases from I to X}\label{Otroscasos}

We use the results explained in Section \ref{lemmas}. When no more explanations are given, the proof of the propositions is just a straightforward implication from Remark \ref{meq} and Lemma \ref{forma}.

\begin{proposition} Let $C$ be a non-hyperelliptic genus $3$ curve defined over a number field $k$ and with automorphism group isomorphic to $\text{C}_2$. Then, it is isomorphic to a plane quartic defined over $k$ with equation
	$$
	x^4+x^2F_1(y,z)+F_2(y,z)=0.
	$$
	All its twists are in one to one correspondence with the set $k^{*}/k^{*2}$. Given $m\in k^{*}/k^{*2}$ one has the twist
	$$
	m^2x^4+mx^2F_1(y,z)+F_2(y,z)=0.
	$$
\end{proposition}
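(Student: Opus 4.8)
The plan is to exploit that the unique nontrivial automorphism is already $k$-rational, which collapses the entire computation to ordinary Kummer theory. First I would identify the generator of $\operatorname{Aut}(C)\cong\text{C}_2$ on the model $x^4+x^2F_1(y,z)+F_2(y,z)=0$: since the equation is even in $x$, the map $x\mapsto -x$, $y\mapsto y$, $z\mapsto z$ is an automorphism of order $2$, and as $\operatorname{Aut}(C)$ has a single nontrivial element it must generate it. Viewed as an element of $\operatorname{GL}_3(\bar k)$ through its action on $\Omega^1(C)$ it is the diagonal matrix $\sigma=\operatorname{diag}(-1,1,1)$, whose entries lie in $k$. Hence $K=k$, the group $G_k$ acts trivially on $\operatorname{Aut}(C)$, and the Henn model (which needs no modification in this case, unlike cases II, III, V, VI, VII) is already defined over $k$, giving the first assertion of the statement.

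Because the action is trivial, $\operatorname{H}^1(G_k,\operatorname{Aut}(C))=\operatorname{H}^1(G_k,\mathbb{Z}/2\mathbb{Z})=\operatorname{Hom}(G_k,\mathbb{Z}/2\mathbb{Z})$, which by Kummer theory is canonically identified with $k^*/k^{*2}$. Combined with the correspondence $\operatorname{Twist}_k(C)\leftrightarrow\operatorname{H}^1(G_k,\operatorname{Aut}(C))$ recalled in the introduction, this already establishes the asserted bijection at the level of sets, and in particular shows that distinct classes $m\in k^*/k^{*2}$ yield inequivalent twists.

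Next I would produce the explicit equations. Since $\sigma$ is diagonal, Lemma \ref{forma} guarantees that every twist is represented by a diagonal isomorphism $\phi=\operatorname{diag}(\lambda,\mu,\nu)$. The cocycle relation $\xi_\tau=\phi\cdot{}^\tau\phi^{-1}=\operatorname{diag}(\lambda/{}^\tau\lambda,\,\mu/{}^\tau\mu,\,\nu/{}^\tau\nu)$ must take values in $\{I,\sigma\}$, which forces $\mu,\nu\in k^*$ and $\lambda^2\in k^*$. Using the equivalence of Remark \ref{meq} (right multiplication by a matrix in $\operatorname{GL}_3(k)$, together with rescaling of columns) I can normalize to $\mu=\nu=1$ and $\lambda=\sqrt{m}$ with $m\in k^*$, its class modulo $k^{*2}$ recording the cohomology class; indeed $\xi_\tau=\operatorname{diag}(\sqrt m/{}^\tau\!\sqrt m,1,1)$ equals $I$ or $\sigma$ according as $\tau$ fixes $k(\sqrt m)$ or not, so $\xi$ is exactly the homomorphism cutting out $k(\sqrt m)$. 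Writing $\phi\colon C'\to C$, $(x,y,z)\mapsto(\sqrt m\,x,y,z)$ and imposing that the image lie on $C$ gives $(\sqrt m\,x)^4+(\sqrt m\,x)^2F_1(y,z)+F_2(y,z)=0$, i.e. $m^2x^4+mx^2F_1(y,z)+F_2(y,z)=0$, whose coefficients lie in $k$ as required.

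There is essentially no serious obstacle: the entire content is the observation that $\sigma$ is $k$-rational, so the twist set is purely Kummer-theoretic. The only point requiring a little care is verifying that the normalization of $\phi$ applies the equivalence of Remark \ref{meq} correctly—so that $\mu,\nu$ may be discarded and $\lambda$ scaled to a genuine square root $\sqrt m$—and that the cocycle attached to $\phi=\operatorname{diag}(\sqrt m,1,1)$ is precisely the class of $m$ in $k^*/k^{*2}$; both verifications are routine.
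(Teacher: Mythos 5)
Your proof is correct and follows essentially the same route as the paper, which states that this proposition is ``a straightforward implication from Remark \ref{meq} and Lemma \ref{forma}''; you have simply written out that implication in full (rational involution $\operatorname{diag}(-1,1,1)$, hence trivial Galois action, Kummer identification of $\operatorname{H}^1$ with $k^*/k^{*2}$, diagonal normalization of $\phi$, and substitution to get the equation). No gaps.
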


\begin{proposition} Let $\mathcal{C}$ be a non-hyperelliptic genus $3$ curve defined over a number field $k$ such that $\operatorname{Aut}(\mathcal{C})\simeq V_4$. Then, there exist $\alpha,\beta,\gamma\in\bar{k}$, that are roots of a separable polynomial of degree $3$ with coefficients in $k$, and such that $\mathcal{C}$ is isomorphic to the model $C_{\alpha,\beta,\gamma}$ of the modified Henn classification. Depending on the isomorphism class of the group $\mathcal{G}:=\operatorname{Gal}(k(\alpha,\beta,\gamma)/k)$, we find the following twists:
	
	i) If $\mathcal{G}=\lbrace 1\rbrace$, the twists of $\mathcal{C}$ are in one-to-one correspondence with the pairs $(m,n)\in k^{*}/k^{*2}\times k^{*}/k^{*2}$. For a pair $(m,n)$, we have the twist of equation:
	$$
	\alpha m^2x^4+\beta n^2y^4+\gamma z^4+mnx^2y^2+ny^2z^2+mz^2x^2=0.
	$$
	ii) If $\mathcal{G}\simeq \text{C}_2$, then $k(\alpha,\beta,\gamma)=k(\sqrt{m})$, for some $m\in k^*/ k^{*2}$. Thus, up tu permutation, we get the conditions $\gamma \in k$, and $\alpha,\,\beta\in k(\sqrt{m})/k$ are conjugate numbers. Then, the twists of $\mathcal{C}$ are given by the isomorphisms
	$$
	\phi= \left(\begin{array}{ccc}\sqrt{c+d\sqrt{m}} & \sqrt{m}\sqrt{c+d\sqrt{m}}  &0\\ \sqrt{c-d\sqrt{m}} &-\sqrt{m}\sqrt{c-d\sqrt{m}} &0\\ 0 & 0&1 \end{array}\right):\,C'\rightarrow C_{\alpha,\beta,\gamma}.
	$$ 
	Two such isomorphism provide equivalent twists if and only if there exist $e,\, f\in k$ so that
	$$
	c+d\sqrt{m}=(e+f\sqrt{m})^2(c'+d'\sqrt{m}).
	$$
	iii) If $\mathcal{G}\simeq \text{C}_3$ or $S_3$, then the twists are given by isomorphisms
	$$
	\phi = \left(\begin{array}{ccc}\sqrt{\alpha_1} &\alpha_1\sqrt{\alpha_1}  &\alpha_1^{2}\sqrt{\alpha_1}\\ \sqrt{\alpha_2} & \alpha_2\sqrt{\alpha_2} &\alpha_2^{2} \sqrt{\alpha_2}\\ \sqrt{\alpha_3} & \alpha_3\sqrt{\alpha_3} & \alpha_3^{2}\sqrt{\alpha_3}\end{array}\right):\,C'\rightarrow C_{\alpha,\beta,\gamma},
	$$
	where $\alpha_1\in k(\alpha)$, $\alpha_2\in k(\beta)$ and $\alpha_3\in k(\gamma)$ are conjugate numbers in $k(\alpha,\beta,\gamma)$. Two such twists are equivalent if and only if $\alpha_1,\alpha_2,\alpha_3\equiv \alpha'_1,\alpha'_2,\alpha'_3\,\operatorname{mod}\,k^{*2}$
\end{proposition}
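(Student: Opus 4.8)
The plan is to pin down $\Aut(C_{\alpha,\beta,\gamma})\cong V_4$ together with its $G_k$-action and then read off the three cases. In the linear coordinates $X=x+\alpha y+\alpha^2z$, $Y=x+\beta y+\beta^2z$, $Z=x+\gamma y+\gamma^2z$ the model becomes $\alpha X^4+\beta Y^4+\gamma Z^4+X^2Y^2+Y^2Z^2+Z^2X^2=0$, so its automorphisms are exactly the three sign changes $\operatorname{diag}(-1,1,1)$, $\operatorname{diag}(1,-1,1)$, $\operatorname{diag}(1,1,-1)$ (modulo scalars). Since each $\sigma\in G_k$ permutes $\{\alpha,\beta,\gamma\}$ through its image in $\mathcal{G}\hookrightarrow S_3$, it permutes $\{X,Y,Z\}$ and hence the three sign changes in the same way; conjugating back to the coordinates $x,y,z$ (which are defined over $k$) shows that the $G_k$-action on $\Aut(C_{\alpha,\beta,\gamma})=V_4$ is precisely this permutation action. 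A twist is thus a class in $\operatorname{H}^1(G_k,V_4)$ for this action, and I will organize the computation by the isomorphism type of $\mathcal{G}$.

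For (i), $\mathcal{G}=\{1\}$ means $\alpha,\beta,\gamma\in k$, the action is trivial, $K=k$, and all automorphisms are already diagonal. By the diagonal case of Lemma \ref{forma} I may take $\phi=\operatorname{diag}(\sqrt{m},\sqrt{n},1)$; substituting $X\mapsto\sqrt{m}\,x$, $Y\mapsto\sqrt{n}\,y$, $Z\mapsto z$ into the $X,Y,Z$ model produces exactly the stated quartic, the twists are classified by $\operatorname{Hom}(G_k,V_4)=(k^{*}/k^{*2})^2$, and the equivalence is immediate from Remark \ref{meq}. For (ii), with $\mathcal{G}\cong \text{C}_2$ I may assume $\gamma\in k$, so that $Z$ and the sign change $\operatorname{diag}(1,1,-1)$ are $k$-rational while the remaining two coordinates (resp.\ sign changes) are interchanged by $\mathcal{G}$; every element of the cocycle image is then of the block shape $\left(\begin{smallmatrix}A&0\\0&a\end{smallmatrix}\right)$, so Lemma \ref{forma} forces $\phi$ to be block-diagonal. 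Parametrizing the $2\times 2$ block exactly as in Example \ref{examp}, but now with square roots (the automorphisms have order $2$ instead of $3$), yields the displayed $\phi$, and Remark \ref{meq} turns ``the columns are $k$-linear combinations'' into $c+d\sqrt{m}=(e+f\sqrt{m})^2(c'+d'\sqrt{m})$.

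The substantive case is (iii). Here $\mathcal{G}\cong \text{C}_3$ or $S_3$ acts transitively on $\{\alpha,\beta,\gamma\}$, no coordinate is singled out, and Lemma \ref{forma} no longer applies. The plan is to recognize $V_4$ as the quotient $\mathbb{F}_2^{3}/\langle(1,1,1)\rangle$ of the permutation module $\mathbb{F}_2^{3}=\operatorname{Ind}_{\mathcal{G}_\alpha}^{\mathcal{G}}\mathbb{F}_2$, where $\mathcal{G}_\alpha$ fixes the root $\alpha$; Shapiro's lemma identifies the governing cohomology $\operatorname{H}^1(G_k,\mathbb{F}_2^{3})$ with $\operatorname{H}^1(G_{k(\alpha)},\Z/2\Z)=k(\alpha)^{*}/k(\alpha)^{*2}$, so that a twist is parametrized by a single class $\alpha_1\in k(\alpha)^{*}/k(\alpha)^{*2}$ together with its conjugates $\alpha_2\in k(\beta)$, $\alpha_3\in k(\gamma)$. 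The corresponding solution of the embedding problem is the Vandermonde-times-square-root matrix $\phi$ in the statement, which is literally the shape of the $n=2$ non-diagonal Fermat twist of Theorem \ref{ND}; indeed the compatible inclusion of the sign-change $V_4$ into $\Aut(C_F)$ (Remark \ref{inclusion}) lets me transport that computation, and the verification that $\phi^{-1}(C_{\alpha,\beta,\gamma})$ is $k$-rational is the same power-sum identity $S_j=\alpha^{j}+\beta^{j}+\gamma^{j}\in k$ used there. Finally Remark \ref{meq} collapses the equivalence to $\alpha_1,\alpha_2,\alpha_3\equiv\alpha_1',\alpha_2',\alpha_3'\bmod k^{*2}$.

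I expect the main obstacle to be exactly this third case: showing that the parametrization is complete, i.e.\ that every cocycle untwists to a $\phi$ of the Vandermonde-square-root form (which amounts to solving the relevant Galois embedding problem), and pinning down the precise side condition on the triple $(\alpha_1,\alpha_2,\alpha_3)$ --- a ``product is a square'' constraint analogous to $\sqrt[n]{\alpha\beta\gamma}\in k$ in Theorem \ref{ND}, possibly dissolved upon passing to $\PGL_3$ --- so that the class genuinely lands in $V_4$ rather than merely in $\mathbb{F}_2^{3}$. Cases (i) and (ii), by contrast, are direct consequences of Lemma \ref{forma} and Remark \ref{meq}, the only computation being the routine substitution into the model.
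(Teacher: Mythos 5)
Your argument is correct, but it is organized quite differently from the paper's, whose entire proof is a single sentence: apply Remark \ref{inclusion} to $C_1:\alpha x^4+\beta y^4+\gamma z^4+x^2y^2+y^2z^2+z^2x^2=0$ and the diagonal Fermat twist $C_2:\alpha x^4+\beta y^4+\gamma z^4=0$, so that every cocycle valued in $\operatorname{Aut}(C_1)\cong V_4$ is read inside $\operatorname{Z}^1(G_k,\operatorname{Aut}(C_2))$, and then sift the already-classified Fermat cocycles of Section \ref{SecFermat} for those landing in the sign-change subgroup $V_4$: cases (i), (ii), (iii) are precisely the restrictions of the diagonal, almost-diagonal and $n=2$ non-diagonal twists of Theorems \ref{D}, \ref{AD}, \ref{ND}, with equivalence re-tested in $\operatorname{H}^1(G_k,V_4)$ because, as the remark warns, the inclusion of cocycle sets does not descend to cohomology. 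You instead handle (i) and (ii) from scratch via Lemma \ref{forma} and Remark \ref{meq}, exactly as in Example \ref{examp}, and only call on the Fermat computation in case (iii). The genuine addition is your module-theoretic treatment of (iii): since $(1,1,1)$ has odd coordinate sum, the permutation module $\mathbb{F}_2^{3}$ splits as $\langle(1,1,1)\rangle\oplus\ker(\Sigma)$, so $V_4\cong\mathbb{F}_2^{3}/\langle(1,1,1)\rangle$ is a direct summand and the map $\operatorname{H}^1(G_k,\mathbb{F}_2^{3})\cong k(\alpha)^{*}/k(\alpha)^{*2}\to\operatorname{H}^1(G_k,V_4)$ furnished by Shapiro's lemma is split surjective. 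This disposes of the completeness worry you yourself flag (every twist does come from some $\alpha_1$), shows that no ``$\alpha_1\alpha_2\alpha_3$ is a square'' side condition survives --- rescaling $\alpha_1,\alpha_2,\alpha_3$ by a common $\lambda\in k^{*}$ alters $\phi$ only by the scalar $\sqrt{\lambda}$ and a $k$-rational column operation, hence yields an equivalent twist by Remark \ref{meq} --- and exhibits the fibres of the parametrization as $k^{*}/k^{*2}$; all of this the paper leaves implicit in ``check which cocycles actually define twists.'' The paper's route buys uniformity and brevity by recycling the Fermat classification wholesale; yours buys a self-contained and more transparent completeness argument in the one case where Lemma \ref{forma} genuinely fails.
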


\begin{proof}
	Apply remark \ref{inclusion} to the curves $C_1:\,\alpha x^4+\beta y^4+\gamma z^4+x^2y^2+y^2z^2+z^2x^2=0$ and the diagonal twist of the Fermat quartic, (see section \ref{SecFermat}), $C_2:\alpha x^4+\beta y^4+\gamma z^4=0$. Finally, check which cocycles $\operatorname{Z}^1(\operatorname{G}_k,\operatorname{Aut}(C_2))$ actually define twists of $C_1$.
\end{proof}

\noindent \textbf{Remark.} \textit{Finding different twists for the quartics $C_{\alpha,\beta,\gamma}$ depending on the Galois group $\mathcal{G}$ is completely natural. Since this group determines the action of $G_k$ on $\operatorname{Aut}(C)$. This will also happen for cases V and VII in the modified Henn classification.}

\begin{proposition} Let $C$ be a non-hyperelliptic genus $3$ curve defined over a number field $k$ and with automorphism group isomorphic to $\text{C}_3$. This curve is isomorphic to a plane quartic curve, defined over $k$, of equation
	$$
	z^3y+P(x,y)=0.
	$$
	Furthermore, all its twists are in one to one correspondence with the set $k^{*}/k^{*3}$. Given $m\in k^{*}/k^{*3}$ one has the twist
	$$
	mz^3y+P(x,y)=0.
	$$
\end{proposition}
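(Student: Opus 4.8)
The plan is to follow the strategy announced at the start of Section~\ref{Otroscasos}, reading the twists off directly from Lemma~\ref{forma} and Remark~\ref{meq}. First I would invoke the modified Henn classification (the Proposition of Section~\ref{clasif}, case III) to reduce to a curve $C$ of the stated form $z^3y+P(x,y)=0$ defined over $k$, with $P\in k[x,y]$ homogeneous of degree $4$; this model represents the geometric point of $M_3$ attached to $C$ over the non-algebraically-closed field $k$, so such a $k$-rational model exists. The automorphism group is generated by $\sigma\colon[x:y:z]\mapsto[x:y:\zeta_3z]$, whose action on $\Omega^1(C)$ is the diagonal matrix $\operatorname{diag}(1,1,\zeta_3)$ (see the tables in the Appendix); in particular every element of $\operatorname{Aut}(C)=\{\operatorname{diag}(1,1,\zeta_3^{\,j}):0\le j\le 2\}$ is diagonal.

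Next I would apply the diagonal case of Lemma~\ref{forma}: since $\xi(\operatorname{Gal}(L/k))\subseteq\operatorname{Aut}(C)$ consists of diagonal matrices, any twist is represented by a diagonal isomorphism $\phi=\operatorname{diag}(\lambda,\mu,\nu)\colon C'\to C$. The cocycle condition $\xi_\tau=\phi\cdot{}^\tau\phi^{-1}\in\operatorname{Aut}(C)$ forces, for every $\tau\in G_k$, the relations $\lambda/{}^\tau\lambda=\mu/{}^\tau\mu=1$ and $\nu/{}^\tau\nu\in\langle\zeta_3\rangle$. The first two give $\lambda,\mu\in k^*$, while the third says that $\nu^3$ is $G_k$-invariant, i.e. $m:=\nu^3\in k^*$. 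By Remark~\ref{meq} I may then rescale the first two columns by the $k$-rational matrix $\operatorname{diag}(\lambda^{-1},\mu^{-1},1)\in\operatorname{GL}_3(k)$ to normalise $\phi$ to $\operatorname{diag}(1,1,\sqrt[3]{m})$. Applying $\phi$ to the coordinates as in Example~\ref{examp} yields the twisted equation $F(x,y,\sqrt[3]{m}\,z)=0$, that is $mz^3y+P(x,y)=0$, which visibly has coefficients in $k$.

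Finally, to obtain the bijection with $k^*/k^{*3}$ I would use Remark~\ref{meq} once more: two such twists $\operatorname{diag}(1,1,\sqrt[3]{m})$ and $\operatorname{diag}(1,1,\sqrt[3]{m'})$ are equivalent iff there are $\alpha=\operatorname{diag}(1,1,\zeta_3^{\,j})\in\operatorname{Aut}(C)$ and $M\in\operatorname{GL}_3(k)$ with $\alpha\circ\phi=\phi'\circ M$. Such an $M$ is diagonal with entries $1,1,\zeta_3^{\,j}\sqrt[3]{m}/\sqrt[3]{m'}$, so $M\in\operatorname{GL}_3(k)$ precisely when $\zeta_3^{\,j}\sqrt[3]{m}/\sqrt[3]{m'}\in k$; cubing this shows it occurs exactly when $m/m'\in k^{*3}$. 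This gives the claimed one-to-one correspondence $\operatorname{Twist}_k(C)\leftrightarrow k^*/k^{*3}$, $m\mapsto(mz^3y+P(x,y)=0)$.

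I expect no serious obstacle: the argument is the verification already packaged in Lemma~\ref{forma} and Remark~\ref{meq}, and the only steps needing a little care are the reduction showing that the cocycle condition forces $\lambda,\mu\in k^*$ and $\nu^3\in k^*$, and the matching normalisation in the equivalence criterion. Conceptually the statement is the Kummer-theoretic fact that $\operatorname{Aut}(C)\cong\mu_3$ as $G_k$-modules, via the $G_k$-equivariant map $\operatorname{diag}(1,1,\zeta_3^{\,j})\mapsto\zeta_3^{\,j}$, so that $\operatorname{H}^1(G_k,\operatorname{Aut}(C))\cong\operatorname{H}^1(G_k,\mu_3)\cong k^*/k^{*3}$; one could equally derive the proposition from this isomorphism, the explicit cocycle $\tau\mapsto\operatorname{diag}(1,1,{}^\tau\!\sqrt[3]{m}/\sqrt[3]{m})$ producing the same family of equations.
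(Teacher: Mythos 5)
Your proposal is correct and follows exactly the route the paper intends: the paper gives no separate proof for this proposition, stating at the top of Section \ref{Otroscasos} that it is ``a straightforward implication from Remark \ref{meq} and Lemma \ref{forma}'', which is precisely the diagonal-case argument you carry out (cocycle condition forcing $\phi=\operatorname{diag}(1,1,\sqrt[3]{m})$ with $m\in k^*$, and Remark \ref{meq} giving equivalence exactly when $m/m'\in k^{*3}$). Your closing Kummer-theoretic remark identifying $\operatorname{Aut}(C)\cong\mu_3$ as $G_k$-modules is a correct conceptual summary of the same computation.
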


The following proposition is example \ref{examp}.

\begin{proposition}  Let $C$ be a non-hyperelliptic genus $3$ curve defined over a number field $k$ such that $\operatorname{Aut}(C)\simeq S_3$. Then, there exist $a,b\in k$, such that $C$ is isomorphic to  $C_{a,b}$ in case IV of Henn classification. All its twists are parametrized by
	$$
	2a_1(x^3z+3mxy^2z)+2a_2m(3x^2yz+my^3z)+q^2(x^2-my^2)^2+aq(x^2-my^2)z^2+bz^4=0,
	$$
	where $a_1,\,a_2,\,m,\,q\in k$ satisfy the relation $a_{1}^{2}-ma_{2}^{2}=q^3$. Two such twists are equivalent if and only if they have the same parameter $m\in k^{*}/k^{*2}$ and there exist $b_1,\,b_2\in k$ such that
	$$
	a_1+a_2\sqrt{m}=(b_1+b_2\sqrt{m})^3(a'_1\pm a'_2\sqrt{m}).
	$$
	
\end{proposition}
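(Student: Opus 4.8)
The plan is to reduce everything to the worked computation in Example~\ref{examp}, whose model $x^3z+y^3z+x^2y^2+axyz^2+bz^4=0$ and automorphism group $S_3$ are precisely those of Case~IV. First I would justify the normal-form claim: a genus~$3$ curve $C/k$ with $\operatorname{Aut}(C)\simeq S_3$ has a canonical model that is a plane quartic defined over $k$, and by the (modified) Henn classification this model is $\bar k$-isomorphic to some $C_{a,b}$ of Case~IV; since the Case~IV family already carries its parameters rationally, a descent argument lets one take $a,b\in k$. This reduces the task to parametrizing $\operatorname{Twist}_k(C_{a,b})$.

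Next I would invoke the two tools of Section~\ref{lemmas}. The generators of $\operatorname{Aut}(C_{a,b})\simeq S_3$, namely $\operatorname{diag}(\zeta_3,\zeta_3^2,1)$ and the transposition of the first two coordinates, are both block matrices of the shape in~(\ref{F}). Hence Lemma~\ref{forma} guarantees that every twist $\phi\colon C'\to C_{a,b}$ may be represented by a block matrix, and Remark~\ref{meq} lets me normalize it, up to an automorphism and $k$-rational column operations, to
$$
\phi=\left(\begin{array}{ccc}\alpha&\alpha\beta&0\\ \gamma&\gamma\delta&0\\0&0&1\end{array}\right),\qquad \alpha\beta\gamma\delta\neq0.
$$
The twist equation is then $F_{a,b}(\alpha(x+\beta y),\gamma(x+\delta y),z)=0$, and requiring its coefficients to lie in $k$ forces the two rationality conditions~(\ref{RR1}) and~(\ref{RR2}).

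From here the computation is bookkeeping. Condition~(\ref{RR1}) gives $\beta+\delta,\beta\delta\in k$, so either $\beta,\delta\in k$ --- yielding the diagonal twists $\operatorname{diag}(\sqrt[3]{n},\sqrt[3]{n^2},1)$, the special case $m=1$ --- or $\beta,\delta$ are conjugate in a quadratic field $k(\sqrt m)$, in which case I normalize to $\delta=-\beta=\sqrt m$. Feeding this back into~(\ref{RR1}) and~(\ref{RR2}) yields $\alpha\gamma,\ \alpha^3+\gamma^3,\ (\alpha^3-\gamma^3)\sqrt m\in k$; writing $\alpha^3=a_1+a_2\sqrt m$ and $\gamma^3=a_1-a_2\sqrt m$ with $a_1,a_2\in k$, rationality of $\alpha\gamma=q$ is equivalent to $a_1^2-ma_2^2=q^3$. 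Expanding $F_{a,b}(\alpha(x+\sqrt m\,y),\gamma(x-\sqrt m\,y),z)$ then produces the displayed quartic, and applying Remark~\ref{meq} to two such normalized matrices gives the stated equivalence: the invariant $m\in k^*/k^{*2}$ must agree, while $a_1+a_2\sqrt m=(b_1+b_2\sqrt m)^3(a_1'\pm a_2'\sqrt m)$ records that one triple of cube roots is a cube-type $k(\sqrt m)^*$-scaling of the other.

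The main obstacle I anticipate is not any single calculation but the two places where necessity must be upgraded to sufficiency with the accounting kept clean: first, checking that every tuple $(a_1,a_2,m,q)$ with $a_1^2-ma_2^2=q^3$ genuinely arises from a cocycle, i.e.\ that $\phi$ is defined over a Galois extension $L/k$ with $\phi\cdot{}^{\sigma}\phi^{-1}\in\operatorname{Aut}(C_{a,b})$ for every $\sigma$, rather than merely satisfying the coefficient conditions; and second, disentangling the two signs $a_1'\pm a_2'\sqrt m$ in the equivalence, which reflect the freedom to precompose with the transposition in $S_3$. The descent of the Case~IV model to $k$ in the normal-form step is the other delicate point, though for the $S_3$ stratum it follows from the rationality of the Henn parameters.
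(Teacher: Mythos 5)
Your proposal is correct and follows essentially the same route as the paper: the paper's proof of this proposition is literally the reference back to Example~\ref{examp}, which carries out exactly the computation you describe (Lemma~\ref{forma} to get the block form, Remark~\ref{meq} to normalize $\phi$, the rationality conditions~(\ref{RR1})--(\ref{RR2}) splitting into the diagonal case and the quadratic case $\beta,\delta\in k(\sqrt{m})$, and the resulting equation and equivalence criterion). Your added remarks on the sufficiency direction and on the sign ambiguity coming from the transposition in $S_3$ are sensible but do not change the argument.
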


\begin{proposition}
	Let $C$ be a non-hyperelliptic genus $3$ curve defined over a number field $k$ such that $\operatorname{Aut}(C)\simeq D_4$. Then, there exist $a,b\in k$, such that $C$ is isomorphic to $C_{a,b}$ in case V of the modified Henn classification. All its twists are parametrized by
	$$
	2cx^4+8dmx^3y+12cmx^2y^2+8dm^2xy^3+2cm^2y^4+bq^2(x^2-my^2)^2+q(x^2-my^2)z^2+z^4=0,
	$$
	where $m\in k^{*}/k^{*2}$ and $c,\,d,\,q\in k$ satisfy  $c^2-d^2m=q^4a$. Two such twists are equivalent if and only if they have the same parameter $m\in k^{*}/k^{*2}$ and there exist $e,\,f\in k$ such that
	$$
	c+d\sqrt{m}=(e+f\sqrt{m})^4(c'\pm d'\sqrt{m}).
	$$
\end{proposition}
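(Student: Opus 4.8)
The plan is to run the argument of Example \ref{examp} almost verbatim, replacing the cube structure coming from the $C_3\subset S_3$ by the fourth-power structure coming from the $C_4\subset D_4$. First I would use the modified Henn classification to replace $C$ by a $k$-rational case V model $C_{a,b}\colon aX^4+Y^4+Z^4+bX^2Y^2+XYZ^2=0$, so that $\operatorname{Twist}_k(C)=\operatorname{Twist}_k(C_{a,b})$. The generators of $\Aut(C_{a,b})\simeq D_4$ recorded in the appendix are all block matrices $\left(\begin{smallmatrix}A&0\\0&1\end{smallmatrix}\right)$ fixing the line $\{X=Y=0\}$ (an order-$4$ diagonal element on $\langle X,Y\rangle$ and an anti-diagonal involution). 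Hence Lemma \ref{forma} lets me take any twisting isomorphism of the same block shape, and clearing the rational part of the two linear forms in the first block via Remark \ref{meq} exactly as in Example \ref{examp} reduces me to
$$
\phi=\begin{pmatrix}\alpha&\alpha\sqrt{m}&0\\ \gamma&-\gamma\sqrt{m}&0\\ 0&0&1\end{pmatrix}\colon C'\to C_{a,b},
$$
where the two rows cut out a Galois-stable pair of lines, so either $\sqrt{m}\in k$ (the diagonal subcase, recovered at $m=1$) or $m\in k^*\setminus k^{*2}$.

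Next I would impose that $C'\colon F_{a,b}(\alpha(x+\sqrt{m}y),\gamma(x-\sqrt{m}y),z)=0$ be defined over $k$. Writing $P=\alpha\gamma(x^2-my^2)$, the terms $XYZ^2$ and $bX^2Y^2$ contribute $Pz^2$ and $bP^2$, which are rational exactly when $\alpha\gamma=q\in k$; this already yields the block $bq^2(x^2-my^2)^2+q(x^2-my^2)z^2+z^4$. Expanding the remaining quartic $a\alpha^4(x+\sqrt m y)^4+\gamma^4(x-\sqrt m y)^4$ and demanding rational coefficients forces
$$
a\alpha^4+\gamma^4=2c\in k,\qquad \sqrt{m}\,(a\alpha^4-\gamma^4)=2dm\in k,
$$
i.e. $a\alpha^4=c+d\sqrt m$ and $\gamma^4=c-d\sqrt m$; multiplying gives $q^4a=(a\alpha^4)\gamma^4=c^2-d^2m$. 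Substituting back into the binomial expansion produces precisely the leading terms $2cx^4+8dmx^3y+12cmx^2y^2+8dm^2xy^3+2cm^2y^4$, so $C'$ is the stated equation, and $m=1$ recovers the diagonal twists.

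For the equivalence criterion I would apply Remark \ref{meq}: two such $\phi,\phi'$ are equivalent iff, after a left automorphism and a right block-diagonal $M\in\operatorname{GL}_3(k)$, their first blocks agree. The quadratic subextension $k(\sqrt m)/k$ is intrinsic to the pair of lines, so equivalence forces $m=m'$. The order-$4$ generator scales $\alpha$ by a fourth root of unity, the involution interchanges the two conjugate lines (the source of the sign $\pm$, i.e. of replacing $c'+d'\sqrt m$ by its conjugate), and the right $\operatorname{GL}_2(k)$-action rescales $\alpha$ by a unit $e+f\sqrt m\in k(\sqrt m)^*$. Since $a\alpha^4=c+d\sqrt m$, these say exactly that $c+d\sqrt m=(e+f\sqrt m)^4(c'\pm d'\sqrt m)$ for some $e,f\in k$.

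The hard part will be this last step: matching the combined left-$D_4$ / right-$\operatorname{GL}_2(k)$ action with the clean relation $c+d\sqrt m=(e+f\sqrt m)^4(c'\pm d'\sqrt m)$, that is, checking the admissible column operations are precisely rescalings by $k(\sqrt m)$-units with no spurious extra identifications, while tracking the fourth-power normalization imposed by $a\alpha^4=c+d\sqrt m$. I also expect to need a short check that each $\phi$ above genuinely satisfies the cocycle condition ${}^{\sigma}\phi\,\phi^{-1}\in\Aut(C_{a,b})$ (equivalently, that rationality of $C'$ suffices), and to absorb the case $m\in k^{*2}$ uniformly as $m=1$. As a consistency check, and an alternative route through Remark \ref{inclusion}, the leading five terms coincide with the almost-diagonal Fermat twists of Theorem \ref{AD} under $c\leftrightarrow a,\ d\leftrightarrow b$, reflecting the compatible inclusion of this $D_4$ into $\langle 96,64\rangle$.
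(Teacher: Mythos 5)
Your argument is correct, but it is not the route the paper takes. The paper's proof is a two-line reduction: apply Remark \ref{inclusion} to $C_1\colon ax^4+y^4+z^4+bx^2y^2+xyz^2=0$ and the diagonal Fermat twist $C_2\colon ax^4+y^4+z^4=0$, so that every cocycle in $\operatorname{Z}^1(G_k,\Aut(C_1))$ is read off inside $\operatorname{Z}^1(G_k,\Aut(C_2))$; the shapes of the twisting isomorphisms and of the resulting equations are then imported wholesale from the diagonal and almost-diagonal Fermat twists (Theorems \ref{D} and \ref{AD}), and one only has to check which of those also leave the extra terms $bx^2y^2+xyz^2$ rational and to recompute equivalence with respect to the smaller group $D_4$ (Remark \ref{inclusion} is only an inclusion of cocycles, not of cohomology sets). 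You instead redo the computation from scratch in the style of Example \ref{examp}: Lemma \ref{forma} to get block form, Remark \ref{meq} to normalize to $\beta=\sqrt{m}$, $\delta=-\sqrt{m}$, then direct expansion to extract the conditions $\alpha\gamma=q\in k$, $a\alpha^4=c+d\sqrt m$, $\gamma^4=c-d\sqrt m$, hence $c^2-d^2m=q^4a$ and the displayed equation. Your expansion is right and your closing observation that the leading five terms are exactly the almost-diagonal Fermat form under $c\leftrightarrow a$, $d\leftrightarrow b$ is precisely the paper's starting point rather than a consistency check. The trade-off: your route is self-contained and makes the rationality constraints explicit, at the cost of re-deriving what Section \ref{SecFermat} already provides and of having to argue the equivalence criterion by hand (where, as you note, one must verify that the admissible column operations are exactly rescalings by $k(\sqrt m)^*$ together with the conjugation coming from the anti-diagonal involution, and that the degenerate block shapes with a vanishing entry are absorbed into the diagonal case $m=1$); the paper's route gets the normal forms for free and localizes all the work in the single check of which Fermat cocycles survive.
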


\begin{proof}
	Apply Remark \ref{inclusion} to the curves $C_1:\,ax^4+y^4+z^4+bx^2y^2+xyz^2=0$ and the diagonal twist of the Fermat quartic, see Section \ref{SecFermat}, $C_2:ax^4+y^4+z^4=0$. Finally, check which cocycles $\operatorname{Z}^1(\operatorname{G}_k,\operatorname{Aut}(C_2))$ define twists of $C_1$.
\end{proof}

\begin{proposition} The set of $k$-isomorphism classes of non-hyperelliptic genus $3$ curves with automorphism group isomorphic to the cyclic group $\text{C}_6$ is parametrized by the set $k^*\times k^{*}/k^{*2}\times k^{*}/k^{*3}$. Given an element $(a,m,n)\in k^*\times k^{*}/k^{*2}\times k^{*}/k^{*3}$, we have the curve
	$$nz^3y+am^2x^4+mx^2y^2+y^4=0.$$
	Moreover, two such curves are $\bar{k}$-isomorphic if and only if they have the same parameter $a\in k^{*}$.
\end{proposition}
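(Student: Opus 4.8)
The plan is to follow the pattern of Example~\ref{examp}: since $\operatorname{Aut}(C_a)$ consists of diagonal matrices, every twist is diagonal, and the rationality conditions translate directly into Kummer data. The only point requiring genuinely new work is the geometric statement that $a$ separates $\bar k$-isomorphism classes.

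First I would record the automorphism group. Writing $C_a\colon z^3y+ax^4+x^2y^2+y^4=0$, one checks (cf.\ the appendix table) that $\operatorname{Aut}(C_a)$ is the cyclic group of order $6$ generated by $g=\operatorname{diag}(-1,1,\zeta_3)$; indeed $(x,y,z)\mapsto(-x,y,\zeta_3z)$ visibly preserves the equation. Every element of $\langle g\rangle$ is diagonal and $K=k(\zeta_3)$. The observation driving the count is that the decomposition $\langle g\rangle=\langle g^3\rangle\times\langle g^2\rangle$ is $G_k$-equivariant: the factor $\langle g^3\rangle=\langle\operatorname{diag}(-1,1,1)\rangle\cong\text{C}_2$ is fixed pointwise by $G_k$, while $\langle g^2\rangle=\langle\operatorname{diag}(1,1,\zeta_3)\rangle\cong\mu_3$ carries the cyclotomic action. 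Hence
$$\operatorname{H}^{1}(G_k,\operatorname{Aut}(C_a))\cong\operatorname{H}^{1}(G_k,\text{C}_2)\times\operatorname{H}^{1}(G_k,\mu_3)\cong k^{*}/k^{*2}\times k^{*}/k^{*3},$$
the first factor being a group of quadratic characters and the second coming from Kummer theory (independently of whether $\zeta_3\in k$). This produces the invariants $m$ and $n$.

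Next I would make the twists explicit. Because every automorphism is diagonal, Lemma~\ref{forma} lets me take $\phi\colon C'\to C_a$ diagonal, and after projective rescaling I may write $\phi=\operatorname{diag}(u,1,w)$. Substituting into $C_a$ yields
$$nz^3y+am^2x^4+mx^2y^2+y^4=0,\qquad m=u^{2},\ n=w^{3},$$
which is the stated model. Evaluating the cocycle $\xi_\sigma=\phi\cdot{}^{\sigma}\phi^{-1}=\operatorname{diag}(u/{}^{\sigma}u,1,w/{}^{\sigma}w)$ shows that $\xi_\sigma\in\langle g\rangle$ forces $u/{}^{\sigma}u\in\{\pm1\}$ and $w/{}^{\sigma}w\in\mu_3$, that is $m=u^2\in k^{*}$ and $n=w^3\in k^{*}$; conversely every pair $(m,n)\in k^{*}\times k^{*}$ occurs. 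Moreover the class of $\xi$ under the isomorphism above is exactly $(m\bmod k^{*2},\,n\bmod k^{*3})$, so by Remark~\ref{meq} two such twists are $k$-isomorphic iff $m\equiv m'\pmod{k^{*2}}$ and $n\equiv n'\pmod{k^{*3}}$. This step is routine and parallels Example~\ref{examp}.

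The hard part will be showing that $C_a\cong_{\bar k}C_{a'}$ forces $a=a'$; the reverse implication is immediate, since equal $a$ exhibits the curves as twists of one another. An isomorphism of smooth plane quartics is induced by some $M\in\operatorname{PGL}_3(\bar k)$, and it conjugates $\operatorname{Aut}(C_{a'})=\langle g\rangle$ onto $\operatorname{Aut}(C_a)=\langle g\rangle$; hence $MgM^{-1}$ is a generator of $\langle g\rangle$, namely $g$ or $g^{-1}$. Since $g$ is regular semisimple with eigenvalues $-1,1,\zeta_3$, a finite check shows that $\{-1,1,\zeta_3\}$ is not a scalar multiple of the eigenvalue multiset $\{-1,1,\zeta_3^{2}\}$ of $g^{-1}$, so $g$ and $g^{-1}$ are not conjugate in $\operatorname{PGL}_3(\bar k)$. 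Therefore $MgM^{-1}=g$, which forces $M$ into the centralizer of $g$, namely the diagonal torus; comparing the coefficients of $x^4$, $x^2y^2$ and $y^4$ under a diagonal substitution then gives $a=a'$. Finally I would assemble the statement: surjectivity of the parametrization onto the locus of curves with $\operatorname{Aut}\cong\text{C}_6$ is the content of the modified Henn classification (the Proposition in Section~\ref{clasif}), while injectivity follows by combining this rigidity, which pins down $a$, with the twist count, which pins down $(m,n)$. I expect the rigidity step forcing $M$ to be diagonal to be the only delicate point, everything else being bookkeeping.
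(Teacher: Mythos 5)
Your proof is correct and matches the paper's intended argument: the paper gives no separate proof for this proposition, appealing only to Lemma \ref{forma} (all automorphisms are diagonal, so every twist is given by a diagonal isomorphism) and Remark \ref{meq} (equivalence of such twists), which is precisely your computation of $\phi=\operatorname{diag}(u,1,w)$ with $m=u^{2}$, $n=w^{3}$ ranging over $k^{*}/k^{*2}\times k^{*}/k^{*3}$, and your Kummer-theoretic identification $\operatorname{H}^{1}(G_k,\text{C}_2\times\mu_3)\cong k^{*}/k^{*2}\times k^{*}/k^{*3}$ is just a structural restatement of the same count. Your additional rigidity argument --- that $g=\operatorname{diag}(-1,1,\zeta_3)$ and $g^{-1}$ are not conjugate in $\operatorname{PGL}_3(\bar{k})$ since no scalar carries the eigenvalue multiset $\{-1,1,\zeta_3\}$ to $\{-1,1,\zeta_3^{2}\}$, so any geometric isomorphism centralizes $g$, is therefore diagonal, and forces $a=a'$ --- is sound and supplies the one claim ($\bar{k}$-isomorphism classes being separated by $a$) that the paper leaves entirely to the reader.
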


\begin{proposition}
	Let $C$ be a non-hyperelliptic genus $3$ curve defined over a number field $k$ such that $\operatorname{Aut}(C)\simeq \operatorname{GAP}(16,13)$. Then, there exists $a\in k$, such that $C$ is isomorphic to $C_{a}$ in case VII of the modified Henn classification. All its twists are parametrized by
	$$
	2bx^4+8cmx^3y+12bmx^2y^2+8cm^2xy^3+2bm^2y^4+q(x^2-my^2)^2+z^4=0,
	$$
	where $m\in k^{*}/k^{*2}$ and $b,\,c,\,q\in k$ satisfy  $b^2-c^2m=q^4A$. Two such twists are equivalent if and only if they have the same parameter $m\in k^{*}/k^{*2}$ and there exist $d,\,e\in k$ such that
	$$
	b+c\sqrt{m}=(d+e\sqrt{m})^4(b'\pm c'\sqrt{m}).
	$$
\end{proposition}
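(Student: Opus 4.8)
The plan is to run the same argument used for cases~II and~V, combining the reduction to a normal form with Lemma~\ref{forma}, Remark~\ref{meq} and Remark~\ref{inclusion}, and feeding in the twists of the Fermat quartic from Section~\ref{SecFermat}. First I would fix the model: since $\operatorname{Aut}(C)\simeq\operatorname{GAP}(16,13)$, Henn's classification (Subsection~\ref{21}) makes $C$ $\bar k$-isomorphic to case~VII, and the modified classification of Section~\ref{clasif} then yields, as $C$ is defined over $k$, a model $C_a\colon ax^4+y^4+z^4+x^2y^2=0$ with $a\in k$. The structural observation is that in these coordinates every generator of $\operatorname{GAP}(16,13)$ listed in Table~\ref{autM} is a block matrix of the shape~(\ref{F}): the diagonal automorphisms $\operatorname{diag}(\pm1,\pm1,1)$ and $\operatorname{diag}(1,1,i)$, together with the interchange $\left(\begin{smallmatrix}0&\lambda&0\\1/\lambda&0&0\\0&0&1\end{smallmatrix}\right)$ of $x,y$ with $\lambda^4=1/a$. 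Hence the image of any cocycle lies among block matrices, so Lemma~\ref{forma} applies to every twist, and after normalizing the lower-right entry to $1$ by Remark~\ref{meq} each isomorphism $\phi\colon C'\to C_a$ may be taken block-diagonal with a $2\times2$ block in $\operatorname{GL}_2(\bar k)$.

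Next I would transfer the Fermat computation. One checks that $\operatorname{Aut}(C_a)\subseteq\operatorname{Aut}(C_2)$ compatibly with $G_k$, where $C_2\colon ax^4+y^4+z^4=0$ is the diagonal twist of the Fermat quartic: each of the generators above fixes $ax^4+y^4+z^4$ and $x^2y^2$ separately. By Remark~\ref{inclusion} every block-diagonal cocycle of $C_a$ is a cocycle of $C_2$, hence one of the diagonal or almost-diagonal twists classified in Theorems~\ref{D} and~\ref{AD}. I would therefore take $\phi=\left(\begin{smallmatrix}\alpha&\alpha\sqrt m&0\\\gamma&-\gamma\sqrt m&0\\0&0&1\end{smallmatrix}\right)$ with $\alpha=\sqrt[4]{(b+c\sqrt m)/a}$, $\gamma=\sqrt[4]{b-c\sqrt m}$ and $m\in k^{*}/k^{*2}$ (the choice $m=1$ giving back the purely diagonal twists), and substitute $X=\alpha(x+\sqrt m\,y)$, $Y=\gamma(x-\sqrt m\,y)$, $Z=z$ into $C_a$. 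The quartic part $aX^4+Y^4=(b+c\sqrt m)(x+\sqrt m\,y)^4+(b-c\sqrt m)(x-\sqrt m\,y)^4$ reproduces the first five monomials of the displayed equation, while $X^2Y^2=(\alpha\gamma)^2(x^2-my^2)^2$ and $Z^4=z^4$ give the remaining two.

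The next step is the analogue of ``check which cocycles actually define twists of $C_1$'' in the proofs of cases~II and~V: among the block-diagonal twists of $C_2$ I keep exactly those whose equation stays over $k$ once the monomial $x^2y^2$ is present. The quartic part is over $k$ precisely when $b,c\in k$, and the cross term forces the coefficient $(\alpha\gamma)^2$ of $(x^2-my^2)^2$ to be $k$-rational; writing $q$ for this coefficient and using $(\alpha\gamma)^4=\alpha^4\gamma^4=(b^2-c^2m)/a$ gives the recorded relation between $b,c,m,q$ and $a$, which is exactly the condition that $(b^2-c^2m)/a$ be a square in $k$. This produces the stated family, with $m$ ranging over $k^{*}/k^{*2}$.

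The substitution itself is routine; the real difficulty, and the point that distinguishes this case from the Fermat quartic, lies in pinning down the equivalence relation from Remark~\ref{meq}. Because $\operatorname{GAP}(16,13)$ is a proper subgroup of $\operatorname{Aut}(C_2)\simeq\langle96,64\rangle$ and every $\rho\in\operatorname{GAP}(16,13)$ is block-diagonal, any comparison $\rho\,\phi=\phi'M$ with $M\in\operatorname{GL}_3(k)$ stays block-diagonal, and unwinding it through the factorization $\left(\begin{smallmatrix}\alpha&\alpha\sqrt m\\\gamma&-\gamma\sqrt m\end{smallmatrix}\right)=\operatorname{diag}(\alpha,\gamma)\left(\begin{smallmatrix}1&\sqrt m\\1&-\sqrt m\end{smallmatrix}\right)$ shows that the fourth-power factor $(d+e\sqrt m)^4$ comes from $k$-rational column operations and the sign $\pm$ from the interchange of $x$ and $y$, both available in $\operatorname{GAP}(16,13)$. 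The delicate issue is the coefficient $q$: for the almost-diagonal twists of the Fermat quartic the quantity $(\alpha\gamma)^2$ does not enter the equation, so its sign is immaterial and $\operatorname{diag}(i,1,1)\in\langle96,64\rangle$ freely rescales $\alpha$; for $C_a$ this same quantity is the coefficient $q$, and since $\operatorname{diag}(i,1,1)\notin\operatorname{GAP}(16,13)$ I must check whether the two fourth-root choices producing $\pm q$ give genuinely distinct twists sharing the same field of definition (in the spirit of Remark~\ref{R3}), and carry the induced matching of $q$ along in the equivalence. Verifying that the resulting criterion is precisely that the twists share $k(\sqrt m)$ and satisfy $b+c\sqrt m=(d+e\sqrt m)^4(b'\pm c'\sqrt m)$ for some $d,e\in k$ is where I expect the main work to lie.
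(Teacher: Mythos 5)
Your proposal follows exactly the paper's (very terse) proof of this proposition: apply Remark~\ref{inclusion} to $C_1\colon ax^4+y^4+z^4+x^2y^2=0$ and the diagonal Fermat twist $C_2\colon ax^4+y^4+z^4=0$, import the diagonal and almost-diagonal twists from Theorems~\ref{D} and~\ref{AD}, and then check which of those cocycles survive the extra monomial $x^2y^2$ --- and your fleshed-out version, including the care taken over $\operatorname{diag}(i,1,1)\notin\operatorname{GAP}(16,13)$ in the equivalence analysis, is sound. (Incidentally, your computation yields $b^2-c^2m=aq^2$ for the coefficient $q$ of $(x^2-my^2)^2$, which is what the displayed equation actually requires; the relation $b^2-c^2m=q^4A$ in the statement appears to be a typo carried over from case~V.)
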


\begin{proof}
	Apply Remark \ref{inclusion} to the curves $C_1:\,ax^4+y^4+z^4+x^2y^2=0$ and the diagonal twist of the Fermat quartic, see Section \ref{SecFermat}, $C_2:ax^4+y^4+z^4=0$. Finally, check which cocycles $\operatorname{Z}^1(\operatorname{G}_k,\operatorname{Aut}(C_2))$ define twists of $C_1$.
\end{proof}

\begin{proposition} The set of $k$-isomorphism classes of non-hyperelliptic genus $3$ curves with automorphism group isomorphic to the symmetric group $S_4$, is parametrized by the set $k^{*}\times Pol_3^2(k)/\sim$ (see definition \ref{pol}). A plane quartic corresponding to $P(T)=(T-\alpha)(T-\beta)(T-\gamma)\in Pol_3^2(k)$ is given by the isomorphism
	$$
	\phi = \left(\begin{array}{ccc}\sqrt{\alpha} &\alpha\sqrt{\alpha}  &\alpha^{2}\sqrt{\alpha}\\ \sqrt{\beta} & \beta\sqrt{\beta} &\beta^{2} \sqrt{\beta}\\ \sqrt{\gamma} & \gamma\sqrt{\gamma} & \gamma^{2}\sqrt{\gamma}\end{array}\right):\,C_{\alpha,\beta,\gamma}\rightarrow C_a:\,x^4+y^4+z^4+a(x^2y^2+y^2z^2+z^2x^2)=0.
	$$
\end{proposition}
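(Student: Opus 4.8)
The plan is to handle this case exactly as the preceding propositions of Section \ref{Otroscasos}: realise the $S_4$-curve inside the Fermat quartic at the level of automorphism groups, transport the relevant Fermat twists through Remark \ref{inclusion}, and cut out the equivalence with Remark \ref{meq}. I would first produce the $k^*$-factor, i.e.\ the geometric classification. By Henn's classification (Section \ref{21}) any non-hyperelliptic genus $3$ curve $C/k$ with $\operatorname{Aut}(C)\cong S_4$ is $\bar k$-isomorphic to a curve of case VIII, $C_a\colon x^4+y^4+z^4+a(x^2y^2+y^2z^2+z^2x^2)=0$. Since case VIII is absent from the modified Henn table, the representative $C_a$ is already defined over $k$ whenever $C$ is: the $\bar k$-isomorphism class of $C$ is $G_k$-stable and $a$ is a Galois-equivariant complete invariant of that class, so $a\in k^*$. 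This gives the first factor and reduces everything to computing $\operatorname{Twist}_k(C_a)$ for a fixed $a$.

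Next I would set up the inclusion. The group $\operatorname{Aut}(C_a)=S_4$ is generated by the coordinate permutations and the diagonal sign changes $\operatorname{diag}(\pm1,\pm1,\pm1)$, all defined over $\Q$; hence $K=k$ and $G_k$ acts trivially on $S_4$. Writing $S_4=V_4\rtimes S_3$ with $V_4$ the image of the sign changes in $\PGL_3(\bar k)$, I note that $\operatorname{Aut}(C_F)\cong\langle 96,64\rangle\cong(\Z/4)^2\rtimes S_3$ contains this $V_4\rtimes S_3$ as the $2$-torsion of its diagonal torus together with the common $S_3$; both groups fix $x^4+y^4+z^4$ and $x^2y^2+y^2z^2+z^2x^2$, so the embedding is a $G_k$-compatible inclusion of subgroups of $\PGL_3(\bar k)$. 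Applying Remark \ref{inclusion} to $C_1=C_a$ and $C_2=C_F$, every cocycle in $\operatorname{Z}^1(G_k,S_4)$ becomes one into $\operatorname{Aut}(C_F)$, and I would match it against the non-diagonal twists of Proposition \ref{prop} and Theorem \ref{ND}. Since $V_4$ uses only the $\pm1$ (square) part of the torus and never the fourth root $i$, the cocycles landing in $S_4$ are exactly the $n=2$ non-diagonal twists of the Fermat, parametrised by separable cubics with roots $\alpha,\beta,\gamma$ satisfying $\sqrt{\alpha\beta\gamma}\in k$, i.e.\ by $Pol_3^2(k)$; correspondingly their fields of definition drop the factor $k(i)$.

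For the explicit equation I would observe that the Vandermonde-with-square-roots matrix $\phi$ of the statement is precisely the $n=2$ isomorphism of Theorem \ref{ND}, so that $\phi^{-1}(C_a)$ carries the coefficients $\binom{4}{i}\binom{4-i}{j}S_{1+j+2k}$ appearing there; the step requiring care is to check that the extra symmetric term $a(x^2y^2+y^2z^2+z^2x^2)$, being $S_4$-invariant, also pulls back to a $k$-rational form under $\phi$ exactly when $\sqrt{\alpha\beta\gamma}\in k$. This confirms that $\phi\colon C_{\alpha,\beta,\gamma}\to C_a$ is defined over $k$ and that the target is genuinely $C_a$ rather than merely $C_F$ — the subtlety Remark \ref{inclusion} flags, namely that only those $\operatorname{Aut}(C_F)$-cocycles preserving the additional structure define twists of $C_1=C_a$. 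For the equivalence relation I would then invoke Remark \ref{meq}: two such twists $\phi,\phi'$ are $k$-equivalent iff $\beta\circ\phi=\phi'\circ M$ for some $M\in\GL_3(k)$ and $\beta\in S_4$. Because $\operatorname{Aut}(C_a)=S_4$ is strictly smaller than $\operatorname{Aut}(C_F)$, we quotient by fewer automorphisms than in the Fermat situation, so the relation is finer than the "same splitting field" of Remark \ref{R3}; unwinding the permitted column operations (permutation, sign change, and $k$-rational recombination) should yield exactly the relation $\sim$ of Definition \ref{pol}, same splitting field with roots congruent modulo $M^{*4}$.

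I expect the crux to be these last two steps together: showing that the non-diagonal $S_4$-cocycles are in honest bijection — no loss, no overcounting — with $Pol_3^2(k)$, and that the identification comes out modulo fourth powers rather than squares. Equivalently, one must reproduce $\operatorname{H}^1(G_k,S_4)$ exactly from cubics with square product carrying the fourth-power congruence on their roots; the bookkeeping of the $V_4$ sign data against the $S_3$ permutation data, and the precise exponent in that congruence, is where the real content lies, the transformation of the equation itself being a routine Vandermonde computation.
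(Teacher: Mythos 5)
Your proposal is correct and follows essentially the same route as the paper, whose entire proof is the two-sentence instruction to apply Remark \ref{inclusion} to $C_a$ and the Fermat quartic $C_F$ and then check which cocycles in $\operatorname{Z}^1(G_k,\operatorname{Aut}(C_F))$ actually define twists of $C_a$. You simply carry out that sketch in more detail (the $G_k$-equivariant embedding $S_4\hookrightarrow\langle 96,64\rangle$, the matching with the $\sqrt{\alpha\beta\gamma}\in k$ twists, and the equivalence via Remark \ref{meq}), which is exactly what the paper leaves implicit.
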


\begin{proof}
	Apply Remark \ref{inclusion} to the curves $C_a$ and the Fermat quartic, $C_F:x^4+y^4+z^4=0$. Finally, check which cocycles $\operatorname{Z}^1(\operatorname{G}_k,\operatorname{Aut}(C_F))$ actually define twists of $C_a$.
\end{proof}

\begin{proposition} The set of $k$-isomorphism classes of non-hyperelliptic genus $3$ curves with automorphism group isomorphic to the cyclic group $\text{C}_9$ is parametrized by the set $ k^{*}/k^{*9}$. Given an element $a\in  k^{*}/k^{*9}$, we have the curve
	$$az^3y+x^4+x^2y^2+y^4=0.$$
	All these curves are $\bar{k}$-isomorphic.
\end{proposition}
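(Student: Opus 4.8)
The plan is to treat this exactly like the cyclic cases $\mathrm{C}_3$ and $\mathrm{C}_6$ handled above, but to exploit that here the automorphism group is already isomorphic, \emph{as a Galois module}, to $\mu_9$, which lets Kummer theory do essentially all of the work. First I would invoke Henn's classification (Subsection \ref{21}): case IX exhibits a single $\bar{k}$-isomorphism class of non-hyperelliptic genus $3$ curves with automorphism group $\mathrm{C}_9$, represented by $C\colon x^4+xy^3+yz^3=0$. This at once gives the last sentence of the statement, and it reduces the problem to computing $\operatorname{Twist}_k(C)$, since every curve in the proposition is a twist of $C$ and $\operatorname{Twist}_k(C')=\operatorname{Twist}_k(C)$ for any twist $C'$ (see the start of Section \ref{clasif}).

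Next I would pin down the automorphism group together with its Galois action. From the generators in the Appendix one checks that $\operatorname{Aut}(C)=\langle g\rangle$ with $g=\operatorname{diag}(\zeta_9,\zeta_9^4,1)\in\operatorname{PGL}_3(\bar{k})$, so that $K=k(\zeta_9)$ is the minimal field of definition of the automorphisms. The key computation is the Galois action: for $\sigma\in G_k$ with $\sigma(\zeta_9)=\zeta_9^{\chi(\sigma)}$ one has ${}^\sigma g=\operatorname{diag}(\zeta_9^{\chi(\sigma)},\zeta_9^{4\chi(\sigma)},1)=g^{\chi(\sigma)}$, where $\chi$ denotes the mod $9$ cyclotomic character. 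Hence $\operatorname{Aut}(C)\cong\mu_9$ as $G_k$-modules, via $g\mapsto\zeta_9$.

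With this identification the cohomological count is immediate: combining the correspondence $\operatorname{Twist}_k(C)\leftrightarrow\operatorname{H}^1(G_k,\operatorname{Aut}(C))$ recalled in the Introduction with Hilbert 90 applied to the Kummer sequence $1\to\mu_9\to\bar{k}^{*}\xrightarrow{\ x\mapsto x^9\ }\bar{k}^{*}\to 1$, one obtains
\[
\operatorname{Twist}_k(C)\;\leftrightarrow\;\operatorname{H}^1(G_k,\mu_9)\;\cong\;k^{*}/k^{*9},
\]
valid for \emph{every} number field $k$, with no hypothesis on $\zeta_9$. Because $\operatorname{Aut}(C)$ is abelian this bijection underlies a group isomorphism, so distinct classes in $k^{*}/k^{*9}$ cannot collapse; equivalently, distinctness can be re-derived a posteriori from Remark \ref{meq}.

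Finally I would make the bijection explicit. Given $a\in k^{*}$, its Kummer class corresponds to the class represented by the diagonal isomorphism $\phi=\operatorname{diag}(\sqrt[9]{a},\sqrt[9]{a^4},1)\colon C'\to C$, whose cocycle $\sigma\mapsto\phi\cdot{}^\sigma\phi^{-1}$ takes values in $\langle g\rangle\cong\mu_9$ and recovers the Kummer class of $a$. Since every value of this cocycle is a diagonal matrix, Lemma \ref{forma} already guarantees that the twist may be realized by such a diagonal $\phi$, so no generality is lost. Substituting $\phi$ into the defining polynomial of $C$ and clearing the common factor then produces an explicit defining equation for the twist, with field of definition $k(\zeta_9,\sqrt[9]{a})$. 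The step I expect to be the crux is the Galois-module identification $\operatorname{Aut}(C)\cong\mu_9$: once it is in place, both the parametrization by $k^{*}/k^{*9}$ and the explicit twist fall out of Kummer theory and Lemma \ref{forma}, whereas a direct cocycle computation that ignored the $\mu_9$-structure would be considerably more delicate, especially when $\zeta_9\notin k$.
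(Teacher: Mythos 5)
Your argument is correct and establishes the parametrization by $k^{*}/k^{*9}$, but it takes a more conceptual route than the paper. The paper gives no written proof for this proposition: it falls under the blanket remark at the start of Section \ref{Otroscasos} that such cases follow ``straightforwardly'' from Remark \ref{meq} and Lemma \ref{forma} --- that is, one observes that the generator of $\operatorname{Aut}(C)$ is diagonal, uses Lemma \ref{forma} to reduce to diagonal isomorphisms $\phi=\operatorname{diag}(\lambda,\mu,\nu)$, imposes $k$-rationality of the transformed quartic to pin down $\lambda,\mu,\nu$ as ninth roots, and decides equivalence via Remark \ref{meq}. Your Galois-module identification $\operatorname{Aut}(C)\cong\mu_9$ is correct (your $g=\operatorname{diag}(\zeta_9,\zeta_9^4,1)$ is the fifth power in $\operatorname{PGL}_3$ of the generator $\operatorname{diag}(\zeta_3,1,\zeta_9)$ of Table \ref{aut}, and ${}^\sigma g=g^{\chi(\sigma)}$ holds), and running it through the Kummer sequence buys you two things the paper's implicit computation leaves to the reader: the bijection with $k^{*}/k^{*9}$ with no case distinction on whether $\zeta_9\in k$, and the injectivity (non-collapsing of classes), which in the paper's approach requires a separate check with Remark \ref{meq}. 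This is the same mechanism the paper itself exploits for the diagonal twists of the Fermat quartic, so the two approaches are compatible; yours is simply the cleaner packaging.

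One discrepancy you must address if you carry out your last step: substituting $\phi=\operatorname{diag}(\sqrt[9]{a},\sqrt[9]{a^4},1)$ into $x^4+xy^3+yz^3$ and clearing the common factor $a^{4/9}$ yields the family $x^4+a\,xy^3+yz^3=0$, \emph{not} the equation $az^3y+x^4+x^2y^2+y^4=0$ printed in the statement. The printed equation cannot be correct: every curve $az^3y+x^4+x^2y^2+y^4=0$ admits the order-$6$ automorphism $\operatorname{diag}(-1,1,\zeta_3)$, so its automorphism group contains an element of order $2$ and is never $\text{C}_9$; the displayed family is visibly a copy of the neighbouring $\text{C}_6$ family. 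This is an error in the paper rather than a gap in your argument, but a proof of the proposition as literally stated is impossible, so you should state and justify the corrected equation $x^4+a\,xy^3+yz^3=0$ (with field of definition $k(\zeta_9,\sqrt[9]{a})$) rather than leave the final substitution implicit.
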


\subsection*{Case X}
In this case, there is only one curve $C:\,x^4+y^4+xz^3=0$. Remark \ref{forma} implies that its twists are given by isomorphisms of the form
$$
\phi=\left(\begin{array}{ccc}\alpha & 0 &\alpha\beta\\ 0 &1 &0\\ \gamma & 0&\gamma\delta \end{array}\right):\,C'\rightarrow C.
$$
By looking at the equation of $C'$ that defines this isomorphism, we get the conditions (we set $A=\alpha^4$ and $B=\alpha\gamma^3$),
$$\begin{aligned}
A+B=q_1,\,
4A\beta+B(\beta+3\delta)=q_2,\,
6A\beta^2+B(3\delta^2+3\beta\delta)=q_3\in k\\
4A\beta^3+B(\delta^3+3\beta\delta^2)=q_4,\,
A\beta^4+B\beta\delta^3=q_5\in k\\
\end{aligned}
$$
We can assume $q_1\neq 0$ and $q_2=0$. Hence $B=q_1-A$. From the second equation, we get $A=-\frac{q_1(\beta+3\delta)}{3(\beta-\delta)}$ (notice that $\beta\neq \delta$), and thus $B=\frac{4q_1\beta}{3(\beta-\delta)}$. Substituting in the other equations we get
$$
\beta(2\delta+\beta)=-\frac{q_3}{2q_1},\,
\beta(\beta^2+4\beta\delta+\delta^2)=-\frac{3q_4}{4q_1},\,
\beta^2(2\delta+\beta)^2=-\frac{3q_5}{q_1}.
$$
So, in particular, $q_{3}^2=-12q_1q_5$, $\delta=-\frac{\beta^2+\frac{q_3}{2q_1}}{2\beta}$ and
$$
q_1\beta^4+q_3\beta^2-q_4\beta-\frac{q_{3}^{2}}{12q_1}=0.
$$
So, given $q_1,\,q_3,\,q_4\in k$, we have the twist
$$
C':\, q_1x^4+q_3x^2z^2+q_4xz^3-\frac{q_{3}^{2}}{12q_1}z^4+y^4=0.
$$
After dividing $x$ by a suitable $k$-rational number, the equation by $q_1$, and renaming the parameters, we get the equation
$$
C':\, x^4+ax^2z^2+xz^3-\frac{a^{2}}{12}z^4+by^4=0.
$$
Two such twist are equivalent if and only if $b'=q^4b$ for some $q\in k$, and if $a'=a$, as can be checked using Remark \ref{meq}. \\

We summarize the previous discussion in the following proposition: 

\begin{proposition} The set of $k$-isomorphism classes of non-hyperelliptic genus $3$ curves with automorphism group isomorphic to the group $\operatorname{GAP}(48,33)$ is parametrized by the set $k\times k^{*}/k^{*4}$. Given an element $(a,b)\in k\times k^{*}/k^{*4}$, we have the curve
	$$x^4+ax^2z^2+xz^3-\frac{a^{2}}{12}z^4+by^4=0.$$
	All these curves are $\bar{k}$-isomorphic.
\end{proposition}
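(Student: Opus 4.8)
The plan is to reduce the statement to a twist computation and then carry out that computation explicitly for $C\colon x^4+y^4+xz^3=0$. By Henn's classification (section \ref{21}), $C$ is, up to $\bar k$-isomorphism, the only plane quartic whose automorphism group is $\operatorname{GAP}(48,33)$; this already yields the last assertion that all the curves in the family are $\bar k$-isomorphic. Consequently every such curve over $k$ is a twist of $C$, so the set of $k$-isomorphism classes is $\operatorname{Twist}_k(C)\leftrightarrow\operatorname{H}^1(G_k,\operatorname{Aut}(C))$, and it suffices to enumerate these twists and exhibit one equation per class.

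First I would record generators of $\operatorname{Aut}(C)$ (see the appendix table) and observe that, in the coordinates $(x,y,z)$, they are diagonal matrices fixing the $y$-line, i.e.\ block matrices with a $2\times2$ block on the $(x,z)$ coordinates and a scalar on $y$. Lemma \ref{forma} then forces any twisting isomorphism $\phi\colon C'\to C$ to have the same block shape, and after applying Remark \ref{meq} to simplify the columns one may take $\phi$ in the displayed form with rows $(\alpha,0,\alpha\beta)$, $(0,1,0)$, $(\gamma,0,\gamma\delta)$. Substituting $\phi$ into the equation of $C$ and writing $A=\alpha^4$, $B=\alpha\gamma^3$, the requirement that $C'$ be defined over $k$ becomes the condition that the five coefficients $q_1,\dots,q_5$ of $x^4,x^3z,x^2z^2,xz^3,z^4$ all lie in $k$.

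The computational heart of the argument is to solve this system. Using Remark \ref{meq} I would normalize $q_1\neq0$ and $q_2=0$, solve the first two relations for $A$ and $B$ in terms of $q_1,\beta,\delta$, and back-substitute into the remaining three. This should pin down $\beta,\delta$ and produce the single relation $q_3^2=-12\,q_1 q_5$, so that for arbitrary $q_1\in k^*$ and $q_3,q_4\in k$ one obtains the twist $q_1x^4+q_3x^2z^2+q_4xz^3-\tfrac{q_3^2}{12q_1}z^4+y^4=0$. Rescaling $x$ and dividing the equation by $q_1$ normalizes the $x^4$ and $xz^3$ coefficients, turns the $z^4$ coefficient into $-a^2/12$ with $a$ the new $x^2z^2$ coefficient, and leaves a $y^4$ coefficient $b$ that is only well-defined modulo $k^{*4}$; this is the origin of the factor $k^{*}/k^{*4}$.

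Finally I would establish the bijection with $k\times k^{*}/k^{*4}$ by using Remark \ref{meq} once more to show that two normalized models are $k$-isomorphic exactly when their parameters $a$ agree and their $b$ agree modulo fourth powers. I expect the main obstacle to be the third step: disentangling the nonlinear rationality conditions on $A,B,\beta,\delta$, extracting the clean relation $q_3^2=-12\,q_1 q_5$, and correctly tracking the variable scalings so that the invariant $a\in k$ and the class $b\in k^{*}/k^{*4}$ emerge unambiguously.
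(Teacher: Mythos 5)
Your proposal follows essentially the same route as the paper: reduce to twists of the unique model $x^4+y^4+xz^3=0$, use Lemma \ref{forma} and Remark \ref{meq} to put $\phi$ in the stated block form, introduce $A=\alpha^4$, $B=\alpha\gamma^3$, normalize $q_1\neq0$, $q_2=0$, derive the relation $q_3^2=-12\,q_1q_5$, and rescale to reach the normal form with $a\in k$ and $b\in k^{*}/k^{*4}$. The steps you outline, including the final equivalence check via Remark \ref{meq}, match the paper's argument point for point.
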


\section{Twists of the Klein quartic}\label{SecKlein}

In this section we compute the twists of the Klein quartic 
$$
C_K:\,x^3y+y^3z+z^3x=0,
$$
defined over a number field $k$. Different models of this curve are known (see \cite{Elk} for the first two), and they will be useful in the sequel. Here $\epsilon=\frac{-1+\sqrt{-7}}{2}$ and $\zeta=\zeta_7$.
$$
C_{S_4}:\,x^4+y^4+z^4+3\epsilon(x^2y^2+y^2z^2+z^2x^2)=0,
$$
$$
C_0:\,x^4+y^4+z^4+6(xy^3+yz^3+zx^3)-3(x^2y^2+y^2z^2+z^2x^2)+3xyz(x+y+z)=0,
$$
$$
C_{D_4}:\,\sqrt{-7}\epsilon^2/16(x^4+y^4)+z^4-3\epsilon^2/8x^2y^2+3\epsilon xyz^2=0
$$
Isomorphisms between this curves are given by the matrices 
$$
\phi_1=\left(\begin{array}{ccc}1 & 1+\zeta\epsilon &\zeta^2+\zeta^6\\ 1+\zeta\epsilon &\zeta^2+\zeta^6 &1\\ \zeta^2+\zeta^6 & 1&1+\zeta\epsilon \end{array}\right):\,C_{S_4}\rightarrow C_K,\,\phi_2=\left(\begin{array}{ccc} -\epsilon & 1 &2\epsilon+3\\ 2\epsilon+3 &-\epsilon &1\\ 1 &2\epsilon+3&-\epsilon \end{array}\right):\,C_0\rightarrow C_{S_4}
$$

$$
\phi_3=\left(\begin{array}{ccc}1/2&1/2&0\\-i/2&i/2&0\\0&0&1\end{array}\right):\,C_{S_4}\rightarrow C_{D_4},\,\phi_0=\phi_1\cdot \phi_2:\,C_0\rightarrow C_K
$$

We have $\operatorname{Twist}_k(C_K)=\operatorname{Twist}_k(C_0)$, and the curve $C_0$ has the advantage that its automorphism group $\operatorname{Aut}(C_0)$ is defined over $k(\sqrt{-7})$, instead of $k(\zeta_7)$. So, it will be more convenient computing $\operatorname{Twist}_k(C_0)$. We use the method described in \cite{Lor14}. The group $\operatorname{Aut}(C_0)$ is generated by the matrices $g=\phi_{0}^{-1}\alpha_1\phi_0$, $h=\phi_{0}^{-1}\alpha_2\phi_0$ and $s=\phi_{0}^{-1}\alpha_3\phi_0$.

Let us first assume that $\sqrt{-7}\notin k$, and let $K=k(\sqrt{-7})$ and $\text{Gal}(K/k)=<\tau>$. Then, the options for the pairs $(G,H)$ are (we computed them with the Magma code in \cite[Appendix]{Tesis}): 
\begin{table}[H]
	\caption{Pairs $(G,H)$ for the Klein quartic.}
	\label{pairsKlein}
	\begin{center}
		\begin{tabular}{|c|c|c|c|c|c|}
			\hline
			& $ \operatorname{ID}( G) $ \rule[0.4cm]{0cm}{0cm} & $ \operatorname{ID}(H)$ & $\operatorname{gen}(H)$ & $h$&$n_{(G,H)}$\tabularnewline
			\hline
			$1$ & $<2,1>$ & $<1,1>$ & $ 1 $ & $ 1 $&1\tabularnewline
			\hline
			$2$ & $<4,2>$ & $<2,1>$ & $ s $ & $ 1 $&1\tabularnewline
			\hline
			$3$ & $<6,1>$ & $<3,1>$ & $ h $ & $ s $&1\tabularnewline
			\hline
			$4$ & $<6,2>$ & $<3,1>$ & $ h$ & $ 1 $&1\tabularnewline
			\hline
			$5$ & $<14,1>$ & $<7,1>$ & $ g $ & $ 1 $&2\tabularnewline
			\hline
			$6$ & $<8,1>$ & $<4,1>$ & $ g^2sg^3sg^2 $ & $ g^2sg^5 $&2\tabularnewline
			\hline
			$7$ & $<8,3>$ & $<4,1>$ & $ g^2sg^3sg^2$ & $ 1 $&2\tabularnewline
			\hline
			$8$ & $<12,4>$ & $<6,1>$ & $ h,\,s $ & $ 1 $&1\tabularnewline
			\hline
			$9$ & $<42,1>$ & $<21,1>$ & $ g,\,h $ & $ 1 $&2\tabularnewline
			\hline
			$10$ & $<16,7>$ & $<8,3>$ & $ g^2sg^3sg^2,\,g^2sg^5 $ & $ 1 $&4\tabularnewline
			\hline
			$11$ & $<336,208>$ & $<168,42>$ & $ s,\,g,\,h $ & $ 1 $&2\tabularnewline
			\hline

		\end{tabular}
	\end{center}
	$ $
\end{table}
This means that $ G$ is the group whose elements are $(g,1)$, for $g$ in $H$, together with the elements $(gh,\tau)$, for $g$ in $H$. The number $n_{(G,H)}$ is computed with the formula in Remark \ref{R3} and it is the number of non-equivalent twists defined over a same field $L$ solution to the Galois embedding problem corresponding to a pair $(G,H)$.

\begin{theorem}[Twists of the Klein quartic]\label{TwistsKlein} The following table shows the solutions to the Galois embedding problems corresponding to the $11$ cases in table \ref{pairsKlein} for the twists of the Klein quartic. Explicit isomorphism are also showed.
	
	\vspace{5mm}
	
	\begin{longtable}[H]{|c|c|c|c|}
		\hline
		&&&\\
		& $ L $ \rule[0.4cm]{0cm}{0cm} & $ \phi:\,C'\rightarrow C_0$ & Other twists over $L$\tabularnewline
		&&&\\
		\hline
		&&&\\
		$1$ & $K$ &{\footnotesize $\left(\begin{array}{ccc}1 & 0 &0\\ 0 &1 &0\\ 0 & 0&1 \end{array}\right)$} & $-$ \tabularnewline
		&&&\\
		\hline
		&&&\\
		$2$ & $K(\sqrt{m})$ & {\footnotesize $\left(\begin{array}{ccc}2 & \sqrt{m} &0\\ -3 &0 &\sqrt{m}\\ 1 & -2\sqrt{m}&3\sqrt{m} \end{array}\right)$}& $-$ \tabularnewline
		&&&\\
		\hline
		&&&\\
		$3$ & $\begin{array}{c}K(\alpha,\beta,\gamma)\\ \Delta=-7q^2\end{array}$ & {\footnotesize$\left(\begin{array}{ccc}\sqrt{-7} & -3\alpha+2\beta+\gamma &\alpha\beta-3\beta\gamma+2\gamma\alpha\\ \sqrt{-7} &\alpha-3\beta+2\gamma &2\alpha\beta+\beta\gamma-3\gamma\alpha\\ \sqrt{-7} & 2\alpha+\beta-3\gamma&-3\alpha\beta+2\beta\gamma+\gamma\alpha \end{array}\right)$}& $-$ \tabularnewline
		&&&\\
		\hline
		&&&\\
		$4$ & $\begin{array}{c}K(\alpha,\beta,\gamma)\\ \Delta=q^2\end{array}$  &{\footnotesize $\left(\begin{array}{ccc}1 & -3\alpha+2\beta+\gamma &\alpha\beta-3\beta\gamma+2\gamma\alpha\\ 1 &\alpha-3\beta+2\gamma &2\alpha\beta+\beta\gamma-3\gamma\alpha\\ 1 & 2\alpha+\beta-3\gamma&-3\alpha\beta+2\beta\gamma+\gamma\alpha \end{array}\right)$} & $-$ \tabularnewline
		&&&\\
		\hline
		&&&\\
		$5$ & $\begin{array}{c} \widetilde{K(\sqrt[7]{\beta_1},\sqrt[7]{\beta_2},\sqrt[7]{\beta_3})}\\ K(\beta_1,\beta_2,\beta_3)=k(\zeta_7)\\ \text{see Lemma \ref{caso5}} \end{array}$ &{\footnotesize $\phi_{0}^{-1}\circ \left(\begin{array}{ccc}\sqrt[7]{\beta_1} & \beta_1\sqrt[7]{\beta_1} &\beta_1^2\sqrt[7]{\beta_1}\\ \sqrt[7]{\beta_2} &\beta_2\sqrt[7]{\beta_2} &\beta_2^2\sqrt[7]{\beta_2}\\ \sqrt[7]{\beta_3} &\beta_3\sqrt[7]{\beta_3}&\beta_3^2\sqrt[7]{\beta_3} \end{array}\right)$} & $\beta_1,\beta_2,\beta_3\rightarrow\beta_1^6,\beta_2^6,\beta_3^6$ \tabularnewline
		&&&\\
		\hline
		&&&\\
		$6$ & $\begin{array}{c} K(\sqrt[8]{-7\gamma^2})\\ k(i,\sqrt{2})=k \end{array}$ &{\footnotesize $\phi_{3}^{-1}\circ\text{diag}(\sqrt[8]{-7\gamma^2},\frac{\sqrt[8]{(-7\gamma^2)^7}}{7\gamma},\frac{1}{1+2\sqrt{2}+\sqrt{-7}})$} & $\gamma\rightarrow\gamma^3$ \tabularnewline
		&&&\\
		\hline
		&&&\\
		$7$ & $\begin{array}{c}K(\sqrt{a+b\sqrt{m}})\\ a^2-mb^2=-7mq^2\\{\footnotesize S=\sqrt{a+b\sqrt{m}}}\\{\footnotesize A=\frac{\sqrt{a-b\sqrt{m}}}{\sqrt{-7}\sqrt{a+b\sqrt{m}}}}\end{array}$ &{\footnotesize $\left(\begin{array}{ccc}3 & (5A-1)S &\sqrt{m}(5A+1)S\\ -1 &(3A-3)S &\sqrt{m}(3A+3)S\\ -2 & 6AS&6\sqrt{m}AS \end{array}\right)$}&{\footnotesize $\sqrt{a+b\sqrt{m}}\rightarrow\sqrt{2a+q\sqrt{-7m}}$}\tabularnewline
		&&&\\
		\hline
		&&&\\
		$8$ & $K(\alpha,\beta,\gamma)$ & {\footnotesize$\left(\begin{array}{ccc}\sqrt{\Delta} & -3\alpha+2\beta+\gamma &\alpha\beta-3\beta\gamma+2\gamma\alpha\\ \sqrt{\Delta} &\alpha-3\beta+2\gamma &2\alpha\beta+\beta\gamma-3\gamma\alpha\\ \sqrt{\Delta} & 2\alpha+\beta-3\gamma&-3\alpha\beta+2\beta\gamma+\gamma\alpha \end{array}\right)$}& $-$ \tabularnewline
		&&&\\
		\hline
		&&&\\
		$9$ & $\begin{array}{c} \widetilde{K(\sqrt[7]{\beta_1},\sqrt[7]{\beta_2},\sqrt[7]{\beta_3})}\\  \text{see Lemma \ref{caso9}} \end{array}$ &{\footnotesize $\phi_{0}^{-1}\circ \left(\begin{array}{ccc}\sqrt[7]{\beta_1} & \beta_1\sqrt[7]{\beta_1} &\beta_1^2\sqrt[7]{\beta_1}\\ \sqrt[7]{\beta_2} &\beta_2\sqrt[7]{\beta_2} &\beta_2^2\sqrt[7]{\beta_2}\\ \sqrt[7]{\beta_3} &\beta_3\sqrt[7]{\beta_3}&\beta_3^2\sqrt[7]{\beta_3} \end{array}\right)$} & $\beta_1,\beta_2,\beta_3\rightarrow\beta_1^6,\beta_2^6,\beta_3^6$ \tabularnewline
		&&&\\
		\hline
		&&&\\
		& $k(\zeta_7,\sqrt[7]{m})$ & {\footnotesize $\phi_{0}^{-1}\circ\text{diag}(\sqrt[7]{m},\sqrt[7]{m^4},\sqrt[7]{m^2})$} & $m\rightarrow m^6$\tabularnewline
		&&&\\
		\hline
		&&&\\
		$10$& $\begin{array}{c} K(i,\sqrt[8]{-7\gamma^2})\\ k(\sqrt{2})=k \end{array}$ &{\footnotesize $\phi_{3}^{-1}\circ\text{diag}(\sqrt[8]{-7\gamma^2},\frac{\sqrt[8]{(-7\gamma^2)^7}}{7\gamma},\frac{1}{1+2\sqrt{2}+\sqrt{-7}})$ }& $\gamma\rightarrow \bar{\gamma},\gamma^3,\bar{\gamma}^3$ \tabularnewline
		&&&\\
		\hline
		&&&\\
		$11$ & $\begin{array}{c} K(\alpha_1,...,\alpha_7)\\ \text{see Lemma \ref{caso11}}\end{array}$ & $\begin{array}{c} \mathcal{E}M\mathcal{F} \\ \text{see Proposition \ref{prop11}}\end{array}$& $-$ \tabularnewline
		&&&\\
		\hline
	\end{longtable}
	
\end{theorem}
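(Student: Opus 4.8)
The statement is Theorem \ref{TwistsKlein}, a table assembling the solutions of eleven Galois embedding problems (the pairs $(G,H)$ of Table \ref{pairsKlein}) together with explicit twisting isomorphisms $\phi:\,C'\to C_0$. The plan is to treat the three structural families of cases uniformly and then handle the genuinely hard cases (5, 9, 10, 11) by reduction to already-established results. First I would settle the diagonalizable cases (1, 3, 4, 8), where the image cocycle lands in a subgroup on which the first component acts through matrices with a common column structure; here I invoke Lemma \ref{forma} to straighten $\phi$ into block form, and the embedding problem is solved by a cubic field $K(\alpha,\beta,\gamma)$ over $K=k(\sqrt{-7})$ whose discriminant $\Delta$ is pinned down (to $-7q^2$, $q^2$, or left free) precisely by analyzing the action of $\tau$ on the cocycle and on $\sqrt{\Delta}$, exactly as in the sixth pair of Proposition \ref{prop}. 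The displayed $3\times 3$ matrices are then verified directly to carry $C_0$ to a $k$-rational quartic, which is a routine computation I would not grind through.

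Next I would dispatch cases 2, 6, 7, and 10 of the almost-diagonal / dihedral type. For case 2 the embedding problem is a single quadratic extension $K(\sqrt m)$ and the matrix is written down and checked. For the $D_4$-type cases 6, 7, and 10 the key input is Proposition $4.1$ in \cite{Lor14} on $D_4$-embedding problems, which produces a field of the form $K(\sqrt[4]{\cdot})$ or $K(\sqrt{a+b\sqrt m})$; the quadratic relations $a^2-mb^2=-7mq^2$ (case 7) and the octic radical $\sqrt[8]{-7\gamma^2}$ (cases 6, 10) emerge from tracking the norm condition forced by $\sqrt[n]{\alpha\beta\gamma}\in k$ and the Galois action, and the models $C_{D_4}$ and the isomorphism $\phi_3$ let me transport everything back to $C_0$. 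The multiplicity column (the ``other twists over $L$'') is governed entirely by the numbers $n_{(G,H)}$ already computed in Table \ref{pairsKlein} via the formula in Remark \ref{R3}, so I only need to exhibit the stated representatives $\gamma\mapsto\gamma^3$, etc.

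The main obstacle lies in the cyclic-$C_7$ and full-$\PSL_2(\mathbb F_7)$ cases (5, 9, 11), where I expect to lean on three auxiliary lemmas (Lemmas \ref{caso5}, \ref{caso9}, \ref{caso11}, proved separately). For cases 5 and 9 the automorphism $g$ has order $7$, so the relevant embedding problem is a $C_7$-extension; the Vandermonde-type isomorphism in the table is conjugated by $\phi_0$ back to $C_0$, and one checks that the resulting quartic is $k$-rational exactly when the $\beta_i$ are a Galois-stable triple with $\sqrt[7]{\beta_1\beta_2\beta_3}\in k$, the case $K(\beta_1,\beta_2,\beta_3)=k(\zeta_7)$ being isolated in Lemma \ref{caso5}. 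The truly hard step is case 11, the generic twist, where $G\simeq\langle 336,208\rangle$ and $H\simeq\langle 168,42\rangle\simeq\PSL_2(\mathbb F_7)$: here no diagonalization is available, the field $L=K(\alpha_1,\dots,\alpha_7)$ must be described via the degree-seven resolvent of Lemma \ref{caso11}, and the twisting isomorphism is only presented in the factored form $\mathcal E M\mathcal F$ of Proposition \ref{prop11}. I would therefore quote Lemma \ref{caso11} for the existence and description of $L$, and Proposition \ref{prop11} for the explicit matrix, reducing the proof of the last row of the table to those two results and leaving the heavy $\PSL_2(\mathbb F_7)$-equivariant computation to be carried out there.
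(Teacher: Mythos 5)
Your proposal follows essentially the same route as the paper: the easy pairs are solved as classical embedding problems, the hard cases $5,6,9,10,11$ are delegated to the dedicated Lemmas \ref{caso5}, \ref{C8}, \ref{caso9}, \ref{caso10} and \ref{caso11} (your ``norm-condition'' sketch for cases 6 and 10 is exactly what Lemmas \ref{C8} and \ref{caso10} carry out, deriving the constraints $k=k(i,\sqrt{2})$ resp.\ $k=k(\sqrt{2})$), and the displayed matrices are then checked to satisfy $\xi_{\sigma}=\phi\cdot{}^{\sigma}\phi^{-1}$, with Proposition \ref{prop11} handling case 11. The only real difference is organizational --- you group the cases by matrix shape rather than by difficulty of the embedding problem --- which does not change the substance.
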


\begin{remark}
	Some the twists of the Klein quartic are compute in \cite{HK} using a completely different approach.
\end{remark}

\begin{proof}
	The Galois embedding problems for the pairs in cases $1,2,3,4,7,8$ are well-known (see \cite{Gam}) or easy to solve. For cases $5,6,9,10,11$, we use Lemmas \ref{caso5}, \ref{C8}, \ref{caso9}, \ref{caso10} and \ref{caso11}. Then, it is easy to check that the provided isomorphisms satisfy $\xi_{\sigma}=\phi\cdot^{\sigma}\phi^{-1}$, see Proposition \ref{prop11} for the last case.
\end{proof}

\begin{lemma}\label{caso5}	
	The fields $L$ that provide solution to the Galois embedding problem for pair $(G,H)=$ $(<14,1>,<7,1>)$ are given by the only index $3$ Galois extensions of the fields $\tilde{L}=k(\zeta_7,\sqrt[7]{\beta_1},\sqrt[7]{\beta_2},\sqrt[7]{\beta_3})$ with $\beta_1\in k(\zeta_7+\zeta^{6}_7)$ and where $\beta_2$ and $\beta_3$ are its conjugates.
\end{lemma}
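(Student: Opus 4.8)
We need to characterize the fields $L$ solving the Galois embedding problem attached to the pair $(\langle 14,1\rangle, \langle 7,1\rangle)$ in Table \ref{pairsKlein}. Here $\langle 14,1\rangle = C_{14}$ and $\langle 7,1\rangle = C_7$, and the generator of $H$ is $g$, an order-$7$ automorphism of $C_0$ (the conjugate of $\alpha_1$, the standard order-$7$ automorphism of the Klein quartic). So $\operatorname{Gal}(L/K) \simeq C_7$ is generated by $g$, and $\operatorname{Gal}(L/k) \simeq C_{14} \simeq C_7 \times C_2$ since the extension is abelian. The claim is that such $L$ are exactly the degree-$3$ Galois subextensions of the fields $\tilde L = k(\zeta_7, \sqrt[7]{\beta_1}, \sqrt[7]{\beta_2}, \sqrt[7]{\beta_3})$, where $\beta_1$ lies in the real cyclotomic field $k(\zeta_7 + \zeta_7^6)$ and $\beta_2, \beta_3$ are its conjugates.

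\begin{proof}[Proof plan]
The plan is to translate the group-theoretic data of the pair $(G,H)=(C_{14},C_7)$ into field-theoretic constraints on $L$, then realize those constraints via Kummer theory over $k(\zeta_7)$. First I would record what the embedding problem demands: a proper solution $\Psi\colon G_k \twoheadrightarrow G = C_{14}$ lifting the projection $G_k \twoheadrightarrow \operatorname{Gal}(K/k) = C_2$, with kernel giving $\operatorname{Gal}(L/K)=H=C_7$ generated by the order-$7$ automorphism $g$. Since $C_{14}$ is cyclic (hence abelian), $L/k$ must be abelian with group $C_7 \times C_2$, and the $C_2$-part is forced to be $K = k(\sqrt{-7})$. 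The core of the problem is therefore to describe the degree-$7$ cyclic extensions $L/K$ on which the Galois action of $\tau$ (the generator of $\operatorname{Gal}(K/k)$) is compatible with the semidirect-product structure of $\Gamma$ — but here, because $G$ is abelian, $\tau$ must act trivially on $H = \langle g\rangle$ by conjugation, i.e.\ the order-$7$ extension descends in a controlled way.

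The main computation is Kummer-theoretic. I would adjoin $\zeta_7$ and work over $k(\zeta_7)$, where every $C_7$-extension is of the form $k(\zeta_7, \sqrt[7]{\beta})$ for some $\beta$. The key step is to determine which $\beta$ produce an $L$ that (a) is abelian over $k$ with the correct group and (b) matches the prescribed action making $g$ an automorphism of $C_0$ defined with the right eigenvalue structure. The condition that $L/k$ be Galois with the $C_7$-factor stable forces the Galois conjugates of $\beta$ over $k$ to generate the same Kummer data, which is exactly the statement that one should take $\beta_1 \in k(\zeta_7+\zeta_7^6)$ — the maximal real subfield, i.e.\ the fixed field of complex conjugation $\zeta_7 \mapsto \zeta_7^6$ — together with its conjugates $\beta_2, \beta_3$ under $\operatorname{Gal}(k(\zeta_7+\zeta_7^6)/k)\simeq C_3$. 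Adjoining all three seventh roots gives the compositum $\tilde L = k(\zeta_7, \sqrt[7]{\beta_1}, \sqrt[7]{\beta_2}, \sqrt[7]{\beta_3})$, and I would check that $\tilde L$ has the expected degree and Galois group over $k$, with $\operatorname{Gal}(\tilde L/K)$ accommodating a $C_7$-quotient.

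The final and most delicate step is the \emph{index $3$} assertion: $L$ is not $\tilde L$ itself but a specific degree-$3$ Galois subextension of $\tilde L/k$. The reason is that $\operatorname{Gal}(\tilde L/k)$ is larger than $C_{14}$ — it contains the $C_3$ coming from $\operatorname{Gal}(k(\zeta_7+\zeta_7^6)/k)$ acting on the index set $\{\beta_1,\beta_2,\beta_3\}$ — so to obtain $\operatorname{Gal}(L/k)\simeq C_{14}$ one must cut down by a factor of $3$. I would identify the relevant normal subgroup of $\operatorname{Gal}(\tilde L/k)$ of index giving $L$, and verify that exactly one such index-$3$ Galois extension arises (the uniqueness being what ``the only index $3$ Galois extensions'' encodes), by an argument analogous to the one used for the ninth pair of Proposition \ref{prop} — passing through an intermediate $S_3$- or $C_3$-extension and pinning down $L$ as the unique subfield on which the order-$7$ generator acts as a single Galois orbit. \textbf{The hard part} will be this last descent: showing that among the subextensions of $\tilde L$ there is a \emph{unique} one that is both degree $3$ over the expected base and Galois over $k$ with the right cyclic group, which requires carefully tracking how the $C_3$-symmetry permuting the $\beta_i$ interacts with the seventh-power Kummer classes and the constraint $\sqrt[7]{\beta_1\beta_2\beta_3} \in k$ analogous to the norm condition appearing in Proposition \ref{prop}.
\end{proof}
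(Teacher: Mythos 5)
There is a genuine error at the very start of your proposal that derails the whole argument: you identify $\langle 14,1\rangle$ with $C_{14}$ and conclude that $L/k$ is abelian with group $C_7\times C_2$. In the GAP SmallGroup ordering used throughout the paper, $\langle 2p,1\rangle$ is the \emph{dihedral} group and $\langle 2p,2\rangle$ the cyclic one (compare the Fermat table, where $\langle 6,1\rangle$ occurs with $\triangle=-1$, i.e.\ is $S_3$, and $\langle 6,2\rangle$ with $\triangle=1$ is $C_6$). So here $G\simeq D_7$, the extension $L/k$ is non-abelian, and $\tau$ acts on $\operatorname{Gal}(L/K)=\langle g\rangle$ by inversion. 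This is not a cosmetic slip: in the Kummer-theoretic description over $k(\zeta_7)$, the eigenspace condition on the class of $\beta$ under complex conjugation $c$ (which is nontrivial on $K=k(\sqrt{-7})\subset k(\zeta_7)$) reads $c(\beta)\equiv\beta^{\chi(c)\psi(c)^{-1}}$ with $\chi(c)=-1$; the conclusion $\beta\in k(\zeta_7+\zeta_7^6)$, i.e.\ $c(\beta)=\beta$, holds exactly when $\psi(c)=-1$, i.e.\ in the dihedral case. Had you carried your abelian assumption through consistently, you would instead have arrived at a norm-type condition $\beta\,c(\beta)\in(k(\zeta_7)^*)^7$ and a different (incorrect) characterization of $L$. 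Your plan only lands on the statement of the lemma because you assert its conclusion rather than derive it.

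Beyond that, the two steps you flag as the substance of the proof are left as assertions. The paper's actual argument starts from a solution $L$, sets $\tilde L=L(\zeta_7)$ with $[\tilde L:L]=3$, writes $\tilde L=k(\zeta_7,\sqrt[7]{\alpha})$ with $\alpha\in k(\zeta_7)$ by Kummer theory, and then produces an explicit replacement generator $\beta=(\sqrt[7]{\alpha}+\tau^3(\sqrt[7]{\alpha}))^7$ (with $\tau$ of order $6$ in $\operatorname{Gal}(\tilde L/k)$), checking it is fixed by the order-$7$ elements and by $\tau^3$, hence lies in $k(\zeta_7+\zeta_7^6)$ and still generates $\tilde L$; the uniqueness of the index-$3$ normal subextension is then immediate from the structure of $\operatorname{Gal}(\tilde L/k)$ (its Sylow $3$-subgroup is central, hence unique and normal). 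Your proposal gestures at both steps (``the Galois condition forces $\beta_1\in k(\zeta_7+\zeta_7^6)$'', ``identify the relevant normal subgroup'') without supplying the construction or the group-theoretic verification, so even after correcting the group identification there is no proof here yet.
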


\begin{proof}
	Consider a solution with splitting field $L$ and the field extension $\tilde{L}=L(\zeta_7)$. Then $\left[ \tilde{L}:L\right]=3$ and $\tilde{L}=k(\zeta_7,\sqrt[7]{\alpha})$, where $\alpha\in k(\zeta_7)$ by Kummer theory. Since $\operatorname{Gal}(\tilde{L}/k)\simeq <42,4>$, then $\alpha\notin K$. We will prove that we can assume $\alpha\in k(\zeta_7+\zeta^{6}_7)$. Let $\tau\in\operatorname{Gal}(\tilde{L}/k)$ be an element of order $6$, then $\sqrt[7]{\alpha}+\tau^3(\sqrt[7]{\alpha})$ is only fixed by $\tau^3$, but $(\sqrt[7]{\alpha}+\tau^3(\sqrt[7]{\alpha}))^7$ is also fixed by any of the order seven elements. Let us denote by $\sigma$ one of these elements. If we denote $\beta=(\sqrt[7]{\alpha}+\tau^3(\sqrt[7]{\alpha}))^7$, it is $\tilde{L}=k(\zeta_7,\sqrt[7]{\beta})$ and $\beta\in k(\zeta_7+\zeta_{7}^6)$. The field $L$ is the only normal subextension of index $3$. Let us denote $\beta_1=\beta$, and $\beta_2$, $\beta_3$ its two conjugates in $\tilde{L}$. Then $\sqrt[7]{\beta_1\beta_2\beta_3}\in k$. 
\end{proof}

\begin{lemma}\label{C8} Let $L$ be the splitting field for a proper solution to the Galois embedding problem corresponding to pair $(G,H)=(<8,1>,<4,1>)$. Then we have $k(i,\sqrt{2})=k$ and $L=k(\sqrt[8]{-7\gamma^2})$, for some $\gamma\in k$.
\end{lemma}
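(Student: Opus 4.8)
The plan is to first pin down the abstract groups: in the GAP small-group library $\langle 8,1\rangle$ is the cyclic group $\mathrm{C}_8$ and $\langle 4,1\rangle$ is $\mathrm{C}_4$, so a proper solution to this embedding problem is exactly a cyclic degree-$8$ Galois extension $L/k$ with $\operatorname{Gal}(L/k)=\langle\sigma\rangle\cong \mathrm{C}_8$ whose index-$2$ subgroup $H=\langle\sigma^2\rangle\cong \mathrm{C}_4$ cuts out $K$, i.e. the (unique, since $G$ is cyclic) quadratic subfield of $L$ is $K=k(\sqrt{-7})$. Since $\operatorname{Aut}(C_0)$ is defined over $K$, the group $G_K$ acts trivially on it, so the restriction of the cocycle $\xi$ to $H$ is a genuine homomorphism; by the pair data of Table \ref{pairsKlein} its image is $\langle\eta\rangle\cong \mathrm{C}_4$ with $\eta:=\xi_{\sigma^2}$ an automorphism of exact order $4$.

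First I would determine the eigenvalues of $\eta$ on $\Omega^1(C_0)\cong\bar k^{3}$. Reading off the character table of $\operatorname{Aut}(C_0)\cong\operatorname{PSL}_2(\mathbb{F}_7)$ on its $3$-dimensional representation, an order-$4$ element has character value $1$ and determinant $1$, which forces the eigenvalue triple $\{1,i,-i\}$. Thus, after conjugating to the model $C_{D_4}$ by $\phi_3$ (this is precisely the change of coordinates that diagonalises the order-$4$ symmetry), $\eta$ becomes the diagonal matrix $\operatorname{diag}(i,-i,1)$, defined over $k(i)$.

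The crux is to show $\zeta_8\in k$, equivalently $k(i,\sqrt2)=k$. Since $G=\mathrm{C}_8$ is abelian, $\sigma$ centralises $\eta=\sigma^2$; as $\operatorname{diag}(i,-i,1)$ has three distinct eigenvalues, everything commuting with it is diagonal in the $C_{D_4}$-coordinates, so the twist may be taken diagonal and $\xi_\sigma=\operatorname{diag}(s_1,s_2,s_3)$ with $\xi_\sigma\cdot{}^{\tau}\xi_\sigma=\eta$ (here ${}^{\sigma}={}^{\tau}$ on $\operatorname{Aut}(C_0)$, as $\sigma\notin G_K$). Comparing the first two entries gives $s_1\,{}^{\tau}s_1\sim i$ and $s_2\,{}^{\tau}s_2\sim -i$, so the $s_j$ are square roots of fourth roots of unity; requiring $\xi_\sigma$ to be an element of $\operatorname{PSL}_2(\mathbb{F}_7)$ of admissible order (one of $1,2,3,4,7$) rules out every possibility except order $4$, which forces $s_1,s_2$ to be primitive eighth roots of unity. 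Hence $\zeta_8$ must be $k$-rational. The one point needing care is to exclude $i\in K\setminus k$: were $i\notin k$ but $i\in L$, uniqueness of the quadratic subfield of the cyclic octic $L$ would give $k(i)=K=k(\sqrt{-7})$, i.e. $\sqrt7\in k$; then $\sigma$ would act on $i$ by $\tau$ and swap the eigenvalues $i\leftrightarrow -i$ of $\eta$, contradicting ${}^{\tau}\eta=\xi_\sigma^{-1}\eta\,\xi_\sigma$ together with the commutativity of $\mathrm{C}_8$. Therefore $i\in k$, and since $\sqrt2=\zeta_8+\zeta_8^{-1}$ also $\sqrt2\in k$, proving $k(i,\sqrt2)=k$.

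Finally, with $\zeta_8\in k$ in hand, Kummer theory identifies every cyclic octic $L/k$ as $L=k(\sqrt[8]{a})$ for some $a\in k^{*}/k^{*8}$; its unique quadratic subfield is $k(\sqrt{a})$, and forcing this to equal $K=k(\sqrt{-7})$ gives $a\equiv -7\gamma^{2}\pmod{k^{*8}}$ for some $\gamma\in k^{*}$, so $L=k(\sqrt[8]{-7\gamma^{2}})$ as claimed. I expect the genuine obstacle to be the eighth-root-of-unity step: cyclic octic extensions do not in general force $\zeta_8$ into the base field, so the argument must exploit the specific eigenvalues $\{1,i,-i\}$ of the order-$4$ automorphism and the admissible-order constraint coming from $\operatorname{PSL}_2(\mathbb{F}_7)$, rather than any soft Galois-theoretic fact.
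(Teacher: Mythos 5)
Your reduction of the embedding problem to the existence of a cyclic octic extension $L/k$ whose quadratic subfield is $K=k(\sqrt{-7})$ is correct, as is the eigenvalue computation (the $3$-dimensional character of $\operatorname{PSL}_2(\mathbb{F}_7)$ takes the value $1$ on the order-$4$ class, giving $\{1,i,-i\}$), and the final Kummer step is fine \emph{once} $\zeta_8\in k$ is known. The gap is in the central step. Commutativity of $G\cong \mathrm{C}_8$ inside $\Gamma=\operatorname{Aut}(C_0)\rtimes\operatorname{Gal}(K/k)$ does \emph{not} say that the matrix $\xi_\sigma$ commutes with $\eta$: expanding $(\xi_\sigma,\tau)(\eta,1)=(\eta,1)(\xi_\sigma,\tau)$ gives only the twisted relation $\xi_\sigma\,{}^{\tau}\eta\,\xi_\sigma^{-1}=\eta$. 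Moreover ${}^{\tau}\eta=\eta$ is impossible here: if it held, then $\xi_\sigma$ would lie in $C_{\operatorname{Aut}(C_0)}(\eta)=\langle\eta\rangle$ (the centralizer of an order-$4$ element of $\operatorname{PSL}_2(\mathbb{F}_7)$ has order $4$), so $\xi_\sigma=\eta^{j}$ and $\eta=\xi_\sigma\,{}^{\tau}\xi_\sigma=\eta^{2j}$, contradicting that $\eta$ has order $4$; equivalently, the centralizer of $\eta$ in $\Gamma\cong\operatorname{PGL}_2(\mathbb{F}_7)$ is the nonsplit torus $\mathrm{C}_8$, whose unique involution is $\eta^2\in\operatorname{Aut}(C_0)$, so no involution of the form $(1,\tau)$ can centralize $\eta$. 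Hence $\xi_\sigma$ is \emph{never} diagonal in the eigenbasis of $\eta$, and your whole computation with $\xi_\sigma=\operatorname{diag}(s_1,s_2,s_3)$ analyses an empty set of cocycles.

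There is a second, independent problem: even granting the diagonal shape, the inference ``$s_1,s_2$ are primitive eighth roots of unity, hence $\zeta_8$ is $k$-rational'' is a non sequitur. The cocycle takes values in the fixed finite group $\operatorname{Aut}(C_0)$, defined over $K$; its matrix entries (in whatever coordinates) are the same algebraic numbers for every base field $k$, so no inspection of them can yield a rationality statement about $k$. The constraint $k(i,\sqrt 2)=k$ must come from the arithmetic of the splitting field $L$ itself, which your argument never engages --- symptomatically, you never use the number $-7$, although the conclusion genuinely depends on the quadratic subfield being $k(\sqrt{-7})$ rather than $k(i)$ or $k(\sqrt{\pm 2})$, for which the analogous contradiction would evaporate. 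The paper's proof is entirely different and supplies exactly this missing content: it base-changes to $k_0=k(i,\sqrt 2)$, writes $Lk_0=k_0(\sqrt[8]{-7\gamma^2})$ by Kummer theory, and exploits the commutativity of $\operatorname{Gal}(Lk_0/k)\cong\operatorname{Gal}(L/k)\times\operatorname{Gal}(k_0/k)$ to determine how the generators of $\operatorname{Gal}(k_0/k)$ act on $\gamma$ and on $\sqrt[8]{-7\gamma^2}$ (e.g.\ producing the invariant $\sqrt[4]{-7\gamma\gamma_1}$ and the relation $\gamma_1=q^8\gamma^7$); this forces $\sqrt{-7}$ into a proper subfield of $k_0$, contradicting $L\cap k_0=k$, unless $k_0=k$. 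To repair your proof you would need this descent/equivariance analysis of $L$ over $k(i,\sqrt 2)$, or an equivalent substitute; the eigenvalue and ``admissible order'' considerations cannot do that work.
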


\begin{proof} Let us define $k_0=k(i,\sqrt{2})$ and $L_0=L(i,\sqrt{2})$. Then, by Kummer theory, $L_0=k_0(\sqrt[8]{-7\gamma^2})$ for some $\gamma\in k_0$. Since $L_0=L\cdot k_0$, we have $\operatorname{Gal}(L_0/k)=\operatorname{Gal}(L/k)\times \operatorname{Gal}(k_0/k)=\langle \sigma_0\rangle\times\langle b\sigma_1,\,\sigma_2\rangle$, where
	$$
	\sigma_0:\,\zeta_8,\sqrt[8]{-7\gamma^{2}}\rightarrow\zeta_{8},\zeta_{8}\sqrt[8]{-7\gamma^{2}},\,
	\sigma_1:\,\zeta_8,\sqrt[8]{-7\gamma^{2}}\rightarrow\zeta_{8}^{7},\sqrt[8]{-7\gamma^{2}_{1}},\,
	\sigma_2:\,\zeta_8,\sqrt[8]{-7\gamma^{2}}\rightarrow\zeta_{8}^{5},\sqrt[8]{-7\gamma^{2}_{2}},\,
	$$
	where $\gamma_1,\gamma_2$ are the corresponding conjugates of $\gamma$ in $k_0$, and $\sigma_0^8=\sigma_1^2=c\sigma_2^2=1$ and they commute. Which implies that $\sigma_0(\sqrt[8]{-7\gamma^{2}_{1}})=\zeta_{8}^{7}\sqrt[8]{-7\gamma^{2}}$.
	
	Then, the element $\sqrt[4]{-7\gamma\gamma_1}$ is fixed by the action of $\sigma_0$ and $\sigma_1$. Therefore $-7\gamma\gamma_1=s^4$ for some $s\in k(\sqrt{2})$. Moreover, $\gamma_1=q^8\gamma^7$ for some $q\in k_0$. As a consequence, $-7q^8\gamma^8=s^4$ and $\sqrt{-7}\in k_0$, which is a contradiction with $\sigma-1\in \operatorname{Gal}(L_0/k)$. Idem with $\sigma_2$. So, finally, $L_0=L$ and $k_0=k=k(i,\sqrt{2})$.
\end{proof}

\begin{lemma}\label{caso9} Let $L$ be the splitting field for a proper solution to the Galois embedding problem corresponding to pair $(G,H)=(<42,1>,<21,1>)$. If $\zeta_7 \in L$, then $L=k(\zeta_7,\sqrt[7]{m})$, for some $m\in k$ (see Proposition $4.1$ in \cite{Lor14}). Otherwise, $L$ is the only normal subextension of index $3$ in $k(\zeta_7,\sqrt[7]{\beta_1},\sqrt[7]{\beta_2},\sqrt[7]{\beta_3})$, where $k(\beta_1,\beta_2,\beta_3)$ is cyclic degree extension of $k$ and $\beta_1,\beta_2,\beta_3$ are conjugated numbers. 
\end{lemma}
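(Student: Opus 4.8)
The plan is to analyze the Galois embedding problem attached to the pair $(G,H)=(\langle 42,1\rangle,\langle 21,1\rangle)$, where $H\cong \text{C}_7\rtimes \text{C}_3$ (the subgroup generated by $g$ of order $7$ and $h$ of order $3$, with $h$ acting on $\langle g\rangle$) and $G$ is its extension by the quotient $\operatorname{Gal}(K/k)\cong \text{C}_2$. So $L/k$ is Galois with $\operatorname{Gal}(L/k)\cong \langle 42,1\rangle$ and $\operatorname{Gal}(L/K)\cong \langle 21,1\rangle\cong \text{C}_7\rtimes\text{C}_3$. The natural dichotomy is whether or not $\zeta_7$ already lies in $L$, and I would split the argument exactly along this case division as in the statement.

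In the case $\zeta_7\in L$, I would argue that $K(\zeta_7)=k(\zeta_7)$ is the fixed field of the order-$7$ normal subgroup $\langle g\rangle\lhd\operatorname{Gal}(L/k)$, so $L/k(\zeta_7)$ is a cyclic degree-$7$ Kummer extension; an application of Proposition $4.1$ in \cite{Lor14} then yields $L=k(\zeta_7,\sqrt[7]{m})$ for some $m\in k$, matching the first assertion. The heart of the lemma is the second case $\zeta_7\notin L$. Here I would pass to $\tilde{L}=L(\zeta_7)$ and observe that $[\tilde L:L]=3$, since adjoining $\zeta_7$ forces the degree-$3$ subextension $k(\zeta_7+\zeta_7^6)/k$ hidden inside $k(\zeta_7)$. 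By Kummer theory over $k(\zeta_7)$, I would write $\tilde{L}=k(\zeta_7,\sqrt[7]{\beta_1},\sqrt[7]{\beta_2},\sqrt[7]{\beta_3})$, where the three radicands arise as conjugates of a single $\beta_1$. The key computation is to descend the Kummer generator so that the $\beta_i$ lie in the correct subfield and are genuinely conjugate: this is closely analogous to the descent argument in Lemma \ref{caso5}, where one averages $\sqrt[7]{\alpha}$ over a suitable order-$3$ automorphism and takes a seventh power to land in $k(\zeta_7+\zeta_7^6)$. Here the cyclic structure of $H$ on the order-$7$ part means $k(\beta_1,\beta_2,\beta_3)$ should be the cyclic cubic subextension of $\operatorname{Gal}(L/k)$, and I would verify that $L$ is recovered as the unique index-$3$ normal subextension of $\tilde{L}$.

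The main obstacle I anticipate is controlling the Galois action precisely enough to guarantee both that the $\beta_i$ are conjugate and that $k(\beta_1,\beta_2,\beta_3)/k$ is cyclic (rather than merely a degree-$3$ extension), since this is what distinguishes pair $9$ (with $H\cong\text{C}_7\rtimes\text{C}_3$) from pair $5$ (where $k(\beta_1,\beta_2,\beta_3)=k(\zeta_7)$ is dihedral-type). Concretely, I would track the commutation relations between the order-$2$ generator coming from $\operatorname{Gal}(K/k)$ and the order-$3$ and order-$7$ generators of $H$ inside $\operatorname{Gal}(\tilde{L}/k)$, whose isomorphism type I expect to be the relevant order-$42$ (or order-$126$) group; identifying this group and its normal subgroups pins down which index-$3$ subextension is normal over $k$. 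Once the averaging-and-seventh-power descent places $\beta_1$ in the cyclic cubic field and exhibits its conjugates $\beta_2,\beta_3$, the conclusion $L=$ the unique normal index-$3$ subextension of $k(\zeta_7,\sqrt[7]{\beta_1},\sqrt[7]{\beta_2},\sqrt[7]{\beta_3})$ follows, together with the norm condition $\sqrt[7]{\beta_1\beta_2\beta_3}\in k$ obtained by inspecting the action on the product of the radicals as in Lemma \ref{caso5}.
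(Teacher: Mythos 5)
Your plan matches the paper's proof essentially step for step: split on whether $\zeta_7\in L$, handle the first case by Proposition $4.1$ of \cite{Lor14}, and in the second case pass to $\tilde{L}=L(\zeta_7)$ with $[\tilde{L}:L]=3$, locate the normal cubic subextension $F_0$ of $L/k$ not contained in $k(\zeta_7)$, apply Kummer theory to the cyclic degree-$7$ piece $\tilde{L}/F_0(\zeta_7)$, descend the radicand into $F_0$ by the averaging trick of Lemma \ref{caso5}, and recover $L$ as the unique normal index-$3$ subextension together with $\sqrt[7]{\beta_1\beta_2\beta_3}\in k$. The only nitpick is that Kummer theory is invoked over $F_0(\zeta_7)$ (where $\tilde{L}$ is cyclic of degree $7$) rather than over $k(\zeta_7)$ (where it is a non-abelian degree-$21$ extension), but your subsequent description makes clear you intend exactly this.
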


\begin{proof}	
	If $\zeta_7 \notin L$, we consider the field $\tilde{L}=L(\zeta_7)$, for which $\left[ \tilde{L}:L\right]=3$. There is a normal subextension of order $3$ over $k$ not contained in $k(\zeta_7)$, which we denote by $F_0$. Then $F=F_0(\zeta_7)$ is a subextenion of index $7$ of $\tilde{L}$. By Kummer theory, $\tilde{L}=k(\zeta_7,\sqrt[7]{\beta})$ with $\beta \in F$. As in Lemma \ref{caso5}, we can assume $\beta \in F_0$. The extension $L/k$ is the only normal subextension of degree $42$ of $\tilde{L}/k$. Denote $\beta_1=\beta$ and $\beta_2$ and $\beta_3$ its two conjugates in $\tilde{L}$. Then $\sqrt[7]{\beta_1\beta_2\beta_3}\in k$. 
\end{proof}

\begin{lemma}\label{caso10} Let $L$ be the splitting field for a proper solution to the Galois embedding problem corresponding to pair $(G,H)=(<16,7>,<8,3>)$. Then $k=k(\sqrt{2})$ and $L=k(\sqrt{m},\sqrt[8]{-7\gamma^2})$, where $\gamma\in k(\sqrt{m})=k(i,\sqrt{2})$.
\end{lemma}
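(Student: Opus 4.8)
The plan is to mirror the structure of the proof of Lemma \ref{C8} (the $(\langle 8,1\rangle,\langle 4,1\rangle)$ case), since the pair $(\langle 16,7\rangle,\langle 8,3\rangle)$ is the ``almost diagonal'' analogue obtained by enlarging the previous situation by a $\mathbb{Z}/2$ factor. First I would record the group theory: for the proper solution we have $\operatorname{Gal}(L/k)\simeq\langle 16,7\rangle$ and $\operatorname{Gal}(L/K)\simeq\langle 8,3\rangle$, where $K=k(\sqrt{-7})$, and the generators $g^2sg^3sg^2$ and $g^2sg^5$ of $H$ listed in Table \ref{pairsKlein} act, via $\phi_3$ and the $D_4$-model $C_{D_4}$, as \emph{diagonal} matrices on $\Omega^1$. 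This is precisely why the twist is diagonal after conjugating by $\phi_3$, and it tells me that $L$ should be generated over $K$ by a single radical together with one quadratic parameter.

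Next I would use Kummer theory exactly as in Lemma \ref{C8}. The key observation is that the relevant cyclic part of $H$ has order $8$ and acts by an eighth root of unity, so I set $k_0=k(i,\sqrt{2})$ and $L_0=L\cdot k_0$, and by Kummer theory write $L_0=k_0(\sqrt[8]{-7\gamma^2})$ for some $\gamma\in k_0$, the shape $-7\gamma^2$ being forced by the requirement that $\sqrt{-7}\in L$ (coming from $K=k(\sqrt{-7})\subseteq L$) together with the norm/fixed-field constraints of the $D_4$-action. I would then compute $\operatorname{Gal}(L_0/k)$ as a product of the cyclic Galois group of $L/k$ with $\operatorname{Gal}(k_0/k)$ and run the same fixed-element argument: build a radical expression (an appropriate power of $\sqrt[8]{-7\gamma^2}$ times a Galois conjugate) that is fixed by the order-eight generator and by one of the $\sigma_i$, conclude it lies in a small subfield, and derive that $\sqrt{-7}$ or $\sqrt{2}$ would have to descend to $k$ in a way incompatible with $\tau\in\operatorname{Gal}(L_0/k)$ unless $\sqrt{2}\in k$. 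The conclusion I expect is $k(\sqrt{2})=k$ and $L_0=L$, so that $L=k(\sqrt{m},\sqrt[8]{-7\gamma^2})$ with $\gamma\in k(\sqrt{m})=k(i,\sqrt{2})$, the extra quadratic parameter $\sqrt{m}$ being the piece distinguishing $\langle 16,7\rangle$ from the smaller $\langle 8,1\rangle$ of Lemma \ref{C8}. Here $i\in k(\sqrt m)$ is forced because the diagonalizing matrix $\phi_3$ has entries in $k(i)$.

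The main obstacle will be bookkeeping the extra $\mathbb{Z}/2$ correctly: in Lemma \ref{C8} one has $k(i,\sqrt2)=k$ outright, whereas here only $\sqrt2$ descends and $i,\sqrt{m}$ survive, so I must track which of $i$, $\sqrt2$, $\sqrt{-7}$, $\sqrt{m}$ generate the various quadratic subfields of the elementary abelian $2$-part of $\operatorname{Gal}(L_0/k)$ and verify there is no collapse forcing $\sqrt{-7}\in k(\sqrt m)$ other than the intended one. I would handle this by identifying, inside $\langle 16,7\rangle$, the unique cyclic subgroup of order $8$ fixing $k(\sqrt m)$ and checking its action pins down $\sqrt[8]{-7\gamma^2}$, while the complementary involution contributes $\sqrt m$; the compatibility $k(\sqrt m)=k(i,\sqrt2)$ then follows because $\gamma\in k(\sqrt m)$ and the eighth-root extension already requires $\zeta_8\in k(\sqrt m)$, i.e.\ both $i$ and $\sqrt2$. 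Once the field shape is pinned down, checking that such an $L$ genuinely solves the embedding problem is routine and parallels the end of Lemma \ref{C8}.
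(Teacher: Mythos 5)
Your proposal follows essentially the same route as the paper: the paper simply invokes Lemma \ref{C8} over the quadratic subfield $k(\sqrt m)$ fixed by the cyclic order-$8$ subgroup of $\langle 16,7\rangle$ to get $L=k(\sqrt m,\sqrt[8]{-7\gamma^2})$ with $k(i,\sqrt2)\subseteq k(\sqrt m)$, then writes down generators $\sigma_0,\sigma_1$ of the dihedral group $\operatorname{Gal}(L/k)$ and uses the relations $\sigma_0^8=\sigma_1^2=1$, $\sigma_0\sigma_1=\sigma_1\sigma_0^7$ together with the norm argument of Lemma \ref{C8} to discard the impossible sign cases and conclude $\sqrt2\in k$ and $\gamma\in k$. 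The only caution is that your opening Kummer step over $k_0=k(i,\sqrt2)$ is not immediate, since $\operatorname{Gal}(Lk_0/k_0)$ need not be cyclic a priori, but your later reduction to the unique cyclic order-$8$ subgroup fixing $k(\sqrt m)$ is exactly the fix the paper uses.
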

\begin{proof}
	By lemma \ref{C8}, $L=k(\sqrt{m},\sqrt[8]{-7\gamma^2})$, where $\gamma\in k(\sqrt{m})$ and $k(i,\sqrt{2})\subseteq k(\sqrt{m})$. We will check that $k=k(\sqrt{2})$, $k(i,\sqrt{2})=k(\sqrt{m})$ and $\gamma\in k$.
	The group $\operatorname{Gal}(L/k)=\operatorname{Gal}(L/k(\sqrt{m}))\rtimes\operatorname{Gal}(k(\sqrt{m})/k)=\langle \sigma_0,\sigma_1\rangle$ where
	$$
	\sigma_0:\,\zeta_{8},\sqrt{m},\sqrt[8]{-7\gamma^2}\rightarrow\zeta_{8},\sqrt{m},\zeta_8\sqrt[8]{-7\gamma^2},\,\sigma_1:\,\zeta_{8}^\epsilon,\sqrt{m},\sqrt[8]{-7\gamma^2}\rightarrow\zeta_{8},-\sqrt{m},\sqrt[8]{-7\bar{\gamma}^2}.
	$$
	Here, the conditions on $A$ are obtained from the relations $\sigma_0^8=\sigma_1^2=1$ and $\sigma_0\sigma_1=\sigma_1\sigma_0^7$. Now, we can discard cases $(\epsilon_1,\epsilon_2)=(1,1),(1,-1),(-1,-1)$ by proceeding as in the proof of lemma \ref{C8}. Therefore, $\sqrt{2}\in k$, $i\not\in k$, and $L=k(i\sqrt[8]{-7\gamma^2})$ for some $\gamma \in k(i)$. Since $A=1$, we can take $\gamma\in k$.
\end{proof}

To deal with the last and most complicated case we proceed as follows: let us first define the matrix
$$
E=\left(\begin{array}{ccccccc} 1 & \zeta & \zeta^2 & \zeta^3 & \zeta^4 & \zeta^5 & \zeta^6 \\
1 & \zeta^4 & \zeta & \zeta^5 & \zeta^2 & \zeta^6 & \zeta^3 \\
1 & \zeta^2 & \zeta ^4 & \zeta^6 & \zeta & \zeta^3 & \zeta^5
\end{array}\right),
$$
and the matrices $\mathcal{E}=(E\mid\bar{E})$ and $\mathcal{F}=(\frac{F}{\bar{F}})$, where $F=\bar{E}^t$.

We now fix an embedding $\iota:\, <168,42>\hookrightarrow S_7\times S_7$ given by
$
\iota(g)=(1,2,3,4,5,6,7)_1(1,2,3,4,5,6,7)_2,$ $\iota(h)=(2,3,5)_1(4,7,6)_1(2,3,5)_2(4,7,6)_2$ and $\iota(s)=(2,3)_1(4,7)_1(2,3)_2(4,7)_2.$

Then, the following nice property holds: given $\alpha_0\in\text{Aut}(C_K)$, the matrix $\mathcal{E}\alpha_0\mathcal{F}$ is the $14\times14$ matrix given by the permutation $\iota(\alpha_0)$.

\begin{lemma}\label{caso11}
	All the splitting fields to solutions to the Galois embedding problem corresponding to the pair $(G,H)=(<336,208>,<168,42>)$ can be written as $k(\alpha_1,...,\alpha_7,\beta_1,...,\beta_7)$ where $\bar{\alpha}_i=\beta_i$ and the $\alpha_i$ are the $7$ roots of a polynomial of degree $7$ with coefficients in $K=k(\sqrt{-7})$ and such that the action of $\text{Gal}(L/K)$ on the $\alpha_i$ and $\beta_i$ is given by the embedding $\iota:\,<168,42>\hookrightarrow S_7\times S_7$. Complex conjugation acts by switching the $\alpha_i$ and $\beta_i$. 
\end{lemma}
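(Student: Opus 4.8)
The plan is to analyze the Galois embedding problem for the pair $(G,H)=(<336,208>,<168,42>)$ by exploiting the embedding $\iota:\,<168,42>\hookrightarrow S_7\times S_7$ together with the explicit matrices $\mathcal{E}$, $\mathcal{F}$ introduced just before the statement. The group $<168,42>\simeq\operatorname{PSL}_2(\mathbb{F}_7)$ acts on the automorphism group $\Aut(C_K)$, and the key structural fact I would use is the displayed ``nice property'': for $\alpha_0\in\Aut(C_K)$, the $14\times14$ matrix $\mathcal{E}\alpha_0\mathcal{F}$ is the permutation matrix of $\iota(\alpha_0)$. This linearizes the action and converts the problem of realizing $\operatorname{Gal}(L/K)$ inside $\Aut(C_K)$ into a problem of realizing the permutation representation $\iota$ on a set of $14$ field elements that split into two conjugate blocks of size $7$.

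First I would consider a proper solution with splitting field $L$ and restrict attention to the subextension $L/K$, where $K=k(\sqrt{-7})$, so that $\operatorname{Gal}(L/K)\simeq<168,42>$. The aim is to produce $7$ elements $\alpha_1,\ldots,\alpha_7\in L$ on which $\operatorname{Gal}(L/K)$ acts through the first factor of $\iota$ as $S_7$-permutations (in fact as the image of $<168,42>$ in $S_7$), and their complex conjugates $\beta_i=\bar{\alpha}_i$ carried by the second factor. The natural source of such elements is the matrix $\mathcal{E}$ applied to a vector built from a twisting cocycle: because $\mathcal{E}\alpha_0\mathcal{F}$ realizes $\iota(\alpha_0)$ as a genuine permutation of coordinates, the entries of a suitable $\mathcal{E}$-transform of the cocycle data will be permuted exactly according to $\iota$. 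I would then argue that since the $\alpha_i$ are permuted transitively by $\operatorname{Gal}(L/K)$ via a degree-$7$ action, they are the $7$ roots of a degree-$7$ polynomial with coefficients in $K$ (the coefficients being symmetric functions fixed by $\operatorname{Gal}(L/K)$, hence in $K$), and that $L=K(\alpha_1,\ldots,\alpha_7,\beta_1,\ldots,\beta_7)$.

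Next I would handle the role of complex conjugation, which accounts for the extra factor of $2$ between $<336,208>$ and $<168,42>$ and for the quadratic extension $K/k$. Since $\operatorname{Gal}(L/k)/\operatorname{Gal}(L/K)\simeq\operatorname{Gal}(K/k)$ is generated by the lift of complex conjugation $\tau$, and since $\tau$ fixes $k$ but sends $\sqrt{-7}\mapsto-\sqrt{-7}$, I would check that conjugation interchanges the matrices $E$ and $\bar E$, hence interchanges the two blocks of $\mathcal{E}$, and therefore swaps the $\alpha_i$ with the $\beta_i$. This is precisely the required compatibility $\bar\alpha_i=\beta_i$ together with the statement that $\operatorname{Gal}(L/K)$ acts through $\iota$ on the $14$ elements. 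The main obstacle I anticipate is verifying that the two $7$-element blocks are genuinely conjugate (rather than each defined over $K$) and that complex conjugation acts by the clean block-swap rather than by some twisted permutation combined with an automorphism; this requires tracking how $\tau$ interacts with the chosen generators $g,h,s$ and with the fixed embedding $\iota$, and confirming that the resulting $\operatorname{Gal}(L/k)$ indeed has the structure $<336,208>$ and not some other extension. Once these compatibilities are established, the description $L=k(\alpha_1,\ldots,\alpha_7,\beta_1,\ldots,\beta_7)$ with the stated action follows, and conversely any such configuration yields a proper solution by running the linearization backwards through $\mathcal{F}$.
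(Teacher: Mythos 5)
There is a genuine gap, and it sits at the heart of your argument: the step where you ``produce'' the elements $\alpha_1,\ldots,\alpha_7$ from ``a suitable $\mathcal{E}$-transform of the cocycle data.'' A cocycle for this embedding problem is a map $\operatorname{Gal}(L/k)\to\Aut(C_K)$, i.e.\ a collection of $3\times 3$ matrices; it is not a vector of elements of $L$ to which $\mathcal{E}$ can be applied. The property that $\mathcal{E}\alpha_0\mathcal{F}$ is the permutation matrix of $\iota(\alpha_0)$ linearizes the \emph{action} of $\Aut(C_K)$ in a $14$-dimensional representation, but it does not manufacture generators of $L$; so your ``natural source'' of the $\alpha_i$ is not actually a construction. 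The paper keeps these two issues separate: Lemma \ref{caso11} is proved by pure Galois theory and group theory (take the fixed field of an index-$7$ subgroup of $\operatorname{Gal}(L/K)\simeq\,<168,42>\,\simeq\PSL_2(\mathbb{F}_7)$, let $\alpha_1$ generate it and $\alpha_1,\ldots,\alpha_7$ be its conjugates; simplicity of the group makes the degree-$7$ action faithful, so these generate $L$ over $K$), and only afterwards, in Proposition \ref{prop11}, are $\mathcal{E}$, $M$ and $\mathcal{F}$ used to assemble the twist isomorphism out of the already-constructed $\alpha_i$.

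The second missing ingredient is the identification of the resulting permutation action with the \emph{fixed} embedding $\iota$. Even once you have seven conjugate elements permuted transitively by $\operatorname{Gal}(L/K)$, you only know a priori that this gives \emph{some} transitive degree-$7$ permutation representation of $<168,42>$; the statement requires it to be the one prescribed by $\iota$ on the generators $g,h,s$. The paper closes this with the group-theoretic fact that all subgroups of $S_7$ isomorphic to $<168,42>$ are conjugate, so that after renaming the roots the action agrees with $\iota$. Your proposal never invokes this (or any substitute for it), and without it the claimed compatibility between the field-theoretic action and the matrix identity $\mathcal{E}\alpha_0\mathcal{F}=\iota(\alpha_0)$ --- which is exactly what Proposition \ref{prop11} needs --- is not established. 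Your discussion of complex conjugation swapping the two blocks of $\mathcal{E}$ and hence the $\alpha_i$ with the $\beta_i$ is reasonable and consistent with the intended picture, but it rests on the two unestablished points above.
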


\begin{proof}
	There is a single subgroup up to conjugation of $S_7$ isomorphic to $<168,42>$, so, renaming, if necessary, the roots $\alpha_i$, we get the wanted action of $\text{Gal}(L/K)$.
\end{proof}

Given such a solution, we now define the matrix
$$
\Phi=\left(\begin{array}{ccccccc}1 & \alpha_1 & \alpha_{1}^{2} & \alpha_{1}^{3}& \alpha_{1}^{4}& \alpha_{1}^{5}& \alpha_{1}^{6}\\
1 & \alpha_2 & \alpha_{2}^{2} & \alpha_{2}^{3}& \alpha_{2}^{4}& \alpha_{2}^{5}& \alpha_{2}^{6}\\
1 & \alpha_3 & \alpha_{3}^{2} & \alpha_{3}^{3}& \alpha_{3}^{4}& \alpha_{3}^{5}& \alpha_{3}^{6}\\
1 & \alpha_4 & \alpha_{4}^{2} & \alpha_{4}^{3}& \alpha_{4}^{4}& \alpha_{4}^{5}& \alpha_{4}^{6}\\
1 & \alpha_5 & \alpha_{5}^{2} & \alpha_{5}^{3}& \alpha_{5}^{4}& \alpha_{5}^{5}& \alpha_{5}^{6}\\
1 & \alpha_6 & \alpha_{6}^{2} & \alpha_{6}^{3}& \alpha_{6}^{4}& \alpha_{6}^{5}& \alpha_{6}^{6}\\
1 & \alpha_7 & \alpha_{7}^{2} & \alpha_{7}^{3}& \alpha_{7}^{4}& \alpha_{7}^{5}& \alpha_{7}^{6}\end{array}\right),
$$
and we construct
$$
M:=\left(\begin{array}{c|c} \Phi & \sqrt{-7}\Phi \\
\hline
\bar{\Phi} & -\sqrt{-7}\bar{\Phi}\end{array}\right).
$$

\begin{proposition}\label{prop11}
	The cocycle $\xi_{\sigma}$ is given by the twist $\phi:\,C'\rightarrow C_K$ defined by
	$$
	\phi=\mathcal{E}M\mathcal{F}.
	$$
\end{proposition}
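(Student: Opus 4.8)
The plan is to check directly the defining cocycle relation $\xi_\sigma=\phi\cdot{}^\sigma\phi^{-1}$, showing that $\phi=\mathcal{E}M\mathcal{F}$ is an isomorphism $C'\to C_K$ with $C'$ defined over $k$ and field of definition $L$, and that $\sigma\mapsto\xi_\sigma$ is the class attached to the pair $(G,H)=(\langle 336,208\rangle,\langle 168,42\rangle)$. Two clean identities for the flanking matrices do the work. Since $F=\bar E^t$ and the rows of $E$ carry the quadratic-residue exponents $\{1,2,4\}$ while those of $\bar E$ carry the non-residues $\{3,5,6\}$, orthogonality of the characters of $\mathbb{Z}/7\mathbb{Z}$ gives $E\bar E^t=7I_3$ and hence $\mathcal{E}\mathcal{F}=E\bar E^t+\bar E E^t=14\,I_3$. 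Together with the granted property $\mathcal{F}\alpha_0\mathcal{E}=\iota(\alpha_0)$ (the $14\times14$ permutation matrix of $\iota(\alpha_0)$, for $\alpha_0\in\operatorname{Aut}(C_K)$), this yields the intertwining relation
\[
\mathcal{E}\,\iota(\alpha_0)\,\mathcal{F}=(\mathcal{E}\mathcal{F})\,\alpha_0\,(\mathcal{E}\mathcal{F})=196\,\alpha_0,
\]
so that, projectively, conjugation by the pair $(\mathcal{E},\mathcal{F})$ carries the $14$-dimensional permutation representation of $G$ back into $\operatorname{Aut}(C_K)$. This is the mechanism that will turn permutation data into honest automorphisms.

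Next I would read off the Galois action on $M$ from Lemma \ref{caso11}. For $\sigma\in\operatorname{Gal}(L/K)$ the roots $\alpha_i$ and $\beta_i$ are permuted by the two factors of $\iota(\sigma)$ while $\sqrt{-7}$ is fixed, so ${}^\sigma\Phi$ and ${}^\sigma\bar\Phi$ are the corresponding row permutations of $\Phi$ and $\bar\Phi$, giving ${}^\sigma M=\Pi_\sigma M$ with $\Pi_\sigma$ the block-diagonal permutation matrix $\iota(\sigma)$. For complex conjugation $\tau$ the simultaneous effect of $\alpha_i\leftrightarrow\beta_i$ and $\sqrt{-7}\mapsto-\sqrt{-7}$ is exactly the block swap $\left(\begin{smallmatrix}0&I_7\\I_7&0\end{smallmatrix}\right)$, so that ${}^\tau M=\left(\begin{smallmatrix}0&I_7\\I_7&0\end{smallmatrix}\right)M$ (the two sign changes cancelling). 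Hence for every $\sigma\in\operatorname{Gal}(L/k)$ one has ${}^\sigma M=\Pi_\sigma M$, where $\Pi_\sigma$ is the $14\times14$ permutation matrix recording the image of $\sigma$ under $G\hookrightarrow S_7\wr\mathbb{Z}/2\mathbb{Z}$.

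The remaining input is the cyclotomic action on $\mathcal{E}$ and $\mathcal{F}$. Writing $\sigma(\zeta)=\zeta^{t}$ and absorbing the factor $t$ into the column exponent $j\mapsto tj$ shows ${}^\sigma\mathcal{E}=\mathcal{E}R_\sigma$ and ${}^\sigma\mathcal{F}=R_\sigma^{-1}\mathcal{F}$ for a $14\times14$ permutation matrix $R_\sigma$. Substituting the three transformation laws into $\phi\cdot{}^\sigma\phi^{-1}$ and collapsing the non-square factors by means of $\mathcal{E}\mathcal{F}=14\,I_3$ leaves, after projectivization, an expression of the shape $\mathcal{E}\,\Pi\,\mathcal{F}$ with $\Pi\in\iota(G)$ a permutation matrix; by the intertwining relation this is an automorphism $\alpha_\sigma\in\operatorname{Aut}(C_K)$, and tracking the identifications shows that $\sigma\mapsto\alpha_\sigma$ is precisely the cocycle prescribed by $(G,H)$. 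The same bookkeeping simultaneously forces the $\zeta$-dependence to cancel, so that $\phi$ is projectively $L$-rational and $C'$ descends to $k$ with field of definition $L$.

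The hard part is exactly this last interlocking. Three a priori independent pieces of the Galois action --- the root permutation $\Pi_\sigma$, the complex-conjugation block swap, and the cyclotomic column permutation $R_\sigma$ --- must be shown to fit together so that (i) the $\zeta$-dependence of $\phi$ disappears, which is genuinely needed since $\operatorname{Gal}(L/K)\cong\PSL_2(\mathbb{F}_7)$ is perfect and therefore $\zeta\notin L$ in general, and (ii) the surviving permutation $\Pi$ lands in $\iota(G)$ rather than merely in $S_7\wr\mathbb{Z}/2\mathbb{Z}$, so that $\phi\cdot{}^\sigma\phi^{-1}$ is a bona fide automorphism realizing the correct cohomology class. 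Everything else reduces to the two displayed identities and the explicit description of $\operatorname{Gal}(L/k)$ from Lemma \ref{caso11}.
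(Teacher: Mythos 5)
Your proposal follows essentially the same route as the paper's (very terse) proof: the two ingredients are that $M\cdot{}^{\sigma}M^{-1}$ is the block permutation matrix $\iota(\xi_\sigma)\oplus\iota(\bar\xi_\sigma)$ --- your ${}^{\sigma}M=\Pi_\sigma M$, including the correct check that the sign change of $\sqrt{-7}$ and the swap $\alpha_i\leftrightarrow\beta_i$ combine to give the block swap under complex conjugation --- and that conjugation by the pair $(\mathcal{E},\mathcal{F})$ carries $\iota(G)$ back into $\operatorname{Aut}(C_K)\subset\operatorname{PGL}_3$. Your computation $\mathcal{E}\mathcal{F}=14\,I_3$ by character orthogonality and the resulting relation $\mathcal{E}\,\iota(\alpha_0)\,\mathcal{F}\propto\alpha_0$ are exactly the content hidden in the paper's one-line conclusion; note only that the quoted identity $\mathcal{F}\alpha_0\mathcal{E}=\iota(\alpha_0)$ cannot hold literally (the left-hand side has rank at most $3$ while a $14\times14$ permutation matrix has rank $14$), so the usable form of the ``nice property'' is the intertwining $\mathcal{E}\,\iota(\alpha_0)=\alpha_0\,\mathcal{E}$ up to scalar, which is what you in effect use. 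You go beyond the paper in one respect: you explicitly track the cyclotomic action on $\mathcal{E}$ and $\mathcal{F}$ via ${}^{\sigma}\mathcal{E}=\mathcal{E}R_\sigma$ and ${}^{\sigma}\mathcal{F}=R_\sigma^{-1}\mathcal{F}$, a point the paper suppresses even though $\zeta_7\notin L$ in general, and this is needed for $\phi$ to be projectively $L$-rational. The final assembly (inverting $\mathcal{E}N\mathcal{F}$ through the projector $\mathcal{F}\mathcal{E}/14$ and checking the surviving permutation lies in $\iota(G)$) is only sketched in your write-up, but the paper's own proof does no more, so I regard this as the same argument presented with more care rather than a gap relative to the paper.
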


\begin{proof}
	The matrix $M$ previously defined satisfies
	$$
	M\cdot^{\sigma}M^{-1}=\left(\begin{array}{cc}\iota(\xi_{\sigma}) & 0\\0 & \iota(\bar{\xi}_{\sigma})\end{array}\right).
	$$
	Since $\mathcal{E}\alpha_0\mathcal{F}$ is the matrix corresponding to the permutation $\iota(\alpha_0)$ the result follows.
\end{proof}

Now assume $\sqrt{-7}\in k$. We obtain the following possibilities for the pairs $(G,H)$, where $G=H$ because the group $\operatorname{Gal}(K/k)$ is trivial:

\vskip 0.5truecm

\begin{center}
	\begin{tabular}{|c|c|c|c|c|c|c|c|}
		\hline
		& $ \operatorname{ID}( G) $ \rule[0.4cm]{0cm}{0cm} & $\operatorname{gen}(G)$&$n_{(G,G)}$&& $ \operatorname{ID}( G) $ \rule[0.4cm]{0cm}{0cm} & $\operatorname{gen}(G)$&$n_{(G,G)}$ \tabularnewline
		\hline
		$1$ & $<1,1>$ & $ 1 $&$ 1$&
		$9$ & $<21,1>$ &  $ g,\,h $&$2$\tabularnewline
		\hline
		$2$ & $<2,1>$ &  $ s $&$1$&
		$10$ & $<8,3>$ &  $ g^2sg^3sg^2,\,g^2sg^5 $&$2$\tabularnewline
		\hline
		$3$ & $<3,1>$  & $ h $&$1$&
		$11$ & $<168,42>$ &  $ s,\,g,\,h $&$2$\tabularnewline
		\hline
		$4$ & $<7,1>$  & $ g $&$2$&
		$12$ & $<12,3>$ & $ h,\,sg^2sg^5 $&$1$\tabularnewline
		\hline
		$5$ & $<4,2>$  & $hsh^2,\,g^2sg^5$&$1$&
		$13$ & $<12,3>$ & $ h,\,sg^5sg^2 $&$1$\tabularnewline
		\hline
		$6$ & $<4,2>$  &$hsh^2,\,g^5sg^2$ &$1$&
		$14$ & $<24,12>$ & $ s,\,h,\,g^2sg^5 $&$1$\tabularnewline
		\hline
		$7$ & $<4,1>$  & $ g^2sg^3sg^2 $&$1$&
		$15$ & $<24,12>$ & $ s,\,h,\,g^5sg^2 $&$1$\tabularnewline
		\hline
		$8$ & $<6,1>$ &  $ h,\,s $&$1$&\multicolumn{4}{c|}{} \tabularnewline
		\hline
		
	\end{tabular}
\end{center}

\vspace{1.2mm}

All cases except $5,6$ and $12-15$ had already appeared when $\sqrt{-7}\notin k$. We only get $6$ new cases. Since $\sqrt{-7}\in k$, we can work with the model $C_{S_4}:\,x^4+y^4+z^4+3\epsilon(x^2y^2+y^2z^2+z^2x^2)=0$ or the conjugated one $\bar{C}_{S_4}:\,x^4+y^4+z^4+3\bar{\epsilon}(x^2y^2+y^2z^2+z^2x^2)=0$. 

For cases $5$ and $6$, we find the new twists with field of definition $L=k(\sqrt{a},\sqrt{b})$ where $\alpha,\beta\in k^{*}/(k^*)^2$ given by 
$$
\phi=\left(\begin{array}{ccc}\sqrt{a}& 0 & 0\\ 0& \sqrt{b} & 0\\ 0& 0 & 1 \end{array}\right):\,C'\rightarrow C_{S_4}\text{ or }\bar{C}_{S_4}.
$$

For cases $12-15$, we obtain $L=k(\sqrt{\alpha},\sqrt{\beta},\sqrt{\gamma})$, where $\alpha,\beta,\gamma$ are the three roots of a polynomial of degree $3$ with coefficients in $k$, whose splitting field has Galois group over $k$ isomorphic to $\text{C}_3$ or $S_3$ respectively:
$$
\phi=\left(\begin{array}{ccc}\sqrt{\alpha}&\alpha\sqrt{\alpha}&\alpha^2\sqrt{\alpha}\\ \sqrt{\beta}&\beta\sqrt{\beta}&\beta^2\sqrt{\beta}\\ \sqrt{\gamma}&\gamma\sqrt{\gamma}&\gamma^2\sqrt{\gamma}\end{array}\right):\,C'\rightarrow C_{S_4}\text{ or }\bar{C}_{S_4}.
$$

\newpage
\section*{Tables}\label{Tables}

\small

\begin{table}[H]
	\caption{Automorphisms Henn classification}
	\label{aut}
	\begin{center}
		\begin{tabular}{|c|c|c|c|c|c|}
			\hline 
			&&&&&\\
			Case & $\text{Aut}\left(C\right)$ & Generators in $\operatorname{PGL}_3\left(\mathbb{C}\right)$&Case & $\text{Aut}\left(C\right)$ & Generators in $\operatorname{PGL}_3\left(\mathbb{C}\right)$\tabularnewline
			&&&&&\\
			\hline 
			&&&&&\\
			I & $\text{C}_{2}$ & {\footnotesize $\left(\begin{array}{ccc}
				-1 & 0 & 0\\
				0 & 1 & 0\\
				0 & 0 & 1
				\end{array}\right)$}& II & $V_{4}$ & {\footnotesize $\left(\begin{array}{ccc}
				-1 & 0 & 0\\
				0 & 1 & 0\\
				0 & 0 & 1
				\end{array}\right)$, $\left(\begin{array}{ccc}
				1 & 0 & 0\\
				0 & -1 & 0\\
				0 & 0 & 1
				\end{array}\right)$}\tabularnewline
			&&&&&\\
			\hline 
			&&&&&\\
			III & $\text{C}_{3}$ & {\footnotesize $\left(\begin{array}{ccc}
				1 & 0 & 0\\
				0 & 1 & 0\\
				0 & 0 & \zeta_3
				\end{array}\right)$ }& IV & $S_{3}$ & {\footnotesize $\left(\begin{array}{ccc}
				0 & 1 & 0\\
				1 & 0 & 0\\
				0 & 0 & 1
				\end{array}\right)$, $\left(\begin{array}{ccc}
				\zeta_3 & 0 & 0\\
				0 & \zeta_3^{2} & 0\\
				0 & 0 & 1
				\end{array}\right)$}\tabularnewline
			&&&&&\\
			\hline 
			&&&&&\\
			V & $D_{4}$ & {\footnotesize $\left(\begin{array}{ccc}
				0 & 1 & 0\\
				1 & 0 & 0\\
				0 & 0 & 1
				\end{array}\right)$, $\left(\begin{array}{ccc}
				i & 0 & 0\\
				0 & -i & 0\\
				0 & 0 & 1
				\end{array}\right)$}& VI & $\text{C}_{6}$ & {\footnotesize $\left(\begin{array}{ccc}
				-1 & 0 & 0\\
				0 & 1 & 0\\
				0 & 0 & \zeta_3
				\end{array}\right)$}\tabularnewline
			&&&&&\\
			\hline 
			&&\multicolumn{4}{c|}{}\\
			VII & $\text{GAP}\left(16,13\right)$ & \multicolumn{4}{c|}{\footnotesize $\left(\begin{array}{ccc}
				-1 & 0 & 0\\
				0 & 1 & 0\\
				0 & 0 & 1
				\end{array}\right),\,\left(\begin{array}{ccc}
				i & 0 & 0\\
				0 & -i & 0\\
				0 & 0 & 1
				\end{array}\right),\,\left(\begin{array}{ccc}
				0 & 1 & 0\\
				1 & 0 & 0\\
				0 & 0 & 1
				\end{array}\right)$}\tabularnewline
			&&\multicolumn{4}{c|}{}\\
			\hline
			&&&&&\\
			VIII & $S_{4}$ & {\footnotesize $\left(\begin{array}{ccc}
				0 & 0 & 1\\
				1 & 0 & 0\\
				0 & 1 & 0
				\end{array}\right),\,\left(\begin{array}{ccc}
				0 & -1 & 0\\
				1 & 0 & 0\\
				0 & 0 & 1
				\end{array}\right)$}& IX & $\text{C}_{9}$ & {\footnotesize $\left(\begin{array}{ccc}
				\zeta_3 & 0 & 0\\
				0 & 1 & 0\\
				0 & 0 & \zeta_9
				\end{array}\right)$} \tabularnewline
			&&&&&\\
			\hline 
			&&\multicolumn{4}{c|}{}\\
			X & $\operatorname{GAP}\left(48,33\right)$ & \multicolumn{4}{c|}{\footnotesize $\left(\begin{array}{ccc}
				\frac{i\zeta_3^{2}\sqrt{3}}{3} & 0 & \frac{i\zeta_3\sqrt{3}}{3}\\
				0 & \zeta_3^{2} & 0\\
				\frac{2\sqrt{3}\zeta_3^{2}}{3} & 0 & \frac{-i\zeta_3\sqrt{3}}{3}
				\end{array}\right),\,\left(\begin{array}{ccc}
				\frac{\zeta_3\sqrt{3}}{3} & 0 & \frac{\zeta_3^{2}\sqrt{3}}{3}\\
				0 & \zeta_3 & 0\\
				\frac{2\sqrt{3}\zeta_3}{3} & 0 & \frac{-\sqrt{3}\zeta_3^{2}}{3}
				\end{array}\right)$}\tabularnewline
			&&\multicolumn{4}{c|}{}\\
			\hline 
			&&\multicolumn{4}{c|}{}\\
			XI & $\text{GAP}\left(96,64\right)$ & \multicolumn{4}{c|}{\footnotesize $\left(\begin{array}{ccc}
				0 & 0 & 1\\
				1 & 0 & 0\\
				0 & 1 & 0
				\end{array}\right),\,\left(\begin{array}{ccc}
				0 & -i & 0\\
				1 & 0 & 0\\
				0 & 0 & i
				\end{array}\right)$}\tabularnewline
			&&\multicolumn{4}{c|}{}\\
			\hline 
			&&\multicolumn{4}{c|}{}\\
			XII & $\operatorname{PSL}_2\left(\mathbb{F}_7\right)$ & \multicolumn{4}{c|}{\footnotesize $\alpha_1=\frac{-1}{\sqrt{-7}}\left(\begin{array}{ccc}
				\zeta_7-\zeta_7^{6} & \zeta_7^{2}-\zeta_7^{5} & \zeta_7^{4}-\zeta_7^{3}\\
				\zeta_7^{2}-\zeta_7^{5} & \zeta_7^{4}-\zeta_7^{3} & \zeta_7-\zeta_7^{6}\\
				\zeta_7^{4}-\zeta_7^{3} & \zeta_7-\zeta_7^{6} & \zeta_7^{2}-\zeta_7^{5}
				\end{array}\right),\,\alpha_2=\left(\begin{array}{ccc}
				0 & 1 & 0\\
				1 & 0 & 0\\
				0 & 1 & 0
				\end{array}\right),\,\alpha_3=\left(\begin{array}{ccc}
				\zeta_7^{4} & 0 & 0\\
				0 & \zeta_7^{2} & 0\\
				0 & 0 & \zeta_7
				\end{array}\right)$ }\tabularnewline
			&&\multicolumn{4}{c|}{}\\
			\hline 
		\end{tabular}
	\end{center}
\end{table}

\newpage

\begin{table}[H]
	\caption{Automorphisms Modified Henn classification}
	\label{autM}
	\begin{center}
		\begin{tabular}{|c|c|c|}
			\hline 
			Case & $\text{Aut}\left(C\right)$ & Generators in $\operatorname{PGL}_3\left(\mathbb{C}\right)$\tabularnewline
			\hline 
			&&\\
			II & $V_{4}$ & {\footnotesize $\varphi^{-1}\left(\begin{array}{ccc}
				-1 & 0 & 0\\
				0 & 1 & 0\\
				0 & 0 & 1
				\end{array}\right)\varphi$, $\varphi^{-1}\left(\begin{array}{ccc}
				1 & 0 & 0\\
				0 & -1 & 0\\
				0 & 0 & 1
				\end{array}\right)\varphi,\text{ where }\varphi=\left(\begin{array}{ccc}
				1 & \alpha & \alpha^2\\
				1 & \beta & \beta^2\\
				1 & \gamma & \gamma^2
				\end{array}\right)$}\tabularnewline
			&&\\
			\hline 
			&&\\ 
			IV & $S_{3}$ & {\footnotesize $\left(\begin{array}{ccc}
				0 & 1 & 0\\
				1 & 0 & 0\\
				0 & 0 & 1
				\end{array}\right)$, $\left(\begin{array}{ccc}
				\zeta_3 & 0 & 0\\
				0 & \zeta_{3}^{2} & 0\\
				0 & 0 & 1
				\end{array}\right)$}\tabularnewline
			&&\\
			\hline 
			&&\\
			V & $D_{4}$ & {\footnotesize $\left(\begin{array}{ccc}
				0 & \sqrt[4]{a} & 0\\
				1/\sqrt[4]{a} & 0 & 0\\
				0 & 0 & 1
				\end{array}\right)$, $\left(\begin{array}{ccc}
				i & 0 & 0\\
				0 & -i & 0\\
				0 & 0 & 1
				\end{array}\right)$}\tabularnewline
			&&\\
			\hline 
			&&\\
			VI & $\text{C}_{6}$ & {\footnotesize $\left(\begin{array}{ccc}
				-1 & 0 & 0\\
				0 & 1 & 0\\
				0 & 0 & \zeta_3
				\end{array}\right)$}\tabularnewline
			&&\\
			\hline 
			&&\\
			VII & $\text{GAP}\left(16,13\right)$ & {\footnotesize $\left(\begin{array}{ccc}
				-1 & 0 & 0\\
				0 & 1 & 0\\
				0 & 0 & 1
				\end{array}\right)$, $\left(\begin{array}{ccc}
				i & 0 & 0\\
				0 & -i & 0\\
				0 & 0 & 1
				\end{array}\right)$, $\left(\begin{array}{ccc}
				0 & \sqrt[4]{a} & 0\\
				1/\sqrt[4]{a} & 0 & 0\\
				0 & 0 & 1
				\end{array}\right)$}\tabularnewline
			&&\\
			\hline 
		\end{tabular}
	\end{center}
\end{table}

\newpage
\begin{table}[H]
	\caption{Pairs (G,H) for diagonal twists}
	\label{GHdiagonal}
	\begin{center}
		\begin{tabular}{|c|c|c|c|c|c|c|c|c|c|c|c|}
			\hline
			\multicolumn{12}{|c|}{Diagonal twists}\tabularnewline
			\hline
			\hline
			& $ \operatorname{ID}( G) $ \rule[0.4cm]{0cm}{0cm} & $ \operatorname{ID}(H)$ & $\operatorname{gen}(H)$ & $h$& $n_{(G,H)}$& & $ \operatorname{ID}( G) $ \rule[0.4cm]{0cm}{0cm} & $ \operatorname{ID}(H)$ & $\operatorname{gen}(H)$ & $h$& $n_{(G,H)}$\tabularnewline
			\hline
			$1$ & $<2,1>$ & $<1,1>$ & $1$ & $1$& $1$&$8$ & $<8,3>$ & $<4,1>$ & $t^{3}utu$ & $1$& $2$\tabularnewline
			\hline
			$2$ & $<2,1>$ & $<1,1>$ & $1$ & $t^{3}utu$& $1$&$9$ & $<8,3>$ & $<4,1>$ & $t^{3}utu$ & $u$& $2$\tabularnewline
			\hline
			$3$ & $<4,2>$ & $<2,1>$ & $t^{2}$ & $1$& $1$&$10$ & $<8,3>$ & $<4,1>$ & $t^{3}utu^{3}$ & $1$& $2$\tabularnewline
			\hline
			$4$ & $<4,2>$ & $<2,1>$ & $t^{2}$ & $u$& $1$&$11$ & $<8,3>$ & $<4,1>$ & $t^{3}utu^{3}$ & $u$& $2$\tabularnewline
			\hline
			$5$ & $<4,2>$ & $<2,1>$ & $t^{2}$ & $t^{3}utu$& $1$&$12$ & $<16,11>$ & $<8,2>$ & $t^{3}utu,u^{2}$ & $1$& $8$\tabularnewline
			\hline
			$6$ & $<8,5>$ & $<4,2>$ & $t^{2},u^{2}$ & $1$& $1$&$13$ & $<16,11>$ & $<8,2>$ & $t^{3}utu,u^{2}$ & $u$& $16$\tabularnewline
			\hline
			$7$ & $<8,5>$ & $<4,2>$ & $t^{2},u^{2}$ & $u$& $3$&$14$ & $<32,34>$ & $<16,2>$ & $u,t^{3}ut$ & $1$& $64$\tabularnewline
			\hline
		\end{tabular}
	\end{center}
\end{table}

\vspace{-3mm}

\begin{table}[H]
	\caption{Pairs (G,H) for almost-diagonal twists}
	\label{GHalmost}
	\begin{center}
		\begin{tabular}{|c|c|c|c|c|c|c|c|c|c|c|c|}
			\hline
			\multicolumn{12}{|c|}{Almost-diagonal twists}\tabularnewline
			\hline
			\hline
			& $ \operatorname{ID}( G) $ \rule[0.4cm]{0cm}{0cm} & $ \operatorname{ID}(H)$ & $\operatorname{gen}(H)$ & $h$& $n_{(G,H)}$& & $ \operatorname{ID}( G) $ \rule[0.4cm]{0cm}{0cm} & $ \operatorname{ID}(H)$ & $\operatorname{gen}(H)$ & $h$& $n_{(G,H)}$\tabularnewline
			\hline
			$1$ & $<2,1>$ & $<1,1>$ & $1$ & $u^{2}t$& $1$&
			$19$ & $<16,6>$ & $<8,2>$ & $t^{3}utu,u^{2}$ & $ut$& $2$\tabularnewline
			\hline
			$2$ & $<4,2>$ & $<2,1>$ & $t^{2}$ & $u^{2}t$& $1$&
			$20$ & $<16,13>$ & $<8,2>$ & $t^{3}utu,u^{2}$ & $u^{2}t$& $2$\tabularnewline
			\hline
			$3$ & $<4,1>$ & $<2,1>$ & $t^{2}$ & $t$& $1$&
			$21$ & $<16,8>$ & $<8,1>$ & $u^{2}tu$ & $u$& $2$\tabularnewline
			\hline
			$4$ & $<4,2>$ & $<2,1>$ & $u^{2}t$ & $1$& $2$&
			$22$ & $<16,7>$ & $<8,3>$ & $t^{3}utu^{3},u^{2}t$ & $u$& $2$\tabularnewline
			\hline
			$5$ & $<4,2>$ & $<2,1>$ & $u^{2}t$ & $t^{3}utu$& $2$&
			$23$ & $<16,8>$ & $<8,4>$ & $t^{3}utu^{3},t$ & $u$& $2$\tabularnewline
			\hline
			$6$ & $<8,4>$ & $<4,1>$ & $t^{3}utu$ & $t$& $2$&
			$24$ & $<16,11>$ & $<8,3>$ & $t,u^{2}$ & $1$& $2$\tabularnewline
			\hline
			$7$ & $<8,1>$ & $<4,1>$ & $t^{3}utu^{3}$ & $tu$& $1$&
			$25$ & $<16,13>$ & $<8,4>$ & $t^{3}utu^{3},t$ & $1$& $6$\tabularnewline
			\hline
			$8$ & $<8,3>$ & $<4,2>$ & $t^{2},u^{2}$ & $u^{2}t$& $1$&
			$26$ & $<16,13>$ & $<8,2>$ & $t,utu$ & $1$& $4$\tabularnewline
			\hline
			$9$ & $<8,2>$ & $<4,1>$ & $t$ & $1$& $1$&
			$27$ & $<16,11>$ & $<8,2>$ & $t,utu$ & $u^{2}$& $8$\tabularnewline
			\hline
			$10$ & $<8,5>$ & $<4,2>$ & $u^{2}t, t^{2}$ & $1$& $6$&
			$28$ & $<16,11>$ & $<8,3>$ & $t,u^{2}$ & $utu$& $2$\tabularnewline
			\hline
			$11$ & $<8,2>$ & $<4,1>$ & $t^{3}utu^{3}$ & $u^{2}t$& $1$&
			$29$ & $<16,7>$ & $<8,1>$ & $tu^{2}tut$ & $tutu^{2}$& $4$\tabularnewline
			\hline
			$12$ & $<8,3>$ & $<4,1>$ & $t ^{3}utu$ & $u^{2}t$& $4$&
			$30$ & $<16,11>$ & $<8,3>$ & $t^{3}utu^{3},u^{2}t$ & $1$& $2$\tabularnewline
			\hline
			$13$ & $<8,5>$ & $<4,2>$ & $u^{2}t,t^{2}$ & $t^{3}utu$& $6$&
			$31$ & $<32,43>$ & $<16,6>$ & $tu^{2}tut,u^{2}$ & $u$& $4$\tabularnewline
			\hline
			$14$ & $<8,3>$ & $<4,1>$ & $t$ & $t^{3}utu^{3}$& $2$&
			$32$ & $<32,11>$ & $<16,2>$ & $u,t^{3}ut$ & $u^{2}t$& $4$\tabularnewline
			\hline
			$15$ & $<8,2>$ & $<4,1>$ & $t$ & $t^{3}utu$& $1$&
			$33$ & $<32,43>$ & $<16,13>$ & $t^{3}utu,u^{2},t$ & $u$& $4$\tabularnewline
			\hline
			$16$ & $<8,3>$ & $<4,2>$ & $u^{2}t,t^{2}$ & $u^{2}$& $1$&
			$34$ & $<32,7>$ & $<16,6>$ & $tu^{2}tut,u^{2}$ & $1$& $4$\tabularnewline
			\hline
			$17$ & $<8,3>$ & $<4,2>$ & $u^{2}t,t^{2}$ & $t^{3}utu^{3}$& $1$&
			$35$ & $<32,49>$ & $<16,13>$ & $utu,u^{2},t$ & $1$& $12$\tabularnewline
			\hline
			$18$ & $<8,3>$ & $<4,1>$ & $t$ & $u^{2}$& $2$&
			$36$ & $<64,134>$ & $<32,11>$ & $t,u$ & $1$& $8$\tabularnewline
			\hline
		\end{tabular}
	\end{center}
\end{table}

\vspace{-3mm}

\begin{table}[H]
	\caption{Pairs (G,H) for non-diagonal twists}
	\label{GHnondiag}
	\begin{center}
		\begin{tabular}{|c|c|c|c|c|c|c|c|c|c|c|c|}
			\hline
			\multicolumn{12}{|c|}{Non-diagonal twists}\tabularnewline
			\hline
			\hline
			& $ \operatorname{ID}( G) $ \rule[0.4cm]{0cm}{0cm} & $ \operatorname{ID}(H)$ & $\operatorname{gen}(H)$ & $h$& $n_{(G,H)}$& & $ \operatorname{ID}( G) $ \rule[0.4cm]{0cm}{0cm} & $ \operatorname{ID}(H)$ & $\operatorname{gen}(H)$ & $h$& $n_{(G,H)}$\tabularnewline
			\hline
			$1$ & $<6,1>$ & $<3,1>$ & $s$ & $u^{2}t$&$1$&
			$6$ & $<48,48>$ & $<24,12>$ & $s,t$ & $1$&$1$\tabularnewline
			\hline
			$2$ & $<6,2>$ & $<3,1>$ & $s$ & $1$&$1$&
			$7$ & $<96,64>$ & $<48,3>$ & $s,u,t^{3}ut$ & $u^{2}t$&$2$\tabularnewline
			\hline
			$3$ & $<12,4>$ & $<6,1>$ & $s,u^{2}t$ & $1$&$1$&
			$8$ & $<96,72>$ & $<48,3>$ & $s,u,t^{3}ut$ & $1$&$4$\tabularnewline
			\hline
			$4$ & $<24,12>$ & $<12,3>$ & $s,u^{2}$ & $u^{2}t$&$1$&$9$ & $<192,956>$ & $<96,64>$ & $s,t,u$ & $1$&$2$\tabularnewline
			\hline
			$5$ & $<24,13>$ & $<12,3>$ & $s,u^{2}$ & $1$&$1$&\multicolumn{6}{|c|}{}\tabularnewline
			\hline
		\end{tabular}
	\end{center}
\end{table}

In the tables above the "$\text{gen}(H)$" and "$h$" columns serve to reconstruct $G$ and $H$.
They show generators of $H$, and a single matrix $h$. The meaning is that $ G$ is the group which elements are $(g,1)$ for $g$ in $H$ together with the elements $(gh,\tau)$ for $g$ in $H$ and $\tau$ the non-trivial automorphism in $\operatorname{Gal}(K/k)$. The number $n_{(G,H)}$ is the one in Remark \ref{R3} and introduced in \cite{Lor14}.

\clearpage

\end{document}